\newcommand{\rank}{\mathsf{rank}}
\newcommand{\ran}{\mathsf{Ran}}
\renewcommand{\Re}{\mathsf{Re}}
\DeclareMathOperator*{\argmin}{arg\,min ~}
\newcommand{\fro}[1]{\left\| #1\right\|^2}
\newcommand{\scal}[1]{\left \langle #1 \right \rangle}
\newcommand{\para}[1]{\left( #1\right)}
\newcommand{\R}{\mathbb{R}}
\newcommand{\C}{\mathbb{C}}
\newcommand{\h}{\mathbb{H}}
\newcommand{\m}{\mathbb{M}}
\newtheorem{theorem}{Theorem}[section]
\newtheorem*{theoremextra1}{Theorem \ref{vonN}}
\newtheorem{proposition}[theorem]{Proposition}
\newtheorem{corollary}[theorem]{Corollary}
\def\B {{\mathcal B }}
\newcommand{\V}{\mathcal{V}}
\newcommand{\W}{\mathcal{W}}
\newcommand{\U}{\mathcal{U}}
\newcommand{\I}{\mathscr{I}}
\newcommand{\J}{\mathscr{J}}
\newcommand{\N}{\mathbb{N}}
\renewcommand{\L}{\mathcal{L}}
\renewcommand{\S}{\mathcal{S}}
\newcommand{\M}{\mathcal{M}}
\renewcommand{\H}{\mathcal{H}}
\newcommand{\prox}{\mathsf{prox}}
\newtheorem{ex}[theorem] {Example}
\begin{document}
\title{On convexification/optimization of functionals including an $l^2$-misfit term}
%\author[Lund]{Fredrik Andersson\corref{cor1}}
%\ead{fa@maths.lth.se}
\author{Marcus Carlsson}
\ead{marcus.carlsson@math.lu.se}
%\cortext[cor1]{Corresponding author}
\address{Centre for Mathematical Sciences, Lund University, Box 118, 22100 Lund, Sweden}

\begin{keyword}
Fenchel conjugate \sep convex envelope \sep non-convex/non-smooth optimization
\MSC[2010] 49M20 \sep 65K10 \sep 90C26
\end{keyword}

\begin{abstract}
We provide theory for computing the lower semi-continuous convex envelope of functionals of the type \begin{equation}\label{pg}f(x)+\frac{1}{2}\|x-d\|^2,\end{equation} and discuss applications to various non-convex optimization problems. The latter term is a data fit term whereas $f$ provides structural constraints on $x$. By minimizing \eqref{pg}, possibly with additional constraints, we thus find a tradeoff between matching the measured data and enforcing a particular structure on $x$, such as sparsity or low rank. For these particular cases, the theory provides alternatives to convex relaxation techniques such as $\ell^1$-minimization (for vectors) and nuclear norm-minimization (for matrices). For functionals of the form $$f(x)+\frac{1}{2}\|Ax-d\|^2,$$ where the convex envelope usually is not explicitly computable, we provide theory for how minimizers of (explicitly computable) approximations of the convex envelope relate to minimizers of the original functional. In particular, we give explicit conditions on when the two coincide.
\end{abstract}

\maketitle
\section{Introduction}
\textcolor{red}{The article \cite{carlsson2018convex} is a condensed and improved version of this article, in which we denote $\S^2_\gamma$ by $\mathcal{Q}_\gamma$ and call it the quadratic envelope. This article still contains much more information, especially regarding computational aspects. We will not update notation/terminology in this article. }

The purpose of this article is to convexify, or partially convexify, functionals of the type \begin{equation}\label{t5} \|x\|_0+\frac{1}{2}\|Ax-d\|^2_2\end{equation} where $x\in\R^n$, and \begin{equation}\label{f667} \rank(X)+\frac{1}{2}\|X-D\|^2_F\end{equation} where $X\in \m_{m,n}$ (the space of $m\times n$-matrices with the Frobenius norm). In other words, we are interested in computing the lower semi-continuous (abbreviated l.s.c.~) convex envelope or at least an approximation thereof. We will also consider weighted norms and penalty terms like \begin{equation}\label{f6}f(X)=\left\{\begin{array}{cc}
                     0 & \rank (X)\leq K,\\
                     \infty & \text{ else.}
                   \end{array}
\right.
\end{equation}
in order to treat problems where a matrix of a fixed rank is sought. Such functionals appear in a multitude of optimization problems, where the goal is to find a point $x$ such that the functional attains its minimum, possibly with additional constraints on $x$. We refer to the overview article \cite{tseng2010approximation} which includes a long list of applications. The problem of minimizing \eqref{t5} and \eqref{f667} differ significantly in that \eqref{f667} has a closed form solution whereas solving \eqref{t5} is NP-hard. However, minimization of \eqref{f667} over a subspace or in combination with additional priors, is also a hard well-known problem with many applications, and knowing the l.s.c. convex envelope can help to find approximate solutions, as we advocate in this paper. We refer to \cite{larsson2016convex,recht2010guaranteed} and the references therein for examples of applications.

Since the functional \eqref{f6}, as well as $\|\cdot\|_0$ and $\rank(\cdot)$, are non-convex, it is tempting to replace them by their convex envelopes. However, in all three cases the convex envelope equals 0. To obtain problems that are efficiently solvable, it is therefore popular to replace e.g. $\|\cdot\|_0$ with the $\ell^1$-norm or $\rank(\cdot)$ by the nuclear norm, a strategy which is sometimes called convex relaxation, thus obtaining a convex problem reminiscent of the original problem. Such methods have a long history, but has received new attention in recent times due to the realization that the original problem and the convex relaxation under certain assumptions have the same solution, as pioneered in the work concerning compressed sensing \cite{donoho2006most,candes2006robust}. The argument behind the choice of convex replacement is often that the functionals in question are the convex envelopes of the original ones when restricted to the unit ball, see e.g. \cite{recht2010guaranteed}.

\begin{wrapfigure}{r}{0.5\textwidth}
  \begin{center}
     \includegraphics[width=.5\textwidth]{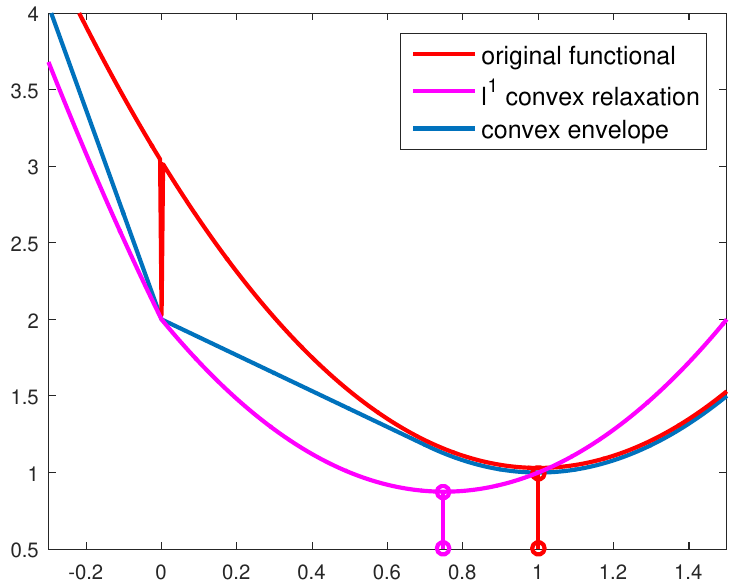}\\
  \end{center}
  \caption{Illustration of a non-convex, non-continuous functional together with its convex envelope and a ``traditional'' convex relaxation.}
  \label{figintro}
\end{wrapfigure}

Despite the success of these methods, there is a notable difference between the functional $\|x\|_0$ and $\|x\|_1$ for large values of $x$, which usually leads to a bias in the solution of the convex relaxation. A common misconception is that if certain Restricted Isometry conditions are fulfilled, then both problem have the same solution, but this is only in the case when there is no noise, i.e.~if $b$ in \eqref{t5} is of the form $Ax_0$ where $x_0$ is sparse. For a deeper discussion of these drawbacks we refer to \cite{ourselves}. To remedy the problem with bias, there has recently been two independent attempts at finding convexifications closer to the original functional, namely \cite{larsson2016convex} for minimizing \eqref{f667} (in combination with additional restrictions) and \cite{soubies2015continuous} for minimizing \eqref{t5} as is. In this paper we find a unifying framework and significantly extend the existing theory.

Figure \ref{figintro} highlights these issues in one variable; let $|x|_0$ the function equalling 1 on $\R\setminus\{0\}$ and zero at $x=0$. In red we see the functional $|x|_0+\frac{1}{2}|x-1|^2$ (which is a particular case of both \eqref{t5} and \eqref{f667} in dimension 1), in blue its convex envelope and in pink the convex relaxation $|x|+\frac{1}{2}|x-1|^2$. Clearly the global minimum of the red and blue coincide, but the global minimum of the convex relaxation is different.

We now outline the main contributions of this paper in greater detail. Consider any functional of the form
\begin{equation}\label{t553} f(x)+\frac{1}{2}\|x-d\|^2_\V\end{equation}
where $\V$ is an arbitrary separable Hilbert space and $f$ any non-negative functional on $\V$. We introduce a transform $\S_\gamma$, where $\gamma>0$ is a parameter, which is designed so that $\S_\gamma^2(f)(x)+\frac{\gamma}{2}\|x\|^2$ is the l.s.c.~convex envelope of $f(x)+\frac{\gamma}{2}\|x\|^2$, and show that the l.s.c.~convex envelope of the functional in \eqref{t553} is
\begin{equation}\label{t55}\S_1^2(f)(x)+\frac{1}{2}\|x-d\|^2.\end{equation} Values $\gamma\neq 1$ will mainly be of interest in Part III, and we simply write $\S$ in place of $\S_1$. Note that the shape of the convex envelope is completely independent of $d$. The functionals $\S(f)$ and $\S^2(f)$ are closely related to the Moreau-envelope, Lasry-Lions approximants or proximal hulls, which we elaborate more on in Section \ref{son}. In Section \ref{ex} we provide numerous examples of $\S^2(f)$ for various functionals acting on matrices as well as vectors. We also provide a number of general results to simplify the computation of $\S^2(f)$.

Section \ref{finer} considers finer properties of l.s.c.~convex envelopes. The computation of the l.s.c.~convex envelope of $f(x)+\frac{1}{2}\|x\|^2$ can be thought of as stretching plastic foil from below onto the graph of $f(x)+\frac{1}{2}\|x\|^2$ (see Figure \ref{fig2intro}). Consider a point $x$ where the plastic foil is not in contact with the graph, i.e. where $\S^2(f)(x)<f(x)$. It is intuitively obvious that the plastic foil, i.e. the graph of $\S^2(f)(x)+\frac{1}{2}\|x\|^2$, has some direction in which it is affine linear, and thus $\S^2(f)$ should have some direction in which the curvature is $-1$. This is surprisingly difficult to show, and it is the main result of Section \ref{finer} that the statement is true, which reads as follows.

\begin{theorem}\label{o9}
Let $f$ be a weakly l.s.c.~$[0,\infty]$-valued functional on a separable Hilbert space $\V$, and pick $x_0\in\V$. We then either have that $f(x_0)=\S^2(f)(x_0)$, or there exists a unit vector $\nu$ and $t_0>0$ such that the function $h(t)= \S^2(f)(x_0+t\nu)$ has second derivative $-1$ on $(-t_0,t_0)$.
\end{theorem}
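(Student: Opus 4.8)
The plan is to convert the statement into a fact about the l.s.c.~convex envelope and then to exhibit, at a non-contact point, a genuine two-sided segment on which the envelope is affine. By the result quoted above, $\S^2(f)(x)+\frac{1}{2}\|x\|^2$ is the l.s.c.~convex envelope of $g(x):=f(x)+\frac{1}{2}\|x\|^2$; write $G:=g^{**}$ for this envelope, so $G$ is proper, convex and weakly l.s.c., while $g$ is weakly l.s.c.~and coercive (it dominates $\frac{1}{2}\|\cdot\|^2$). Suppose $\S^2(f)(x_0)<f(x_0)$, equivalently $G(x_0)<g(x_0)$. It then suffices to produce a unit vector $\nu$ and $t_0>0$ with $t\mapsto G(x_0+t\nu)$ affine on $(-t_0,t_0)$: writing $h(t)=\S^2(f)(x_0+t\nu)=G(x_0+t\nu)-\frac{1}{2}\|x_0+t\nu\|^2$ and expanding $\frac{1}{2}\|x_0+t\nu\|^2=\frac{1}{2}\|x_0\|^2+t\scal{x_0,\nu}+\tfrac{t^2}{2}$ (using $\|\nu\|=1$) gives $h''\equiv-1$ on $(-t_0,t_0)$, which is exactly the claim.

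The engine is an integral (barycentric) representation of the envelope. Since $g$ is proper, weakly l.s.c.~and bounded below, I would use $G(x_0)=\inf\int_\V g\,d\mu$, the infimum being over probability measures $\mu$ on $\V$ whose barycenter is $x_0$, and --- this is the key analytic point --- that the infimum is \emph{attained} by some $\mu_0$. Granting attainment, $\mu_0\ne\delta_{x_0}$ (otherwise $\int g\,d\mu_0=g(x_0)>G(x_0)$), and optimality pins the support down: from $g\ge G$ and Jensen for barycenters, $\int g\,d\mu_0\ge\int G\,d\mu_0\ge G(x_0)=\int g\,d\mu_0$, so all inequalities are equalities, giving $\int G\,d\mu_0=G(x_0)$ and $g=G$ $\mu_0$-a.e.

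Next I split $\mu_0$ through $x_0$. Because $x_0$ is the barycenter, $\int\scal{e,z-x_0}\,d\mu_0(z)=0$ for every $e$; since $\mu_0\ne\delta_{x_0}$ and $\V$ is separable, there is a unit $e$ for which $z\mapsto\scal{e,z-x_0}$ is not $\mu_0$-a.e.~zero, hence is strictly positive and strictly negative on sets of positive measure. Normalising the restrictions of $\mu_0$ to $\{\scal{e,z-x_0}>0\}$ and $\{\scal{e,z-x_0}\le0\}$ gives probability measures $\mu_+,\mu_-$ with weights $\alpha,1-\alpha\in(0,1)$ and barycenters $b_+,b_-$ satisfying $\scal{e,b_+-x_0}>0\ge\scal{e,b_--x_0}$, so $b_+\ne b_-$ and $x_0\in(b_-,b_+)$. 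Now the chain
\[
\alpha G(b_+)+(1-\alpha)G(b_-)\ \ge\ G(x_0)\ =\ \int_\V G\,d\mu_0\ \ge\ \alpha G(b_+)+(1-\alpha)G(b_-)
\]
--- the first inequality by convexity at $x_0=\alpha b_++(1-\alpha)b_-$, the equality by the previous paragraph, the last by Jensen applied to $\mu_+$ and $\mu_-$ --- collapses to equalities. Hence $G$ equals its chord value at the interior point $x_0$ of $[b_-,b_+]$, which for a convex function forces $G$ to be affine on all of $[b_-,b_+]$. Taking $\nu:=(b_+-b_-)/\|b_+-b_-\|$ and $t_0$ small supplies the affine direction required in the first paragraph.

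Everything after attainment is soft; I expect the main obstacle to be precisely the \emph{attainment} of the barycentric infimum defining $G(x_0)$ in infinite dimensions. A minimizing sequence $\mu_n$ has uniformly bounded second moments, $\int\frac{1}{2}\|z\|^2\,d\mu_n\le\int g\,d\mu_n\to G(x_0)<\infty$, but on an infinite-dimensional Hilbert space this does not yield tightness in the norm topology, so Prokhorov's theorem does not apply directly. Overcoming this is where separability and the weak topology must enter --- balls being weakly compact, one hopes to extract a limiting measure whose barycenter is still $x_0$ and whose mass stays on the contact set $\{g=G\}$ by weak l.s.c.~of $g$ --- or else one reduces to the finite-dimensional envelope along an increasing family of coordinate projections and passes to the limit. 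This compactness/attainment issue is, I believe, the real content that makes the statement ``surprisingly difficult''.
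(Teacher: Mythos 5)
Your reduction to producing an affine segment of $G=g^{**}$ through $x_0$, and everything you do \emph{after} assuming an optimal measure $\mu_0$, is sound: the equalities $g=G$ $\mu_0$-a.e.\ and $\int G\,d\mu_0=G(x_0)$, the hyperplane splitting (which works because a countable dense family of directions shows some $\scal{e,\cdot-x_0}$ is not $\mu_0$-a.e.\ zero, and mean zero then forces both half-spaces to carry positive mass), the chord argument forcing $G$ to be affine on $[b_-,b_+]$ with $x_0$ interior, and the final subtraction of the quadratic giving $h''\equiv-1$. The genuine gap — which you acknowledge but do not close — is that both load-bearing claims of your ``engine'' are asserted rather than proved: (i) the representation $G(x_0)=\inf\{\int g\,d\mu:\ \mathrm{bar}(\mu)=x_0\}$ and (ii) attainment of this infimum. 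Point (i) is not a free textbook fact here: over finitely supported measures the infimum computes the convex hull $\operatorname{co}(g)$, which need not be l.s.c.\ and can be strictly larger than $g^{**}$ at some points — the paper's own example after Proposition \ref{pnew} (the indicator of $\{x_1>0,\ x_2=\sqrt{x_1}\}$) shows the hull of a nonnegative weakly l.s.c.\ function need not be closed — so equality with $g^{**}(x_0)$ genuinely uses the coercivity $g\ge\frac{1}{2}\|\cdot\|^2$ and requires essentially the same compactness analysis as (ii). For (ii) your diagnosis of the obstacle is accurate, and the route you gesture at can be made to work: the second-moment bound gives $\sup_n\mu_n(\{\|z\|>R\})\le C/R^{2}$, i.e.\ uniform tightness with respect to the \emph{weakly} compact balls, so a Prokhorov-type theorem for Radon measures on the completely regular space $(\V,\text{weak})$ yields a narrowly convergent subnet with limit $\mu_0$; one then needs the portmanteau inequality $\int g\,d\mu_0\le\liminf\int g\,d\mu_n$ for the weakly l.s.c.\ integrand $g\ge 0$, and uniform integrability of $z\mapsto\scal{e,z}$ (again from the moment bound) to pass the barycenter to the limit — and one should run this limiting argument along points $x_k\to x_0$ realizing $\operatorname{co}(g)(x_k)\to g^{**}(x_0)$ so as to get (i) and (ii) simultaneously. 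None of this is carried out, and it is precisely the ``surprisingly difficult'' content of the theorem, so the proposal as written is incomplete at its decisive step.

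For comparison, the paper takes a different and much shorter route: it invokes Br\o ndsted's 1966 extension of Milman's theorem (Theorems \ref{brondsted} and \ref{arne}). There one only checks that $g^{**}$ is coercive, so its level sets, being closed, bounded and convex, are weakly compact; Br\o ndsted's inclusion $[g^{**}_{ext}]\subset[g]$ then says that any point with $g^{**}(x_0)<g(x_0)$ is non-extremal for $g^{**}$, i.e.\ $g^{**}$ is affine on a relatively open segment through $x_0$, and subtracting $\frac{1}{2}\|x_0+t\nu\|^2$ finishes exactly as in your first paragraph. Your Choquet-style argument, once (i) and (ii) are supplied, would yield a self-contained alternative proof — your splitting step in effect re-proves the relevant extremality dichotomy — but the compactness work it demands is essentially what Br\o ndsted's theorem packages.
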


This implies that the l.s.c.~convex envelope of \eqref{t553} at each point either touches the original functional, or has a direction in which it is locally affine. Despite the wealth of results on l.s.c.~convex envelopes, this result seems to be new, although in the review process it has been brought to my attention that in the finite dimensional case, the statement is shown in the PhD-thesis \cite{lucet}. In either case, the proof given here is a simple extension of  a theorem due to Arne Br\o ndsted \cite{brondsted1966milman} in a short notice from 1966, which seems to have remained unnoticed by the community.

Semi-algebraicity of the $\S_\gamma-$transform is considered in Section \ref{semi}, since it was shown in \cite{attouch2013convergence} that this is sufficient for the forward backward splitting method to converge in the non-convex setting. This concludes the first part of the paper, titled ``general theory''.

\begin{figure}
\centering
\includegraphics[width=0.8\linewidth]{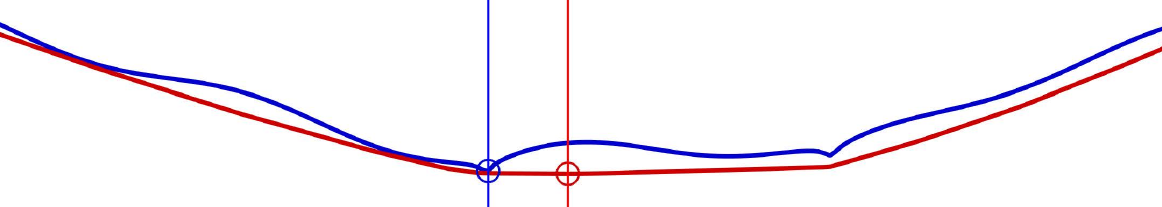}\\
\includegraphics[width=0.48\linewidth,height=5cm]{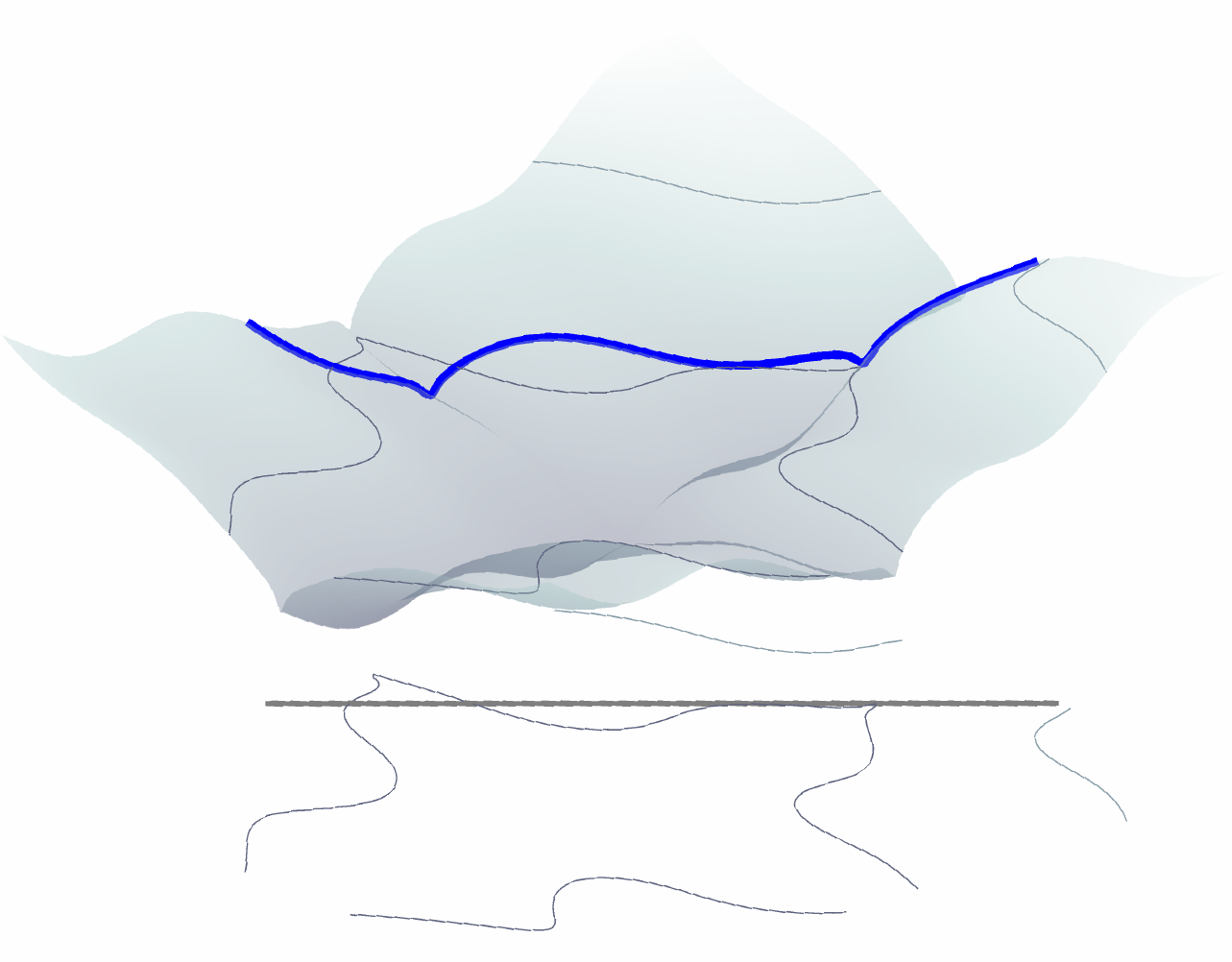}
\includegraphics[width=0.48\linewidth,height=5cm]{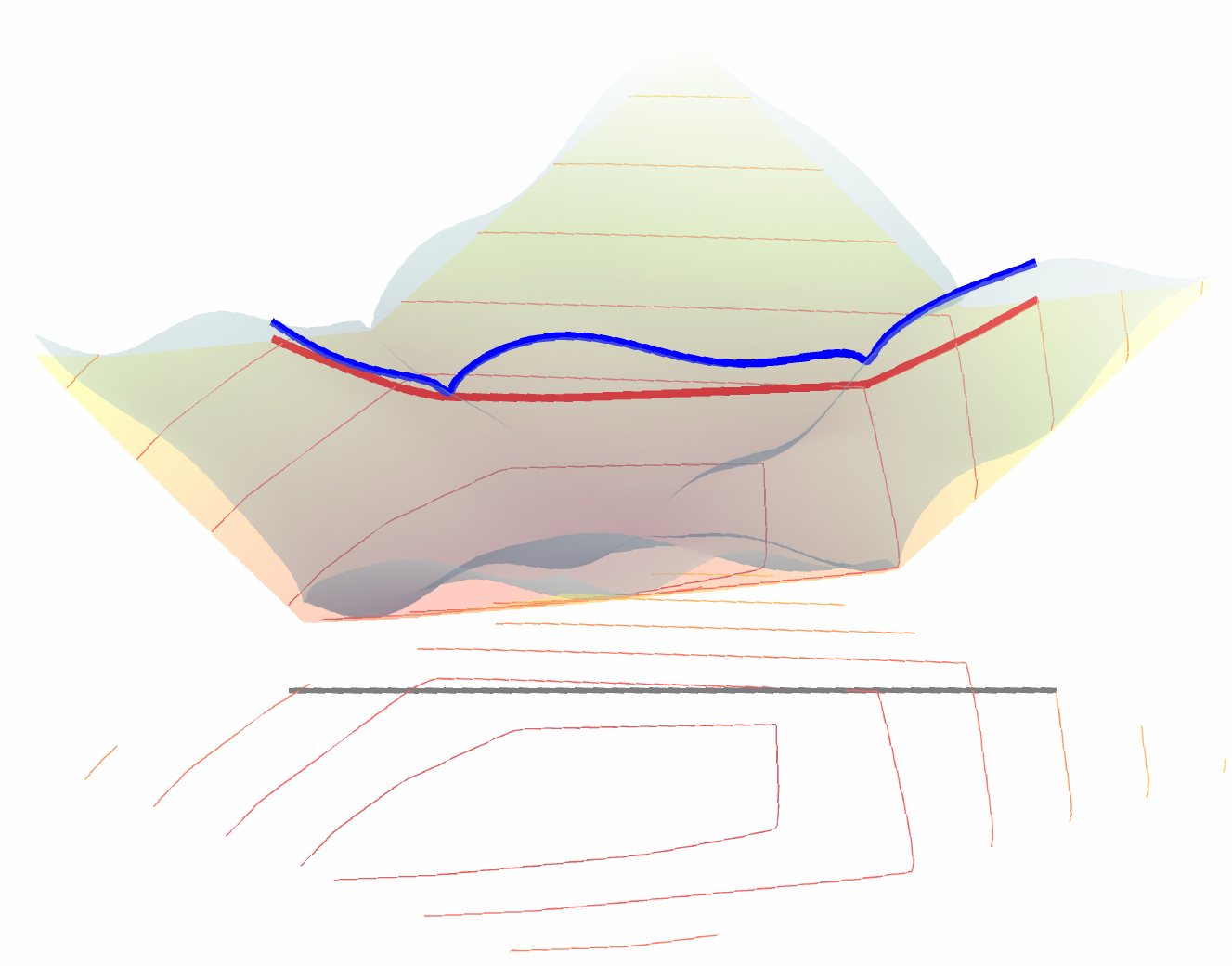}
\caption{Illustration of a non-convex optimization problem with linear constraints. The bottom left panel shows a non-convex functional along with its level sets. The gray line represents the subspace we are interested in, and the blue curve the values of the functional restricted to the subspace. The bottom right panel shows the same setup, but here the convex envelope is shown as well in orange/yellow. The values of the convex envelope over the subspace is shown in the red curve. The top figure shows a one-dimensional plot of the values of the original functional and the convex envelope evaluated on the subspace. The respective minima are shown by circles and highlighted by the vertical lines.}
\label{fig2intro}
\end{figure}

The remainder of the paper is divided in two parts corresponding to the prototype functionals \eqref{t5} and \eqref{f667}. These are rather different, and to explain why note that $\S(f)$ can be computed explicitly only if the global minimum of the original functional \eqref{t553} can be found explicitly, as in the case of \eqref{f667} but not \eqref{t5}. Therefore, the problem of minimizing \eqref{f667} only becomes difficult in combination with additional restrictions. Suppose e.g. that we want to minimize \eqref{t553} over some subspace $\M\subset\V$ or say that we wish to minimize $f(x)+\frac{1}{2}\|x-d\|^2+c(x)$ where $c$ is a convex functional related to any additional prior information, (see Section 4 in \cite{larsson2016convex} for concrete examples). In both cases we end up with minimization problems with no closed form solution. Replacing $f$ with $\S^2(f)$ then gives us a convex problem, similar to the original one, which can be addressed with standard convex approximation schemes like the projected subgradient method, dual ascent, ADMM or forward-backward splitting. It is often the case that the minimum of the ``convexified'' problem coincides with the minimum of the non-convex problem, which is easily verified by simply checking if $f(x)=\S^2(f)(x)$ holds at the point of convergence. It is important however to realize that this is not always the case, as Figure \ref{fig2intro} demonstrates. However, Figure \ref{fig3} in Section \ref{secap1} shows the same functional with a different subspace on which the two minima does coincide. We elaborate further on this in Section \ref{secap1}.  It is not the aim of the present paper to provide recommendations for which algorithm to use to solve a specific application, and the best candidate will certainly depend on the particular situation. Nevertheless, several of the algorithms mentioned above requires the ability to compute the so called proximal operator, and we provide theory for this in Section \ref{prox}, which concludes Part II of the paper, titled ``applications with additional priors''.

Part III of the paper is devoted to the problem of minimizing
\begin{equation}\label{t55312} f(x)+\frac{1}{2}\|Ax-d\|^2_{\V}\end{equation} where $A$ is any linear transformation. We assume that $\V$ is such that $\S^2_\gamma(f)$ is computable, but due to the linear transformation $A$, the functional
\begin{equation}\label{t55312c} \S_\gamma^2(f)(x)+\frac{1}{2}\|Ax-d\|^2,\quad \gamma=1,\end{equation} will not equal the convex envelope of \eqref{t55312}, which we assume is untractable, as in the case of \eqref{t5}. The parameter $\gamma$ now becomes a valuable tool as it tunes the curvature of $\S_\gamma^2(f)$. The expression \eqref{t55312c} is illustrated (in one dimension and for values of $|A|^2>\gamma$ (left) and $|A|^2<\gamma$ (right)) in Figure \ref{fig3intro}. The circles represent global minima of the respective functions.

Generalizing the left figure, we assume in Section \ref{secunderestimate} that $\gamma$ is below the square of the lowest singular value of $A$. We prove that the functional \eqref{t55312c} is a convex functional below \eqref{t55312}, and hence minimization of \eqref{t55312c} will produce a minimizer which, although not necessarily equal to the minimizer of the original problem, likely is closer than that obtained by other convex relaxation methods (if such at all are available). Moreover, the minimizer of the original and modified problem do coincide whenever $f(x)=\S_\gamma^2(f)(x)$, which often is easily checked in practice. An example of when this happens, similar to Figure \ref{fig3intro}, is shown in Figure \ref{fig1} in Section \ref{secunderestimate}.

\begin{figure}
\centering
\includegraphics[width=0.48\linewidth]{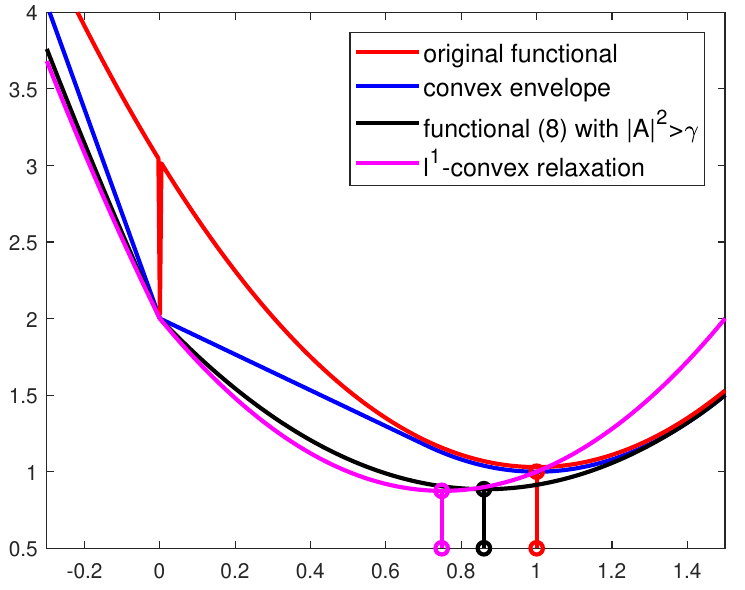}
\includegraphics[width=0.48\linewidth]{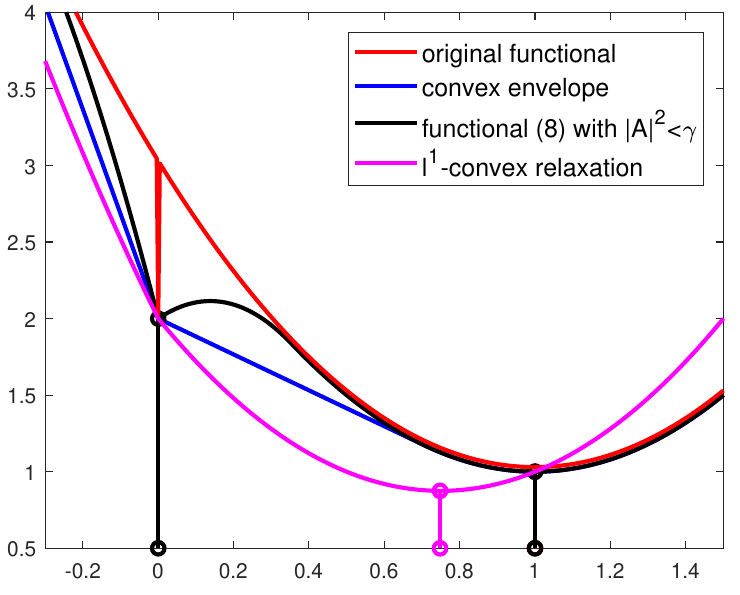}
\caption{The same setup as in Figure \ref{figintro}, but with an additional functional in black illustrating \eqref{t55312c} in case $A^*A\geq \gamma I$ (left) and $A^*A\leq \gamma I$ (right). See Section \ref{secex2} for a more detailed description.}
\label{fig3intro}
\end{figure}

For the problem \eqref{t5}, $A$ is usually a matrix with a large kernel, and the smallest singular value is 0, which rules out the above approach. In Section \ref{secoverestimate} we consider the case $\gamma>\|A\|^2$, generalizing the situation in the right picture of Figure \ref{fig3intro}. We can then show that \eqref{t55312c} is a continuous (but not everywhere convex) functional with the following desirable properties; $i)$ \eqref{t55312c} lies between \eqref{t55312} and its l.s.c. convex envelope, $ii)$ any local minimizer of \eqref{t55312c} is a local minimizer of \eqref{t55312}, $iii)$ the global minimizers of \eqref{t55312c} and \eqref{t55312} coincide (see Proposition \ref{pover} and Theorem \ref{tover1}). We remark that, despite not being convex, critical points of \eqref{t55312c} can be found using e.g. the forward-backward splitting method \cite{attouch2013convergence,bolte2014proximal}. The situation in Section \ref{secoverestimate} is thus drastically different from the previous scenarios; whether a global minimizer of the original problem is found depends only on the starting point for the algorithm seeking local minimizer. This latter part of the paper is inspired by \cite{soubies2015continuous}, which considers problem \eqref{t5}, and also contains a list of recent algorithms for finding local minima of functionals of the type considered above. A separate study of the methods developed here to this particular problem is also found in our recent contribution \cite{ourselves}.

%The paper also contains two appendices of some independent interest. Appendix I revisits an extension of Milman's theorem, giving structural properties of l.s.c. convex envelopes, due to A. Brondsted in a short notice from 1966 \cite{brondsted1966milman}, which seems to have remained unnoticed by the community. Appendix II extends the famous von Neumann's trace inequality to operators on infinite dimensional spaces, a result which rather surprisingly has not been published to our best knowledge.

\section*{Notation}
The set of $m\times n$ complex matrices, equipped with the Frobenius norm, is denoted $\m_{m,n}$. Throughout the paper, $\V$ and sometimes $\W$ denote separable Hilbert spaces (possibly finite dimensional). Let $\mathcal{B}_2(\V,\W)$ denote all Hilbert-Schmidt operators with the Hilbert-Schmidt norm. We remark that in case $\V=\C^n$ and $\W=\C^m$ with the canonical norms, then $\mathcal{B}_2(\V,\W)$ is readily identified with $\m_{m,n}$ with the Frobenius norm. The singular value decomposition (SVD) of a given $A\in\m_{m,n}$ is denoted $A=U\Sigma V^*$, where we choose  $V\in\m_{n,n}$, $\Sigma\in \m_{n,n}$ and $U\in\m_{m,n}$. The vector of singular values (i.e. the elements on the diagonal of $\Sigma$) is then denoted by $\sigma.$ Note that we thus define the singular values such that the amount of singular values equals the dimension of $\V$. More generally, given any operator $A$ acting on infinite dimensional spaces, we can pick singular vectors $(u_{j})_{j=1}^\infty$ and $(v_{j})_{j=1}^\infty$ such that \begin{equation}\label{SVD}A=\sum_{j=1}^\infty\sigma_j(A)u_{j}\otimes v_{j}\end{equation}
where $\sigma_j(A)$ are the singular values (ordered decreasingly) and $u_{j}\otimes v_{j}(x)=u_{j}\scal{x, v_{j}}$. Moreover $(u_{j})_{j=1}^\infty$ can be taken to be an orthonormal sequence in $\W$ and $(v_{j})_{j=1}^\infty$ to be an orthonormal basis in $\V$ (see e.g. Theorem 1.4 \cite{simon1979trace}). We follow the matrix theory custom of numbering the singular vectors starting at 1, as opposed to 0 which is more common in operator theory.

$\h (\V)$ will denote the subspace of $\B_2(\V,\V)$ of self-adjoint (Hermitian) operators, and $\lambda(X)$ the vector of eigenvalues of a given $X\in \h (\V)$. In case $\V$ has finite dimension $n$, so that $\B_2(\V,\V)$ is identified with $\m_{n,n}$, we simply write $\h_{n}$.

$\R^{d}$ for $d=\infty$ is identified with $\ell^2(\N)$. Given $x\in\R^d$, $\|x\|_0$ denotes the amount of non-zero elements (by abuse of notation since this is not a norm), and $\|x\|_2$ the canonical norm. We abbreviate lower semi-continuous by l.s.c., and we denote by $\mathsf{dom}(f)$ the set of points where the functional $f$ is finite. Both $CE(f)$ and $f^{**}$ will denote the l.s.c convex envelope of a functional $f$.

$\S_{\V,\gamma}$ is the $\S$-transform computed with the scalar product of $\V$ and parameter $\gamma$. Usually $\V$ is omitted from the notation and furthermore when $\gamma=1$ we simply write $\S$.

\section{Part I; general theory.}
\subsection{The $\S$-transform}\label{son}

Let $\V$ be a separable Hilbert space over $\R$ or $\C$, such as $\C^n$ with the canonical norm $\|x\|_2^2=\sum_{j=1}^n |x_j|^2$ or $\m_{m,n}$, equipped with the Frobenius norm which we denote $\|X\|_F$. All Hilbert spaces over $\C$ are also Hilbert spaces over $\R$ with the scalar product $ \scal{ x,y}_\R=\Re \scal{ x,y}$, and hence it is no restriction to assume that $\V$ is a real Hilbert space wherever needed. Even if $\V$ is a Hilbert space over $\C$, we will implicitly assume that the scalar product is $\scal{ x,y}_\R$.

Given any functional $g:\V\rightarrow \R\cup\{\infty\}$ the Legendre transform (or Fenchel conjugate) is defined as  \begin{equation}\label{Legendre}\L(g)(y)=g^*(y):=\sup_x  \scal{ x,y}-g(x).\end{equation}
We remind the reader that $g^*$ is l.s.c convex and that $g^{**}$ equals the l.s.c.~convex envelope of $g$, by the Fenchel-Moreau theorem (see e.g.~Proposition 13.11 and 13.39 in \cite{bauschke2011convex}).
Given a parameter $\gamma>0$, we now introduce the transform $\S_\gamma$ defined as follows: \begin{equation}\label{defS}\S_\gamma(f)(y):=\L \para{f(\cdot)+\frac{\gamma}{2}\|\cdot\|^2}(\gamma y)-\frac{\gamma}{2}\|y\|^2=\sup_x -f(x)-\frac{\gamma}{2}\fro{x-y}.\end{equation}
We denote $\S_\gamma\circ\S_\gamma$ by $\S_\gamma^2$. \textcolor{red}{The above formula had an error in previous versions. The article \cite{carlsson2018convex} is a condensed and improved version of this article, in which we denote $\S^2_\gamma$ by $\mathcal{Q}_\gamma$ and call it the quadratic envelope. We will not update notation/terminology in this article.} Note the direct formula \begin{equation}\label{lasry1}\S_{\gamma}^2(f)(x)=\sup_y\para{\inf_w f(w)+\frac{\gamma}{2}\fro{w-y}}-\frac{\gamma}{2}\fro{x-y}, \end{equation} so $\S_\gamma^2(f)$ can also be seen as an inf-convolution followed by a sup-convolution with $\pm\frac{\gamma}{2}\|\cdot\|^2$.
The parameter $\gamma$ basically tunes the maximum negative curvature of $\S_\gamma^2(f)$, which we will show in Section \ref{finer} (Theorem \ref{propconcave}). When $\gamma=1$ we simply write $\S$ as opposed to $\S_\gamma$.

It is clear from the second line of \eqref{defS} that $\S(f)$ is simply the negative of the famous Moreau-envelope. However, the double Moreau-envelope does not equal $\S^2(f)$, and is not connected with convex envelopes. We do have \begin{equation}\label{lasry}\begin{aligned}&\S_{1/s}\S_{1/t}(f)(x)=-\para{\inf_y-\para{\inf_w f(w)+\frac{1}{2t}\fro{w-y}}+\frac{1}{2s}\fro{x-y}}=\\& =\sup_y\para{\inf_w f(w)+\frac{1}{2t}\fro{w-y}}-\frac{1}{2s}\fro{x-y}, \end{aligned}\end{equation}
which, for parameters $s<t$, is called the Lasry-Lions approximation of $f$ \cite{lasry1986remark}, which has been studied in the context of regularization of non-convex functionals. For $s=t$ it is also called the proximal hull in \cite{rockafellar2009variational} (see Example 1.44), and it is also studied in Section 6 of \cite{stromberg1996regularization} (with the notation $C(1)f$), mainly with focus on differentiability-results. It is also closely connected to the more general ``proximal average'', see e.g.~\cite{bauschke2008proximal,hare2009proximal}. However, it seems that the connection with convex envelopes has not been systematically studied, which is the main aim of this publication. The next proposition contains some basic observations on the behavior of $\S_\gamma$, and Theorem \ref{t1} contains the connection with l.s.c.~convex envelopes of $f(x)+\frac{\gamma}{2}\|x-d\|^2.$

\begin{proposition}\label{propprop}
Let $f$ be a $[0,\infty]$-valued l.s.c.~functional on a separable Hilbert space $\V$ and $\gamma>0$. Then $\S_\gamma(f)$ takes values in $(-\infty,0]$ and is continuous, whereas $\S_\gamma^2(f)$ is lower semi-continuous, takes values in $[0,\infty]$ and is continuous in the interior of $\mathsf{dom}(\S_\gamma^2(f))$.
\end{proposition}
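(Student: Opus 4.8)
The plan is to reduce every assertion to an elementary sign estimate on the defining supremum together with one genuinely nontrivial fact: a finite, convex, lower semicontinuous function on a Banach space is continuous. Throughout I assume $f\not\equiv\infty$ (otherwise $\S_\gamma(f)\equiv-\infty$ and the claim about its range is vacuous), so that $\mathsf{dom}(f)\neq\emptyset$. For the range of $\S_\gamma(f)$, read off from the second equality in \eqref{defS} that each term $-f(x)-\frac{\gamma}{2}\fro{x-y}$ is $\leq 0$, since $f\geq 0$; hence $\S_\gamma(f)(y)\leq 0$. Conversely, fixing any $x_0\in\mathsf{dom}(f)$ gives $\S_\gamma(f)(y)\geq -f(x_0)-\frac{\gamma}{2}\fro{x_0-y}>-\infty$, so $\S_\gamma(f)$ is $(-\infty,0]$-valued.

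For continuity of $\S_\gamma(f)$, I would exploit the convex structure recorded in \eqref{y6}. Set $\Psi:=\L\para{f(\cdot)+\frac{\gamma}{2}\fro{\cdot}}$, so that $\S_\gamma(f)(y)=\Psi(y)-\frac{\gamma}{2}\fro{y}$. As a Legendre transform, $\Psi$ is convex and l.s.c., and the range bound just obtained shows $0\leq\Psi(y)\leq\frac{\gamma}{2}\fro{y}<\infty$, so $\Psi$ is finite on all of $\V$. The point is then that such a $\Psi$ is continuous: the sublevel sets $\{\Psi\leq n\}$, $n\in\N$, are closed by lower semicontinuity and cover $\V$, so by the Baire category theorem one of them has nonempty interior, i.e.\ $\Psi$ is bounded above on some ball; convexity then propagates this local boundedness to local Lipschitz continuity on all of $\V$. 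Subtracting the continuous map $\frac{\gamma}{2}\fro{\cdot}$ yields continuity of $\S_\gamma(f)$.

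For $\S_\gamma^2(f)=\S_\gamma(\S_\gamma(f))$ I argue analogously. Lower semicontinuity is immediate from \eqref{defS}: since $\S_\gamma(f)$ is now real-valued, $\S_\gamma^2(f)$ is a supremum over $y$ of the continuous functions $x\mapsto -\S_\gamma(f)(y)-\frac{\gamma}{2}\fro{x-y}$, and a supremum of continuous functions is l.s.c. Taking $y=x$ in that supremum and using $\S_\gamma(f)\leq 0$ gives $\S_\gamma^2(f)(x)\geq -\S_\gamma(f)(x)\geq 0$, so $\S_\gamma^2(f)$ is $[0,\infty]$-valued. For continuity on the interior of its domain I invoke the remark following \eqref{y6}: $\Phi:=\S_\gamma^2(f)(\cdot)+\frac{\gamma}{2}\fro{\cdot}$ is the l.s.c.\ convex envelope of $f(\cdot)+\frac{\gamma}{2}\fro{\cdot}$, hence a \emph{proper} l.s.c.\ convex function (proper since $f$ is, and nonnegative since $f\geq 0$). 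Because the quadratic term is finite everywhere, $\mathsf{dom}(\Phi)=\mathsf{dom}(\S_\gamma^2(f))$; running the Baire argument above on a closed ball contained in $\mathrm{int}\,\mathsf{dom}(\Phi)$ shows $\Phi$ is continuous there, and subtracting $\frac{\gamma}{2}\fro{\cdot}$ gives continuity of $\S_\gamma^2(f)$ on $\mathrm{int}\,\mathsf{dom}(\S_\gamma^2(f))$.

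The sign estimates and the lower-semicontinuity claims are formal; the only real obstacle is the continuity of a finite l.s.c.\ convex function in the possibly infinite-dimensional setting, where—unlike in $\R^n$—finiteness alone does not force continuity. This is precisely where lower semicontinuity is needed, feeding the Baire category argument above; alternatively one may cite the corresponding statement in \cite{bauschke2011convex}.
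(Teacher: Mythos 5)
Your proof is correct and takes essentially the same route as the paper: the sign bounds and avoidance of $-\infty$ are read off the supremum formula in \eqref{defS}, and the continuity statements come from the decomposition \eqref{y6} of $\S_\gamma(f)$ and $\S_\gamma^2(f)$ as an l.s.c.~convex functional minus a quadratic, combined with continuity of (finite, resp.~proper) l.s.c.~convex functions on the interior of their domain --- which the paper simply cites as Corollary 8.30 of \cite{bauschke2011convex}, while you reprove it via the Baire category argument. One inessential slip: the claimed lower bound $0\leq\Psi$ is false in general (e.g.~$f\equiv 1$ gives $\Psi(y)=\frac{1}{2\gamma}\|y\|^2-1<0$ near the origin), but your argument only uses the finiteness $-\infty<\Psi(y)\leq\frac{\gamma}{2}\|y\|^2$, which you have already established, so nothing breaks.
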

\begin{proof}
The statement of the interchanging signs follows easily by the last line of \eqref{defS}, which also shows that $\S_\gamma(f)$ avoids $-\infty$. By \eqref{defS} it also follows that $\S_\gamma(f)$ (and $\S_\gamma^2(f)$) is the difference of an l.s.c.~convex functional and a quadratic term. With this in mind the continuity statements follows by standard properties of l.s.c.~convex functionals (see e.g.~Corollary 8.30 \cite{bauschke2011convex}). \end{proof}

The following result is the key result of this section, connecting the $\S_\gamma$-transform with l.s.c.~convex envelopes.

\begin{theorem}\label{t1}
Let $f$ be a $[0,\infty]$-valued functional on a separable Hilbert space $\V$. Then
$$\L\para{f(x)+\frac{\gamma}{2}\|x-d\|^2}(y)=\S_\gamma(f)\para{\frac{y}{\gamma}+d}+\frac{\gamma}{2}\fro{\frac{y}{\gamma}+d}-\frac{\gamma}{2}\|d\|^2$$
and
$$\L\para{\S_\gamma(f)\para{\frac{y}{\gamma}+d}+\frac{\gamma}{2}\fro{\frac{y}{\gamma}+d}-\frac{\gamma}{2}\|d\|^2}(x)=\S_\gamma^2(f)(x)+\frac{\gamma}{2}\|x-d\|^2.$$
In particular, $\S_\gamma^2(f)(x)+\frac{\gamma}{2}\|x-d\|^2$ is the l.s.c.~convex envelope of $f(x)+\frac{\gamma}{2}\|x-d\|^2$ and $0\leq \S_\gamma^2(f)\leq f$.
\end{theorem}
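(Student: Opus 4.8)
The plan is to prove the two Legendre-transform identities directly from the definition \eqref{Legendre}, and then to extract the convex-envelope conclusion by a single application of the Fenchel--Moreau theorem. The two displayed identities are really just statements about how the Legendre transform interacts with the completed square $\frac{\gamma}{2}\|x-d\|^2$, so I would treat them as a computation rather than as a conceptual obstacle. For the first identity, I would start from $\L\para{f(\cdot)+\frac{\gamma}{2}\|\cdot-d\|^2}(y)=\sup_x\para{\scal{x,y}-f(x)-\frac{\gamma}{2}\fro{x-d}}$ and complete the square in $x$. Writing $\frac{\gamma}{2}\fro{x-d}=\frac{\gamma}{2}\fro{x}-\gamma\Re\scal{x,d}+\frac{\gamma}{2}\|d\|^2$, the linear terms $\scal{x,y}+\gamma\Re\scal{x,d}$ combine into $\gamma\Re\scal{x,\frac{y}{\gamma}+d}$, which lets me recognize the supremum as $\S_\gamma(f)$ evaluated at the shifted point $\frac{y}{\gamma}+d$, up to the quadratic corrections appearing in the claimed formula. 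The matching of these corrective $\frac{\gamma}{2}\fro{\cdot}$ and $-\frac{\gamma}{2}\|d\|^2$ terms is the only bookkeeping step, and it falls out of the same completion of the square.

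For the second identity the cleanest route is to avoid redoing the optimization from scratch and instead invoke the involutive behavior built into the $\S_\gamma$-transform. Since $\S_\gamma^2(f)(x)+\frac{\gamma}{2}\|x\|^2$ is the Legendre transform of $\S_\gamma(f)(y)+\frac{\gamma}{2}\fro{y}$ by \eqref{y6} applied twice, I would recognize the function being transformed in the second identity as a $d$-shifted version of $\S_\gamma(f)(\cdot)+\frac{\gamma}{2}\fro{\cdot}$, and then apply exactly the same completion-of-the-square computation as in the first identity, now running in the opposite direction. Concretely, the first identity already establishes the general rule that conjugating $g(x)+\frac{\gamma}{2}\fro{x-d}$ produces a shift by $d$ together with the two quadratic corrections; applying this rule with $g=\S_\gamma^2(f)$ and reading \eqref{y6} backwards should reproduce the right-hand side. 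This reuse is what makes the second identity a corollary of the first rather than an independent calculation.

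The convex-envelope conclusion then follows immediately. By \eqref{y6} and the Fenchel--Moreau theorem, $\S_\gamma^2(f)(x)+\frac{\gamma}{2}\fro{x}$ is l.s.c.~convex, and the second identity exhibits $\S_\gamma^2(f)(x)+\frac{\gamma}{2}\fro{x-d}$ as the biconjugate $\para{f(\cdot)+\frac{\gamma}{2}\fro{\cdot-d}}^{**}$; since biconjugation yields exactly the l.s.c.~convex envelope, this is the claim. The bound $\S_\gamma^2(f)\le f$ is then a consequence of the envelope being a minorant: the convex envelope of $f(x)+\frac{\gamma}{2}\fro{x-d}$ lies below $f(x)+\frac{\gamma}{2}\fro{x-d}$ pointwise, and subtracting the common quadratic term leaves $\S_\gamma^2(f)(x)\le f(x)$. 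The lower bound $0\le\S_\gamma^2(f)$ is inherited from Proposition \ref{propprop}, which already records that $\S_\gamma^2(f)$ is $[0,\infty]$-valued.

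The one point requiring a little care is that the chain of Fenchel--Moreau applications must be legitimate for a possibly non-convex, merely $[0,\infty]$-valued $f$; in particular I want $f(\cdot)+\frac{\gamma}{2}\fro{\cdot-d}$ to be proper so that the biconjugate genuinely equals its l.s.c.~convex envelope rather than the constant $-\infty$. This is harmless here because $f\ge 0$ forces the functional to be bounded below and nowhere $-\infty$, and it is not identically $+\infty$ in the relevant cases, so the theorem applies. I would therefore expect the main obstacle to be purely notational rather than mathematical: keeping the shift $\frac{y}{\gamma}+d$ and the two quadratic correction terms correctly aligned through both completions of the square, especially reconciling the real-part convention $\scal{x,y}_\R=\Re\scal{x,y}$ when $\V$ is complex.
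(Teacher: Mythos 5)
Your proposal is correct and takes essentially the same route as the paper: the first identity by completing the square inside the supremum, the second by the same computation (the paper does it directly with the substitution $z=\frac{y}{\gamma}+d$, while you repackage it via \eqref{y6} — equivalent, provided you track the $y/\gamma$ rescaling, since \eqref{y6} as literally written is exact only for $\gamma=1$), and then Fenchel--Moreau for the envelope statement, with $\S_\gamma^2(f)\le f$ obtained by subtracting the common quadratic from the minorant inequality and $0\le\S_\gamma^2(f)$ cited from Proposition \ref{propprop}, exactly as in the paper's proof. Your added caution about properness is sound but not needed beyond what the paper implicitly assumes.
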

\begin{proof}
We have
\begin{align*}
&\L\para{f(x)+\frac{\gamma}{2}\|x-d\|^2}(y)=\sup_x \scal{ x,y}-f(x)-\frac{\gamma}{2}\|x-d\|^2=\\&=\sup_x -f(x)-\frac{\gamma}{2}\fro{x-\para{\frac{y}{\gamma}+d}}+\frac{\gamma}{2}\fro{\frac{y}{\gamma}+d}-\frac{\gamma}{2}\|d\|^2
\end{align*}
from which the first identity follows. Similarly
\begin{align*}
&\L\para{\S_\gamma(f)\para{\frac{y}{\gamma}+d}+\frac{\gamma}{2}\fro{\frac{y}{\gamma}+d}-\frac{\gamma}{2}\|d\|^2}(x)\\&=\sup_y  \scal{ x,y}-\S_\gamma(f)\para{\frac{y}{\gamma}+d}-\frac{\gamma}{2}\fro{\frac{y}{\gamma}+d}+\frac{\gamma}{2}\|d\|^2=\\&
=\sup_y  -\S_\gamma(f)\para{\frac{y}{\gamma}+d}-\frac{\gamma}{2}\fro{\frac{y}{\gamma}+d-x}+\frac{\gamma}{2}\|x-d\|^2=\S_\gamma^2(f)(x)+\frac{\gamma}{2}\|x-d\|^2.
\end{align*}
The statement about the convex envelope follows by the Fenchel-Moreau theorem, which also gives $\S_\gamma^2(f)(x)+\frac{\gamma}{2}\|x-d\|^2\leq f(x)+\frac{\gamma}{2}\|x-d\|^2$. This implies the latter part of the inequality $0\leq \S_\gamma^2(f)\leq f$, whereas the former has already been noticed in Proposition \ref{propprop}.
\end{proof}

The above theorem can also be applied to expressions of the form \begin{equation}f(x)+\frac{1}{2}\fro{A x-d}_2,\quad x\in \C^n\end{equation} upon renormalizing $\V$ using $A$, but we postpone the theory for this case to Part III, in particular Proposition \ref{pl0}. Finer properties of the $\S_\gamma$-transform are discussed in Section \ref{finer}. In the coming section we make a long list of computable $\S_\gamma$-transforms as well as provide general tools to compute such.
We end this section with some observations about the behavior of $\S_\gamma^2(f)$ as a function of $\gamma$.

\begin{proposition}\label{pnew}
Let $f$ be a l.s.c.~$[0,\infty]$-valued functional. Then $\S_\gamma^2(f)(x)$ is increasing as a function of $\gamma$. Moreover, \begin{equation}\label{ggamma1}\lim_{\gamma\rightarrow \infty}\S_\gamma^2(f)(x)=f(x)\end{equation}
whereas the limit as ${\gamma\rightarrow 0^+}$ equals a convex minimizer of $f$ above the convex envelope of $f$.
\end{proposition}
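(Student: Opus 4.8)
The plan is to read everything off from the characterisation in Theorem \ref{t1}, namely that $\S_\gamma^2(f)(x)+\frac{\gamma}{2}\|x\|^2=CE\para{f+\frac{\gamma}{2}\|\cdot\|^2}(x)$, together with the sandwich $0\le\S_\gamma^2(f)\le f$, and to treat the three assertions separately.

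For the monotonicity I would first record the variational description that, for each $\gamma>0$, the functional $\S_\gamma^2(f)$ is the largest l.s.c.\ functional $g$ satisfying $g\le f$ and having the property that $g+\frac{\gamma}{2}\|\cdot\|^2$ is convex. Indeed $\S_\gamma^2(f)$ itself is of this form by Proposition \ref{propprop} and Theorem \ref{t1}; conversely, any such $g$ makes $g+\frac{\gamma}{2}\|\cdot\|^2$ an l.s.c.\ convex minorant of $f+\frac{\gamma}{2}\|\cdot\|^2$, hence it is dominated by the envelope $\S_\gamma^2(f)+\frac{\gamma}{2}\|\cdot\|^2$. Now if $\gamma_1<\gamma_2$ and $g$ is admissible for $\gamma_1$, then $g+\frac{\gamma_2}{2}\|\cdot\|^2=\para{g+\frac{\gamma_1}{2}\|\cdot\|^2}+\frac{\gamma_2-\gamma_1}{2}\|\cdot\|^2$ is again convex, so $g$ is admissible for $\gamma_2$; taking the supremum over the larger admissible class gives $\S_{\gamma_1}^2(f)\le\S_{\gamma_2}^2(f)$.

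For the limit $\gamma\to\infty$, I would use the direct formula \eqref{lasry1}: inserting $y=x$ yields $\S_\gamma^2(f)(x)\ge\inf_w\para{f(w)+\frac{\gamma}{2}\fro{w-x}}=-\S_\gamma(f)(x)$, the negative Moreau envelope. A direct argument then shows this quantity increases to $f(x)$: the choice $w=x$ gives the upper bound $f(x)$, while for prescribed $\varepsilon>0$ one splits the infimum over $\|w-x\|<\delta$, where lower semicontinuity keeps $f(w)\ge f(x)-\varepsilon$, and over $\|w-x\|\ge\delta$, where $f\ge0$ forces the candidate value to be at least $\frac{\gamma}{2}\delta^2\to\infty$. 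Combined with $\S_\gamma^2(f)\le f$ this pins $\lim_{\gamma\to\infty}\S_\gamma^2(f)(x)=f(x)$.

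The limit $\gamma\to 0^+$ is where the work lies. By monotonicity the pointwise limit $h_0(x):=\inf_{\gamma>0}\S_\gamma^2(f)(x)$ exists, and the sandwich gives $h_0\le f$ at once; moreover $h_0\ge CE(f)$, since $CE(f)+\frac{\gamma}{2}\|\cdot\|^2$ is an l.s.c.\ convex minorant of $f+\frac{\gamma}{2}\|\cdot\|^2$ and is therefore dominated by $\S_\gamma^2(f)+\frac{\gamma}{2}\|\cdot\|^2$ for every $\gamma$. The main obstacle is convexity of $h_0$: the approximants $\S_\gamma^2(f)$ are themselves non-convex, so one cannot invoke ``a decreasing limit of convex functions is convex''. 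I would instead exploit the weak convexity built into Theorem \ref{t1}, i.e.\ that $\S_\gamma^2(f)+\frac{\gamma}{2}\|\cdot\|^2$ is convex. Applying this at $z=tx+(1-t)y$ and using the elementary identity $t\|x\|^2+(1-t)\|y\|^2-\|z\|^2=t(1-t)\|x-y\|^2$ gives
$$\S_\gamma^2(f)(z)\le t\,\S_\gamma^2(f)(x)+(1-t)\,\S_\gamma^2(f)(y)+\frac{\gamma}{2}t(1-t)\|x-y\|^2,$$
and letting $\gamma\to 0^+$ the quadratic defect disappears, leaving $h_0(z)\le t\,h_0(x)+(1-t)\,h_0(y)$. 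Thus $h_0$ is a convex minorant of $f$ lying above $CE(f)$, as claimed. I would be careful not to assert that $h_0$ is l.s.c.: if it were, then being a convex minorant of $f$ it would satisfy $h_0\le CE(f)$ and hence collapse to the envelope, whereas in general it may sit strictly above it.
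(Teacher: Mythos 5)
Your proof is correct, and its overall skeleton matches the paper's: the monotonicity argument is essentially identical (the paper directly observes that $\S_{\gamma_2}^2(f)+\frac{\gamma_1}{2}\fro{\cdot}$ is an l.s.c.~convex minorant of $f+\frac{\gamma_1}{2}\fro{\cdot}$ and hence lies below the envelope $\S_{\gamma_1}^2(f)+\frac{\gamma_1}{2}\fro{\cdot}$; your ``largest admissible $g$'' characterization is the same argument packaged as a supremum), and the sandwich $CE(f)\leq h_0\leq f$ for $\gamma\to 0^+$ is obtained the same way. You deviate in two places. For $\gamma\to\infty$, the paper fixes $\xi<f(d)$, uses lower semicontinuity and $f\geq 0$ to make the constant $\xi$ a convex minorant of $f+\frac{\gamma}{2}\fro{\cdot-d}$ for large $\gamma$, and reads off $\S_\gamma^2(f)(d)\geq\xi$ from Theorem \ref{t1}; you instead bound $\S_\gamma^2(f)(x)\geq -\S_\gamma(f)(x)$ via \eqref{lasry1} at $y=x$ and prove Moreau-envelope convergence by hand --- the same near/far splitting with the quadratic dominating far away, so the two routes are close cousins (minor caveat: your phrase ``$f(w)\geq f(x)-\varepsilon$'' tacitly assumes $f(x)<\infty$; when $f(x)=\infty$ replace $f(x)-\varepsilon$ by an arbitrary $M$, which l.s.c.~still provides). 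For the convexity of the limit, your worry that one ``cannot invoke'' the decreasing-limit lemma is misplaced: the paper applies precisely that lemma (Proposition 8.16 in \cite{bauschke2011convex}), not to the non-convex $\S_\gamma^2(f)$ but to the convex functions $CE(f+\frac{\gamma}{2}\fro{\cdot})=\S_\gamma^2(f)+\frac{\gamma}{2}\fro{\cdot}$, which decrease with $\gamma$ and have the same pointwise limit $h_0$ since the quadratic term vanishes pointwise. Your semiconvexity inequality with defect $\frac{\gamma}{2}t(1-t)\fro{x-y}$ is a correct hand-made substitute for that citation, and your closing observation that $h_0$ need not be l.s.c.~(else it would collapse onto $CE(f)$) is exactly consistent with the paper's $\iota_Q$ counterexample stated after the proposition.
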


We remark that $\lim_{\gamma\rightarrow 0^+}\S_\gamma^2(f)$ equals the l.s.c. convex envelope of $f$ for all the examples in Section \ref{ex}, but this is not necessarily the case in general, which is a surprise at least for the author. To see this, consider $P=\{x\in\R^2:~x_1>0,~x_2=\sqrt{x_1}\}$, $Q=\{x\in\R^2:~x_1>0,~0<x_2\leq\sqrt{x_1}\}\cup\{0\}$ and $f=\iota_{P}$, where $\iota_P$ is the indicator functional of $P$. It is easy to see that the l.s.c. convex envelope of $\iota_P$ equals $\iota_{cl(Q)}$ (where $cl$ denotes closure), whereas some thinking reveals that $\lim_{\gamma\rightarrow 0^+}\S_\gamma^2(f)=\iota_Q$. However, if e.g. $\V$ is finite dimensional and $\lim_{\gamma\rightarrow 0^+}\S_\gamma^2(f)$ is everywhere finite, then it is automatically continuous (Corollary 8.30 in \cite{bauschke2011convex}) and hence it must equal the l.s.c.~convex envelope of $f$.

%We remark that for all of the examples in Section \ref{ex}, $CE(f)$ is identically equal to 0.

\begin{proof}
If $\gamma_1>\gamma_2$ then $\S_{\gamma_2}^2(f)(x)+\frac{\gamma_1}{2}\|x\|^2$ equals the l.s.c.~convex functional $\S_{\gamma_2}^2(f)(x)+\frac{\gamma_2}{2}\|x\|^2$ plus the term $\frac{\gamma_1-\gamma_2}{2}\|x\|^2$, so it is l.s.c.~and convex. In view of $\S_{\gamma_2}^2(f)\leq f$ it also lies below $f+\frac{\gamma_1}{2}\|x\|^2$, and so we conclude that
$$\S_{\gamma_2}^2(f)(x)+\frac{\gamma_1}{2}\|x\|^2\leq CE(f+\frac{\gamma_1}{2}\|x\|^2)= \S_{\gamma_1}^2(f)(x)+\frac{\gamma_1}{2}\|x\|^2,$$ where $CE$ denotes the l.s.c.~convex envelope. The first claim follows.
For \eqref{ggamma1}, it suffices to show that $\lim_{\gamma\rightarrow \infty}\S_{\gamma}^2(f)(d)=f(d)$ for all $d\in\V$. To this end, let $\xi<f(d)$ be arbitrary. Since $f$ is l.s.c.~the set $\{x:f(x)>\xi\}$ is open and, as $f\geq 0$, it follows that we can pick $\gamma$ such that $$\xi-\frac{\gamma}{2}\|x-d\|^2\leq f(x),\quad x\in\V.$$ Thus the functional identically equal to $\xi$ is a l.s.c.~convex function below $f(x)+\frac{\gamma}{2}\|x-d\|^2$, and hence its l.s.c.~convex envelope is bigger than $\xi$. By Theorem \ref{t1} (evaluated at $x=d$), we conclude $\S_{\gamma}^2(f)(d)\geq \xi$, from which the desired result follows.

%To prove \eqref{ggamma2} note that $CE(f)+\frac{\gamma}{2}\|\cdot\|^2$ is a l.s.c.~convex function below $f(\cdot)+\frac{\gamma}{2}\|\cdot\|^2$, which implies that $\S_\gamma^2(f)\geq CE(f)$. By the first part of this proposition we also have that $\lim_{\gamma\rightarrow 0^+}\S_\gamma^2(f)(x)$ exists. To arrive at a contradiction, assume it is strictly greater than $CE(f)(x)$, and let $\xi$ be an intermediate value.

Concerning the limit as ${\gamma\rightarrow 0^+}$, set $g(x)=\lim_{\gamma\rightarrow 0^+}\S_\gamma^2(f)(x)$ which exist by the first part of this proposition. Since $$g(x)=\lim_{\gamma\rightarrow 0^+}\S_\gamma^2(f)(x)=\lim_{\gamma\rightarrow 0^+}\S_\gamma^2(f)(x)+\frac{\gamma}{2}\|x\|^2=\lim_{\gamma\rightarrow 0^+}CE(f+\frac{\gamma}{2}\|\cdot\|^2)(x)\geq CE(f)$$
we see that $g$ is the limit of a decreasing sequence of convex functions, hence it is also convex (Proposition 8.16 \cite{bauschke2011convex}) and clearly $g\leq f$ by Theorem \ref{t1}.
\end{proof}

\subsection{Examples of $\S$-transforms}\label{ex}

For practical purposes, Theorem \ref{t1} is only useful if $\S_\gamma^2$ has an explicit expression. This section contains a number of results that simplifies the computation of $\S_\gamma$-transforms, as well as numerous examples. The list of computable $\S$-transforms is by no means exhaustive and this section can be skipped by readers interested in other applications or theoretical aspects of the $\S$-transform, treated in later sections.

We begin by studying the functional $\|x\|_0$ on $\R$, which for clarity of notation we denote $|x|_0$, i.e. the function which is 0 at 0 and 1 elsewhere (see the red graph in Figure \ref{figbasic}). This seemingly innocent functional is relevant for both key problems \eqref{t5} and \eqref{f667}, which follows by noting that \begin{equation}\label{l0}\|x\|_0=\sum_{j=1}^n |x_j|_0\end{equation} and \begin{equation}\label{rank0}\rank(X)=\|\sigma(X)\|_0=\sum_{j=1}^n |\sigma_j(X)|_0.\end{equation}

\begin{figure}
\centering
\includegraphics[width=0.9\linewidth,height=5cm]{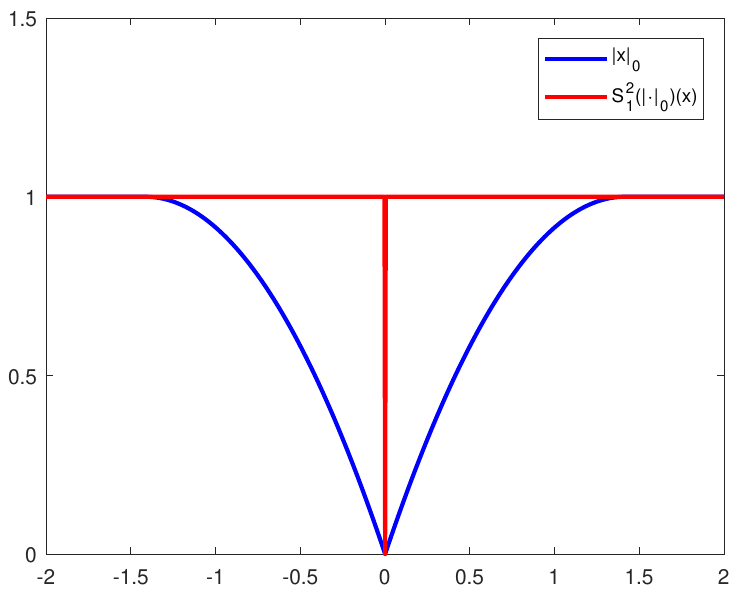}
\caption{Illustration of $|\cdot|_0$ (red) along with $\S_1^2(|\cdot|_0)$.}
\label{figbasic}
\end{figure}

\begin{ex}\label{ex1}
Let $\V=\R$ and consider $f(x)=\mu |x|_0$ where $\mu>0$ is a fixed parameter. Then
\begin{equation}\label{12}\S_\gamma(\mu |\cdot|_0)(y)=\sup_x -\mu |x|_0-\frac{\gamma}{2}(x-y)^2.\end{equation}
Clearly, the maximum is found either at $x=0$ or at $x=y$ which gives
\begin{equation}\label{14}\S_\gamma(\mu |\cdot|_0)(y)=\sup \{-\frac{\gamma y^2}{2},-\mu\}=-\min \{\frac{\gamma y^2}{2},\mu\}.\end{equation}
%\begin{figure}
%\centering
%\includegraphics[width=0.48\linewidth, trim=0.0cm 0.0cm 0.0cm 0.0cm]{ex31fo.png}
%\includegraphics[width=0.48\linewidth, trim=0.0cm 0.0cm 0.0cm 0.0cm]{ex31foo.png}
%\caption{\label{ex1fig} Illustrations of how the functions $\S(f)$ and $\S^2(f)$ are constructed, with the left panel showing $\S(f)$ and the right panel showing $\S^2(f)$, along with the corresponding contour-plots.}
%\end{figure}
To compute $\S_\gamma^2(\mu |\cdot|_0)$, we repeat the process
\begin{align*}&\S_\gamma^2(\mu |\cdot|_0)(x)=\sup_{y}-(-\min \{\frac{\gamma y^2}{2},\mu\})- \frac{\gamma}{2} (x-y)^2=\sup_{y}\min \{\frac{\gamma y^2}{2},\mu\}- \frac{\gamma}{2} (x-y)^2
\end{align*}
Since $\min \{\frac{\gamma y^2}{2},\mu\}$ is constantly equal to its supremum value $\mu$ whenever $|y|\geq \sqrt{2\mu/\gamma}$, it follows that the maximum is attained at $y=x$ for all $x$ satisfying $|x|\geq \sqrt{2\mu/\gamma}$, which yields $\S_\gamma^2(\mu |\cdot|_0)(x)=\mu$. For the same reason the maximum is attained in $[-\sqrt{2\mu/\gamma},\sqrt{2\mu/\gamma}]$ whenever $|x|< \sqrt{2\mu/\gamma}$. Since the $y^2$-terms cancel in this segment, the functional to be maximized is linear there, and so the maximum must be obtained at $y=\pm\sqrt{2\mu/\gamma}$. It easily follows that
\begin{equation}\label{15}\S_\gamma^2(\mu |\cdot|_0)(x)=\mu-\frac{\gamma}{2}(|x|-\sqrt{2\mu/\gamma})^2\chi_{[-\sqrt{2\mu/\gamma},\sqrt{2\mu/\gamma}]}(x)=\mu-\para{\max\{\sqrt{\mu}-
\frac{\sqrt{\gamma} |x|}{\sqrt{2}},0\}}^2\end{equation}
where $\chi_S$ denotes the characteristic functional of a set $S$.
\end{ex}

The expression \eqref{15} has appeared e.g.~in \cite{larsson2016convex,soubies2015continuous}. The point here is that it allows us to compute the $\S$-transform of the more complicated cost functionals \eqref{l0} and \eqref{rank0}, when combined with the below propositions. We refer to Ch.~I.6 in \cite{conway2013course} for the basics of direct products of separable Hilbert spaces. We write $\S_\gamma=\S_{\V,\gamma}$ if there is a need to clarify which space is used to compute the transform.
\begin{proposition}\label{p1}
Let $(\V_j)_{j=1}^d$ where $d\in\N\cup\{\infty\}$ be separable Hilbert spaces and set $\V=\oplus_{j=1}^d \V_j$. Suppose that $f_j$ are $[0,\infty]$-valued functionals on $\V_j$ and set $F(x)=\sum_{j=1}^d f_j(x_j)$ where $x=\oplus_{j=1}^d x_j$ and $x_j\in\V_j$. Then $$\S_\gamma(F)(y)=\sum_{j=1}^d \S_{\V_j,\gamma}f_j(y_j).$$
\end{proposition}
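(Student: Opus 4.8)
The plan is to unfold the definition of $\S_\gamma$ in \eqref{defS} and exploit the fact that both the norm and the functional $F$ decompose as sums over the direct-sum components, so that the supremum defining the transform splits into a product (or rather a sum) of independent one-component suprema. Concretely, for $y = \oplus_{j=1}^d y_j$ I would start from
\begin{equation}
\S_\gamma(F)(y) = \sup_{x} -F(x) - \frac{\gamma}{2}\fro{x-y},
\end{equation}
where the supremum is over all $x = \oplus_{j=1}^d x_j \in \V$. The two key structural facts are that $F(x) = \sum_{j=1}^d f_j(x_j)$ by hypothesis, and that the Hilbert-space norm on the direct sum satisfies $\fro{x-y} = \sum_{j=1}^d \fro{x_j - y_j}_{\V_j}$, which is the defining property of $\oplus_{j=1}^d \V_j$ (see Ch.~I.6 in \cite{conway2013course}). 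Substituting both gives an objective that is a sum of terms each depending only on the single variable $x_j$.

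The heart of the argument is the interchange of the supremum over the joint variable $x$ with the sum over $j$: since the objective separates as $\sum_{j=1}^d \para{-f_j(x_j) - \frac{\gamma}{2}\fro{x_j-y_j}_{\V_j}}$ and the constraint set is the unrestricted product $\prod_{j} \V_j$ (the components $x_j$ range independently), the supremum of the sum equals the sum of the suprema,
\begin{equation}
\sup_{x} \sum_{j=1}^d \para{-f_j(x_j) - \frac{\gamma}{2}\fro{x_j-y_j}_{\V_j}} = \sum_{j=1}^d \sup_{x_j} \para{-f_j(x_j) - \frac{\gamma}{2}\fro{x_j-y_j}_{\V_j}} = \sum_{j=1}^d \S_{\V_j,\gamma}(f_j)(y_j),
\end{equation}
where the final equality is again just the definition \eqref{defS} applied in each $\V_j$. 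This completes the identity claimed in the statement.

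The main obstacle, and the only step requiring genuine care, is the case $d=\infty$: here one must justify that the supremum of a countable sum of componentwise suprema is well behaved. Two points need attention. First, each inner supremum $\S_{\V_j,\gamma}(f_j)(y_j)$ is finite and in fact lies in $(-\infty,0]$ by Proposition \ref{propprop} (taking $x_j = y_j$ gives a value $\geq -f_j(y_j)$, and the $f_j \geq 0$ bound caps it at $0$), so the termwise suprema are individually controlled. Second, to split an infinite supremum over $\ell^2(\N)$-type sequences into an infinite sum of suprema one cannot in general interchange $\sup$ and $\sum$ for sign-indefinite terms; however, since every summand $-f_j(x_j) - \frac{\gamma}{2}\fro{x_j-y_j}_{\V_j}$ is $\leq 0$, the sum is a supremum of nonpositive terms and the interchange is legitimate by monotone approximation (take finite truncations, each of which separates exactly, and pass to the limit). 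I would therefore phrase the infinite-dimensional step as an increasing limit over finite index sets $\{1,\dots,N\}$ and invoke that the admissible $x$ with only finitely many nonzero components are dense enough to achieve the total supremum, using nonpositivity to control the tail. For $d$ finite the interchange is immediate and no limiting argument is needed.
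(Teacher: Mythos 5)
Your overall route is exactly the paper's: unfold \eqref{defS}, use $\fro{x-y}=\sum_{j}\fro{x_j-y_j}_{\V_j}$ on the direct sum, and interchange the supremum with the sum, the finite-$d$ case being immediate. The gap is in your treatment of $d=\infty$ --- the one step the paper itself flags as delicate (it offers either ``a short basic proof'' or Rockafellar's interchange theorem). Your proposed mechanism, namely approximating the supremum by vectors with only finitely many nonzero components and ``using nonpositivity to control the tail'', fails: nonpositivity bounds the tail only from above, not from below, and the tail can equal $-\infty$ along \emph{every} finitely supported (or $y$-truncated) vector. Concretely, take $\V_j=\R$, $y=0$, and $f_j=\iota_{\{b_j\}}$ the indicator functional of the singleton $\{b_j\}$, with $b_j\neq 0$ and $\sum_j b_j^2<\infty$. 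Then every finitely supported $x$ has $F(x)=\infty$, so the objective is $-\infty$ on all your truncations, yet
\begin{equation}
\S_\gamma(F)(0)=-\frac{\gamma}{2}\sum_{j=1}^\infty b_j^2=\sum_{j=1}^\infty \S_{\V_j,\gamma}(f_j)(0)
\end{equation}
is finite, attained only along the genuinely infinitely supported choice $x=(b_j)_j\in\V$. So finitely supported vectors are \emph{not} ``dense enough to achieve the total supremum''.

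The repair is a full componentwise selection rather than truncation. Write $g_j(x_j)=-f_j(x_j)-\frac{\gamma}{2}\fro{x_j-y_j}_{\V_j}$ and $s_j=\S_{\V_j,\gamma}(f_j)(y_j)\leq 0$. The inequality $\S_\gamma(F)(y)\leq S:=\sum_j s_j$ is trivial termwise, so if $S=-\infty$ there is nothing more to prove. If $S>-\infty$, pick for each $j$ an $x_j^\epsilon$ with $g_j(x_j^\epsilon)\geq s_j-\epsilon 2^{-j}$; since $f_j\geq 0$ this forces $\frac{\gamma}{2}\fro{x_j^\epsilon-y_j}_{\V_j}\leq |s_j|+\epsilon 2^{-j}$, whence $\sum_j\fro{x_j^\epsilon-y_j}_{\V_j}\leq\frac{2}{\gamma}\para{|S|+\epsilon}<\infty$ and $x^\epsilon=(x_j^\epsilon)_j$ lies in $\V$ because $y$ does; then $-F(x^\epsilon)-\frac{\gamma}{2}\fro{x^\epsilon-y}\geq S-\epsilon$. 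It is precisely the quadratic penalty, combined with $f_j\geq 0$, that guarantees square-summability of the near-optimal selection --- this is the ``short basic proof'' the paper alludes to, and the alternative is to cite Rockafellar's interchange theorem as the paper does. With this substitution for your truncation step, your proof is correct and coincides with the paper's.
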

\begin{proof} We have that
\begin{align*}&\S_\gamma(F)(y)=\sup_x -F(x)-\frac{\gamma}{2}\fro{x-y}=\sup_x\sum_{j=1}^d -f_j(x_j)-\frac{\gamma}{2}\fro{x_j-y_j}_{\V_j}=\sum_{j=1}^d \S_{\V_j,\gamma} f_j(y_j).\end{align*} If $d=\infty$ the interchange of sum and supremum is a bit delicate, but can be verified either by a short basic proof or using Rockafellar's interchange theorem \cite{rockafellar2009variational}.
\end{proof}

Combining this with Example \ref{ex1} we immediately get
\begin{equation}\label{l0000}\S^2(\|x\|_0)=\sum_{j=1}^d 1-\para{\max\{1-\frac{ |x|}{\sqrt{2}},0\}}^2, \quad x\in\R^d.\end{equation}
%We say that a functional $f$ on $\R^d$ is sign-invariant if $f(x_1,\ldots,x_d)=f(|x_1|,\ldots,|x_d|)$.
To derive a similar expression for \eqref{rank0}, we need von Neumann's trace inequality for operators on separable Hilbert spaces. We thus shift focus to functionals acting on the singular values of a matrix or, more generally, a Hilbert-Schmidt operator $X\in\B_2(\V_1,\V_2)$, (see e.g.~\cite{simon1979trace}). Set $d=\dim\V_1$ and note that the singular values of $X$ lies in the set $\R^{d}$ (see the Notation section), which we identify with $\ell^2(\N)$ in case $d=\infty$. The inequality then reads as follows:
\begin{theorem}\label{vonN}
Let $\V_1,~\V_2$ be any separable Hilbert spaces, let $X,Y\in \B_2(\V_1,\V_2)$ be arbitrary and denote their singular values by $\sigma_j(X)$, $\sigma_j(Y)$, respectively. Then $$\scal{X,Y}_{\B_2}\leq \sum_{j=1}^d \sigma_j(X)\sigma_j(Y)$$
with equality if and only if the singular vectors can be chosen identically.
\end{theorem}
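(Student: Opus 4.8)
The plan is to prove von Neumann's trace inequality $\scal{X,Y}_{\B_2}\leq \sum_{j=1}^d \sigma_j(X)\sigma_j(Y)$ by reducing the infinite-dimensional Hilbert-Schmidt case to the classical finite-dimensional statement, and then handling the finite-dimensional core via an Abel summation (summation-by-parts) argument applied to the singular value decompositions. First I would fix SVDs $X=\sum_i \sigma_i(X)\, u_i\otimes v_i$ and $Y=\sum_k \sigma_k(Y)\, p_k\otimes q_k$ as in \eqref{SVD}, with $(u_i),(p_k)$ orthonormal in $\V_2$ and $(v_i),(q_k)$ orthonormal in $\V_1$. Expanding the Hilbert-Schmidt inner product gives
\begin{equation}
\scal{X,Y}_{\B_2}=\sum_{i,k}\sigma_i(X)\sigma_k(Y)\,\scal{u_i,p_k}\overline{\scal{v_i,q_k}},
\end{equation}
so that, after taking real parts (recall the convention $\scal{\cdot,\cdot}=\scal{\cdot,\cdot}_\R$), the quantity of interest is a bilinear form in the singular values with coefficients $\rho_{ik}=\Re\!\para{\scal{u_i,p_k}\overline{\scal{v_i,q_k}}}$.

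The key structural fact is that the coefficient matrix $(\rho_{ik})$ is \emph{doubly substochastic}: by Cauchy-Schwarz and Bessel's inequality one has $\sum_i |\scal{u_i,p_k}|^2\leq 1$, $\sum_k|\scal{v_i,q_k}|^2\leq 1$, and similarly with the roles reversed, which yields $\sum_i \rho_{ik}\leq 1$ and $\sum_k \rho_{ik}\leq 1$ (and $\rho_{ik}\geq 0$ is \emph{not} guaranteed, so a little care is needed — one bounds $|\rho_{ik}|\leq \tfrac12(|\scal{u_i,p_k}|^2+|\scal{v_i,q_k}|^2)$ via AM-GM to recover substochasticity of the absolute values). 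Assuming the singular values are ordered decreasingly, I would then run Abel summation: writing the target difference as a telescoping sum over the ``staircase'' partial sums $\sum_{i\leq m,k\leq m}(\delta_{ik}-\rho_{ik})$ and using that these partial sums are nonnegative because of substochasticity, one concludes $\sum_{ik}\rho_{ik}\sigma_i(X)\sigma_k(Y)\leq \sum_j \sigma_j(X)\sigma_j(Y)$. This is the standard linear-algebra heart of von Neumann's inequality.

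For the infinite-dimensional reduction I would truncate: let $X_N,Y_N$ be the operators obtained by keeping only the top $N$ singular values. Since $X,Y$ are Hilbert-Schmidt, $X_N\to X$ and $Y_N\to Y$ in the Hilbert-Schmidt norm, so $\scal{X_N,Y_N}_{\B_2}\to\scal{X,Y}_{\B_2}$, while the finite-rank inequality gives $\scal{X_N,Y_N}_{\B_2}\leq\sum_{j=1}^N\sigma_j(X)\sigma_j(Y)$; letting $N\to\infty$ yields the claim, with the right-hand side converging since $(\sigma_j(X)),(\sigma_j(Y))\in\ell^2$. The main obstacle, and the part deserving the most care, is the \emph{equality case}: I would show that equality forces $\rho_{ik}=\delta_{ik}$ on the block of indices where both singular values are positive (this is where the substochasticity inequalities must be tight), and then argue that $\scal{u_i,p_k}\overline{\scal{v_i,q_k}}=\delta_{ik}$ together with the normalization constraints forces the singular vectors to be alignable — i.e. one may re-choose the singular-vector pairs so that they coincide on the positive-singular-value subspace. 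Handling degeneracies (repeated singular values, and the freedom in choosing singular vectors within eigenspaces) is the delicate point; the forward direction, that coinciding singular vectors produce equality, is an immediate computation.
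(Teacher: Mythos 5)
Your route is genuinely different from the paper's. The paper proves nothing in finite dimensions: it explicitly takes the matrix case --- including the equality part, for which it cites the literature --- as given, and its entire content is the reduction. For the inequality this is the same truncation you propose (plus dominated convergence), but for the equality case the paper uses an affine deformation: it replaces a block of equal singular values of $X$ by a parameter $s$, observes that $\scal{X(s),Y}$ and $\sum_j\sigma_j\big(X(s)\big)\sigma_j(Y)$ are both affine in $s$ on the interval between the neighbouring singular values, dominated one by the other, and equal at an interior point, hence identical on the interval; bootstrapping $s$ down to $0$ yields finite-rank truncations still achieving equality, to which the cited finite-dimensional equality case applies, contradicting the assumption that $X$ and $Y$ share no singular vectors. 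You instead prove the finite-dimensional core itself: the coefficient matrix $\rho_{ik}=\Re\para{\scal{u_i,p_k}\overline{\scal{v_i,q_k}}}$, the AM--GM plus Bessel bound (which indeed works for orthonormal sequences, not just bases, so no completion issues arise), and summation by parts. Two remarks there: the double Abel summation requires the rectangular block bound $\sum_{i\leq m,k\leq l}|\rho_{ik}|\leq \min(m,l)$, not only the square blocks you mention; and since Bessel and the $\ell^2$-summability of singular values hold in infinite dimensions, your substochastic argument actually proves the inequality directly, making your truncation step redundant. This self-containedness is a genuine gain over the paper's black-box citation of the matrix case.

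The gap is in your equality case. The claim that equality forces $\rho_{ik}=\delta_{ik}$ on the indices with positive singular values is false whenever singular values repeat: take $X=Y=I$ on $\C^2$, with the SVD of $X$ in the standard basis and the SVD of $Y$ in a rotated orthonormal basis; equality holds, yet $\rho_{ik}=|\scal{e_i,p_k}|^2$ is a nontrivial doubly stochastic matrix. What tightness of your inequalities actually yields is constraints only at pairs $(m,l)$ where both $\sigma_m(X)>\sigma_{m+1}(X)$ and $\sigma_l(Y)>\sigma_{l+1}(Y)$ --- the Abel coefficients vanish otherwise --- i.e.\ a block structure adapted to the groups of repeated singular values. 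One must then show that within each block the residual freedom is exactly a simultaneous unitary re-choice of the $u$'s and $v$'s, so that the singular vectors \emph{can be chosen} identically, and finally that vectors paired with zero singular values can be completed identically for both operators. You flag this as ``the delicate point'' but supply no argument, and it is precisely the degenerate situation that the theorem's ``can be chosen'' phrasing exists to accommodate; the paper's $X(s)$ deformation is engineered to collapse exactly these blocks before handing the degenerate analysis to the cited finite-dimensional result. Until that block analysis is written out, your proposal establishes the inequality in full but the equality case only when the positive singular values are simple.
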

The statement is well known for matrices but, surprisingly, the infinite dimensional version is nowhere to be found in the standard literature on operator theory, and we have also not been able to locate it in any scientific publication. For that reason, we include a proof in Appendix I. The next result shows how to ``lift'' expressions for $\S_\gamma^2$ from vectors to matrices.

\begin{proposition}\label{p2}
Let $\V_1,~\V_2$ be any separable Hilbert spaces. Suppose that $f$ is a permutation and sign invariant $[0,\infty]$-valued functional on $\R^{d}$, $d=\dim \V_1$, and that $F(X)=f(\sigma(X)),$ $X\in\B_2(\V_1,\V_2)$. Then $$\S_{\B_2,\gamma}(F)(Y)=\S_{\R^d,\gamma}(f)(\sigma(Y)).$$ In particular, this identity holds for all matrices.
\end{proposition}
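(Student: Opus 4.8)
The plan is to unfold the definition \eqref{defS} of $\S_{\B_2,\gamma}$ and reduce the supremum over all Hilbert--Schmidt operators $X$ to a supremum over their singular value vectors, using von Neumann's inequality to dispose of the singular vectors. Write $t=\sigma(Y)$. By definition,
$$\S_{\B_2,\gamma}(F)(Y)=\sup_{X\in\B_2(\V_1,\V_2)} -f(\sigma(X))-\frac{\gamma}{2}\fro{X-Y},$$
and I would split this as $\sup_{s}\sup_{X:\,\sigma(X)=s}$, where $s$ ranges over all admissible singular value vectors, i.e.~non-negative decreasing sequences in $\R^d$ which are additionally square-summable when $d=\infty$. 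Expanding $\fro{X-Y}=\fro{X}-2\Re\scal{X,Y}+\fro{Y}=\sum_j s_j^2-2\Re\scal{X,Y}+\sum_j t_j^2$, the only term depending on $X$ beyond its singular values is $\Re\scal{X,Y}$.

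For the inner supremum I would invoke Theorem \ref{vonN}: among all $X$ with $\sigma(X)=s$ one has $\Re\scal{X,Y}\leq\sum_j s_j t_j$, with equality attained by $X=\sum_j s_j\, u_j\otimes v_j$, where $(u_j),(v_j)$ are the singular vectors of $Y$; this operator genuinely lies in $\B_2(\V_1,\V_2)$ precisely because $s$ is square-summable. Consequently
$$\sup_{X:\,\sigma(X)=s}-f(\sigma(X))-\frac{\gamma}{2}\fro{X-Y}=-f(s)-\frac{\gamma}{2}\para{\sum_j s_j^2-2\sum_j s_j t_j+\sum_j t_j^2}=-f(s)-\frac{\gamma}{2}\fro{s-t},$$
so that $\S_{\B_2,\gamma}(F)(Y)=\sup_{s}-f(s)-\frac{\gamma}{2}\fro{s-t}$, where the supremum is still constrained to admissible (non-negative, decreasing) $s$ and $\fro{s-t}$ is computed in $\R^d$.

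It remains to show this equals $\S_{\R^d,\gamma}(f)(t)=\sup_{s\in\R^d}-f(s)-\frac{\gamma}{2}\fro{s-t}$, i.e.~that dropping the constraint does not change the value. Here I would use that $f$ is sign and permutation invariant together with $t\geq 0$: for any $s\in\R^d$, passing to $|s|$ componentwise leaves $f(s)$ unchanged while $\big||s_j|-t_j\big|\leq|s_j-t_j|$ (as $t_j\geq 0$), and sorting $|s|$ into decreasing order again leaves $f$ unchanged while, by the rearrangement inequality, maximizing $\sum_j |s_j| t_j$ and hence minimizing $\fro{s-t}$ (since $\fro{s}$ is permutation invariant). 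Thus every $s$ can be replaced by an admissible one without decreasing the objective, and the two suprema coincide.

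The main obstacle I anticipate is the infinite-dimensional bookkeeping rather than any single inequality: one must confirm that the admissible set is exactly the non-negative decreasing square-summable sequences, so that the realizing operator $\sum_j s_j\, u_j\otimes v_j$ is Hilbert--Schmidt; that the two-stage supremum $\sup_s\sup_{X:\,\sigma(X)=s}$ is legitimate; and that the rearrangement step remains valid for square-summable sequences, which follows by a routine finite-truncation argument. The sign-invariance reduction and the equality clause of Theorem \ref{vonN} are the two ingredients that genuinely use the hypotheses.
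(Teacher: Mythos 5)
Your proposal is correct and follows essentially the same route as the paper: reduce the operator supremum to a supremum over admissible singular-value sequences via von Neumann's inequality (Theorem \ref{vonN}), then use the sign and permutation invariance of $f$ to drop the non-negativity and ordering constraints. The paper states the second step in one line, whereas you justify it explicitly (componentwise absolute values plus the rearrangement inequality, with a truncation remark for $d=\infty$) — a welcome level of detail, but not a different argument.
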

\begin{proof}
Since
$\S_{\B_2,\gamma}(F)(Y)=\sup_X - f(\sigma(X))-\frac{\gamma}{2}\fro{X-Y}_{\B_2}$, von Neumann's inequality implies that the supremum is attained for an $X$ that shares singular vectors with $Y$. Hence
$$\S_{\B_2,\gamma}(F)(Y)=\sup_{\nu_1\geq \nu_2 \geq \ldots } - f(\nu)-\frac{\gamma}{2}\fro{\nu-\sigma(Y)}_2.$$
Due to the permutation and sign invariance of $f$, we can drop the restrictions on $\nu$ and so
$$\S_{\B_2,\gamma}(F)(Y)=\sup_{\nu} - f(\nu)-\frac{\gamma}{2}\fro{\nu-\sigma(Y)}_2=\S_{\R^d,\gamma}(f)(\sigma(Y)).$$
\end{proof}
It is now easy to determine the $\S$-transform of the rank-functional on matrices.
\begin{ex}\label{exrank}
By combining Proposition \ref{p1} and \ref{p2} with \eqref{rank0}, \eqref{14} and \eqref{15} we immediately get that
\begin{equation}\label{Srank}\S_\gamma(\mu\rank)(Y)=\sum_{j=1}^{d}\max \left\{\frac{-\gamma\sigma_j(Y)^2}{2},-\mu\right\}\end{equation} and \begin{equation}\label{SSrank}\S_\gamma^2(\mu\rank)(X)=\sum_{j=1}^{d}\mu-\para{\max\{\sqrt{\mu}-\frac{\sqrt{\gamma}\sigma_j(X)}{\sqrt{2}},0\}}^2.\end{equation}
\end{ex}
Expressions \eqref{Srank} and \eqref{SSrank} first appeared in \cite{larsson2016convex}, but we include them to illustrate the use of Propositions \ref{p1} and \ref{p2}.

\subsubsection{$\S_\gamma$-transforms in weighted matrix-spaces}

In many applications it is desirable to replace the Frobenius norm with a weighted norm. In this section we show how this can be done for a particular class of weights.  Given $W\in\m_{m,n}$ with (strictly) positive entries, we let $\m_{m,n}^W$ be the Hilbert space obtained by introducing the norm $$\|X\|_W^2=\sum_{i,j}w_{i,j}|x_{i,j}|^2,$$ where e.g.~$w_{i,j}$ are the entries of $W$. In case $W=\textbf{1}$, i.e. $W$ is equal to one componentwise, we will simply write $\m_{m,n}$ as earlier. Suppose now that we are interested in computing $\S_{\m_{m,n}^W,\gamma}(f)$, where $f$ is such that $\S_{\m_{m,n},\gamma}(f)$ has an explicit expression. In general, this will only be possible if $W$ is a direct tensor, i.e. of the form
\begin{equation}\label{simpler}w_{i,j}=u_iv_j\end{equation}
where $u$ and $v$ are sequences of length $m$ and $n$ respectively. The following examples and proposition show how to do this. A linear operator between two spaces that is bijective and isometric will be referred to as unitary.

\begin{ex}\label{exmatrixes}
Under the assumption \eqref{simpler}, note that $$X\mapsto I_{\sqrt{v}}XI_{\sqrt{u}}$$ is unitary between $\m^W_{m,n}$ and $\m_{m,n}$, where e.g.~$I_{\sqrt{u}}$ is a diagonal matrix with $\sqrt{u}=(\sqrt{u_j})_{j=1}^n$. Also note that $I_{\sqrt{v}}:\m_{m,1}^v\rightarrow \C^m$ and $I_{\sqrt{u}}:\C^n\rightarrow \m_{n,1}^{1/u}$ are unitary, where $1/u$ refers to componentwise division. The space $\m_{n,1}^{1/u}$ is of course the same as $\C^n$ as a vector space, but with a different norm. In fact, if $e_1,\ldots,e_n$ denotes the canonical basis in $\C^n$, we have that $u_j=\sqrt{u_j}e_j$ ($j=1,\ldots,n$) defines an orthonormal basis in $\m_{n,1}^{1/u}$. Each matrix $X=(x_{i,j})\in\m_{m,n}^W$ defines an operator $X\in \B_2(\m_{n,1}^{1/u},\m_{m,1}^{v})$ by the usual matrix multiplication, i.e. $(X y)_i=\sum_j x_{i,j}y_j$. It is easy to see that $$\|X\|_{\B_2(\m_{n,1}^{1/u},\m_{m,1}^{v})}^2=\sum_{j=1}^n\|Xu_j\|_{\m_{m,1}^v}^2=\sum_{j=1}^n\sum_{i=1}^m u_jv_i|x_{i,j}|^2=\|X\|_{\m_{m,n}^W}^2.$$
\end{ex}

\begin{proposition}\label{propositionweighted}
Let $\V_1,~\tilde\V_1~\V_2$ and $\tilde\V_2$ be separable Hilbert spaces, let $I_1:\V_1\rightarrow \tilde\V_1$ be unitary and let $I_2:\tilde\V_2\rightarrow \V_2$ be unitary. Then the induced map $\I:\B_2(\tilde\V_1,\tilde\V_2)\rightarrow \B_2(\V_1,\V_2)$ given by $\I(X)=I_2XI_1$ is unitary.

Moreover, let $f$ be an $[0,\infty]$-valued functional on $\B_2(\tilde\V_1,\tilde\V_2)$. Then $$\S_{\B_2(\tilde\V_1,\tilde\V_2),\gamma}(f)(Y)=\S_{\B_2(\V_1,\V_2),\gamma}(f\circ\I^{-1})(\I(Y))$$ and
$$(\S_{\B_2(\tilde\V_1,\tilde\V_2),\gamma})^2(f)(X)=(\S_{\B_2(\V_1,\V_2),\gamma})^2(f\circ\I^{-1})(\I(X))$$
\end{proposition}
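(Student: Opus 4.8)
The plan is to establish unitarity of $\I$ first, then prove a single change-of-variables lemma for the $\S_\gamma$-transform under an arbitrary unitary, and finally read off both displayed identities — the second by applying the first twice. Linearity of $\I(X)=I_2 X I_1$ is immediate, and since $I_1,I_2$ are unitary (so invertible with $I_j^{-1}=I_j^*$), $\I$ is a bijection with inverse $\I^{-1}(Z)=I_2^{-1}ZI_1^{-1}$; the one substantive point is the isometry. Fixing an orthonormal basis $(g_k)$ of $\V_1$ and using the basis-independent formula for the Hilbert--Schmidt norm, I would compute
$$\fro{\I(X)}_{\B_2(\V_1,\V_2)}=\sum_k \fro{I_2 X I_1 g_k}_{\V_2}=\sum_k \fro{X I_1 g_k}_{\tilde\V_2},$$
where the second equality uses that $I_2$ is isometric. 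Since $I_1$ is unitary, $(I_1 g_k)_k$ is an orthonormal basis of $\tilde\V_1$, so the last sum equals $\fro{X}_{\B_2(\tilde\V_1,\tilde\V_2)}$, again by basis-independence. This shows $\I$ is unitary.

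Next I would record the key lemma: if $U:\H_1\rightarrow\H_2$ is any unitary between separable Hilbert spaces and $h$ is an \emph{arbitrary} functional on $\H_1$, then $\S_{\H_1,\gamma}(h)(Y)=\S_{\H_2,\gamma}(h\circ U^{-1})(U(Y))$ for every $Y$. This follows directly from the definition \eqref{defS} by the substitution $Z=U(X)$ in the supremum: the substitution is a bijection, $h(U^{-1}(Z))=h(X)$, and $\fro{Z-U(Y)}=\fro{U(X-Y)}=\fro{X-Y}$ by linearity and isometry, so the two suprema range over the same set of values. I emphasize that this requires no sign or finiteness assumption on $h$, which is exactly what makes the iteration below legitimate.

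With the lemma in hand, the first identity is the case $U=\I$, $h=f$, $\H_1=\B_2(\tilde\V_1,\tilde\V_2)$, $\H_2=\B_2(\V_1,\V_2)$. For the second, set $g=\S_{\B_2(\tilde\V_1,\tilde\V_2),\gamma}(f)$; read as an equality of functionals, the first identity states $\S_{\B_2(\V_1,\V_2),\gamma}(f\circ\I^{-1})=g\circ\I^{-1}$. Applying the lemma a second time, now to the functional $g$ (which is only $(-\infty,0]$-valued, so the generality of the lemma is essential), gives $\S_{\B_2(\V_1,\V_2),\gamma}(g\circ\I^{-1})(\I(X))=\S_{\B_2(\tilde\V_1,\tilde\V_2),\gamma}(g)(X)$; the left-hand side is $(\S_{\B_2(\V_1,\V_2),\gamma})^2(f\circ\I^{-1})(\I(X))$ and the right-hand side is $(\S_{\B_2(\tilde\V_1,\tilde\V_2),\gamma})^2(f)(X)$, which is the claim. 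The only genuinely non-routine step is the isometry of $\I$ on Hilbert--Schmidt operators, and even that reduces to basis-independence of the Hilbert--Schmidt norm plus the fact that unitaries carry orthonormal bases to orthonormal bases; everything else is a formal manipulation of the supremum defining $\S_\gamma$.
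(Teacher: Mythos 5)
Your proof is correct and takes essentially the same route as the paper's: a change of variables in the supremum defining $\S_\gamma$ under the unitary $\I$ (which the paper writes out directly for $f$, and you abstract into a lemma for an arbitrary functional), with the second identity obtained by applying the first twice. Your explicit observation that the lemma needs no sign or finiteness assumption is a welcome refinement, since the iteration implicitly applies the identity to the $(-\infty,0]$-valued functional $\S_{\B_2(\tilde\V_1,\tilde\V_2),\gamma}(f)$, and you also fill in the isometry of $\I$, which the paper dismisses as immediate from the definition of the Hilbert--Schmidt norm.
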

\begin{proof}
The first statement is immediate by the definition of the Hilbert-Schmidt norm. The first identity follows from the calculation \begin{align*}&\S_{\B_2(\tilde\V_1,\tilde\V_2),\gamma}(f)(Y)=\sup_{X\in\B_2(\tilde\V_1,\tilde\V_2)} -f(X) -\frac{\gamma}{2}\|X-Y\|^2_{\B_2(\tilde\V_1,\tilde\V_2)}=\\&=\sup_{X\in\B_2(\tilde\V_1,\tilde\V_2)} -f(\I^{-1}(\I X)) -\frac{\gamma}{2}\|\I(X-Y)\|^2_{\B_2(\V_1,\V_2)}=
\\&=\sup_{Z\in\B_2(\V_1,\V_2)} -f(\I^{-1}(Z)) -\frac{\gamma}{2}\|Z-\I Y\|^2_{\B_2(\V_1,\V_2)}=\S_{\B_2(\V_1,\V_2),\gamma}(f\circ\I^{-1})(\I Y),\end{align*}
and the latter is a consequence of applying the former twice.
\end{proof}

\begin{ex}\label{exmatrixcont}
We continue Example \ref{exmatrixes}. Set $\V_1=\C^n$, $\tilde\V_1=\m_{n,1}^{1/u}$, $I_1=I_{\sqrt{u}}$, $\V_2=\C^m$, $\tilde\V_2=\m_{m,1}^v$, $I_2=I_{\sqrt{v}}$ and $f=\rank$.
Note that $f\circ\I^{-1}=f$ since left or right multiplication with invertible diagonal matrices do not change the rank. By Proposition \ref{propositionweighted} and Example \ref{exrank} we conclude that
\begin{equation}\label{SrankGen}\S_{\m_{m,n}^W}(\rank)(Y)=\S_{\m_{m,n}}(\rank)(\I(Y))=\sum_{j=1}^{n}\max \left\{\frac{-\sigma_j(I_{\sqrt{v}} Y I_{\sqrt{u}})^2}{2},-1\right\}\end{equation} and \begin{equation}\label{SSrankGen}\S_{\m_{m,n}^W}^2(\rank)(X)=\S_{\m_{m,n}}^2(\rank)(\I(X))=\sum_{j=1}^{n}1-\para{\max\{1-\frac{\sigma_j(I_{\sqrt{v}} X I_{\sqrt{u}})}{\sqrt{2}},0\}}^2,\end{equation}
generalizing \eqref{Srank} and \eqref{SSrank}.
\end{ex}

The expression \eqref{SSrankGen} is new to this paper and can be used e.g.~for applications in frequency estimation (see e.g.~\cite{andersson2017fixed}), which will be further investigated in depth elsewhere. However, the following example (as well as Example \ref{exweights1} and \ref{tuesday} explains the main idea).

\begin{ex}\label{exweights} Fix $n\in \N$ and let $H_f\in\m_{n,n}$ be the Hankel matrix generated by the sequence $f=(f_1,\ldots,f_{2n-1})\in\C^{2n-1}$. If one is interested in minimizing the rank of a Hankel matrix $H_f$ while at the same time not deviating far from some measurement $d\in\C^{2n-1}$, as is frequent in frequency estimation \cite{andersson2016fixed}, one option is to minimize the functional $\rank(X)+\frac{1}{2}\fro{X-H_d}_F$ over the set of Hankel matrices, (we consider minimization over subspaces in more detail in Part II, Example \ref{tuesday}). Setting $X=H_f$, the quadratic term $\fro{X-H_d}_F$ corresponds to a weighted misfit term of the form \begin{equation}\label{triangleweight}\fro{H_f-H_d}_F=\sum_{j=1}^{2n-1}(n-|j-n|)|f_j-d_j|^2,\end{equation} (see Figure \ref{figweights}, left) which is not the most natural quantity to minimize, as has been observed by many authors (e.g. \cite{gillard2013optimization}).
\begin{figure}
\begin{center}
\includegraphics[width=0.49\linewidth]{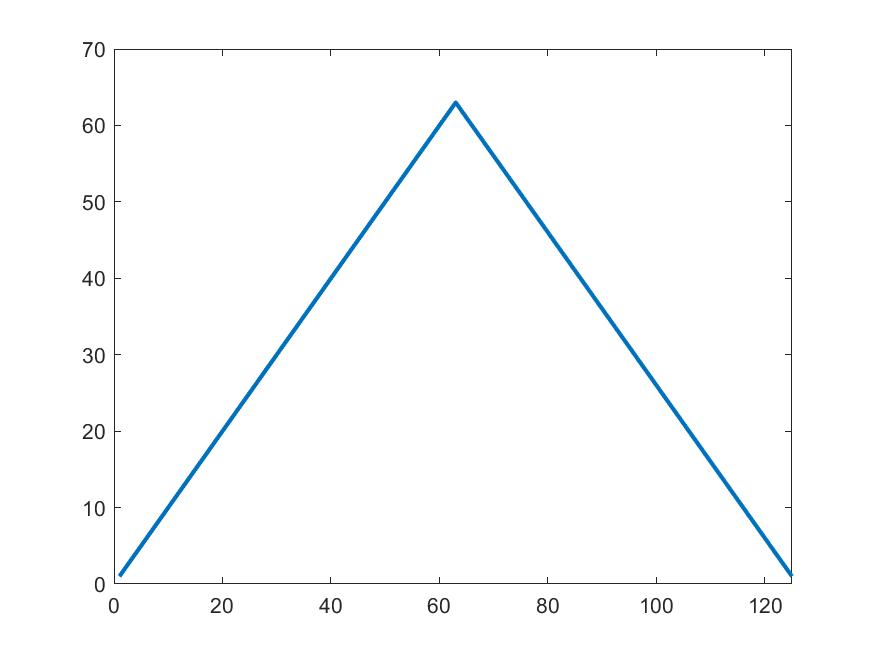}
\includegraphics[width=0.49\linewidth]{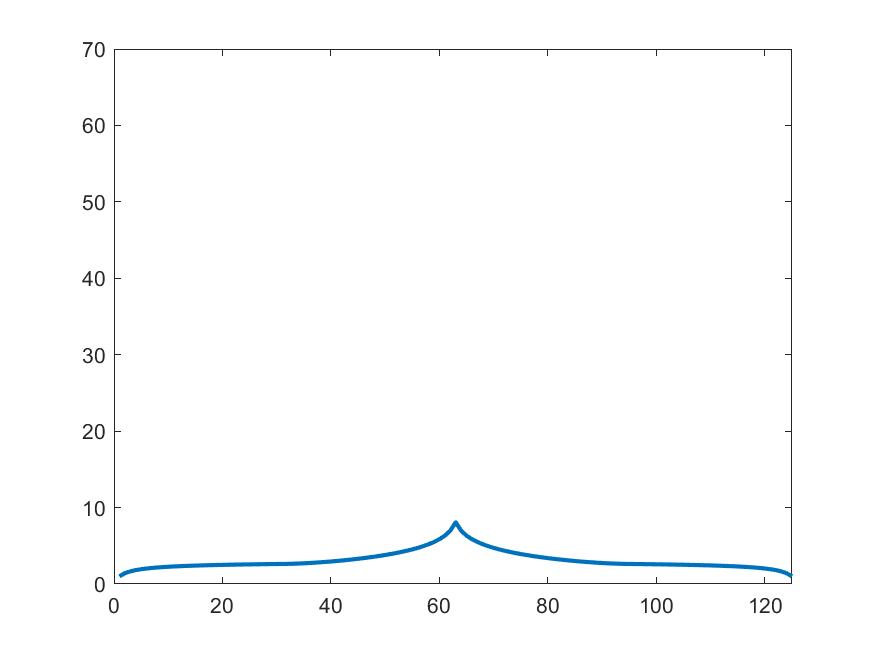}
\end{center}
\caption{Left; the weight appearing in \eqref{triangleweight} for $n=63$. Right; corresponding weight for \eqref{otherweight}.}\label{figweights}
\end{figure}
\end{ex}

\begin{ex}\label{exweights1} Continuing example \ref{exweights} we consider minimization of the functional $$\rank(X)+\frac{1}{2}\fro{X-H_d}_{W}$$ over the set of Hankel matrices, where we assume that $n=2k-1$ is odd and that  $w_{i,j}=u_iu_j$ with $u_i=\frac{1}{\sqrt{k-|i-k|}}$. By the above theory the l.s.c.~convex envelope is given by $$\sum_{j=1}^{n}1-\para{\max\{1-\frac{\sigma_j(I_{\sqrt{u}} X I_{\sqrt{u}})}{\sqrt{2}},0\}}^2+\frac{1}{2}\fro{X-H_d}_{W}.$$ Inserting $X=H_f$ in the quadratic term gives
\begin{equation}\label{otherweight}\fro{H_f-H_d}_W=\sum_{j=1}^{2n-1}\omega_j|f_j-d_j|^2,\end{equation} where $\omega_j$ is depicted in Figure \ref{figweights}, right. Compared with \eqref{triangleweight}, this weight is clearly much closer to a uniform flat weight (both weights \eqref{triangleweight} and \eqref{otherweight} start and end with the weight 1, so the scaling in Figure \ref{figweights} is fair). What the optimal choice of $u$ would be in order to yield as flat a weight as possible, is to our knowledge an open question.
\end{ex}

\subsubsection{Known ``model-order''}

Functionals of the type \eqref{l0} and \eqref{rank0} arise naturally if one looks for a ``sparse'' solution, but the degree of sparsity is not known, i.e. the number $M$ of non-zero parameters or the ``model-order''. In many applications, e.g.~rigid structure from motion, photometric stereo, optical flow \cite{larsson2016convex}, the model order is known and better results are obtained if this information is built into the functional to be minimized. This leads to consideration of functionals like

\begin{ex}\label{exfixedK}
In $\C^d$ define $\iota_M(x)=\left\{\begin{array}{cc}
                     0 & \|x\|_0 \leq M,\\
                     \infty & \text{ else.}
                   \end{array}
\right.$ and define $\tilde x$ to be a vector $x$ resorted so that $(|\tilde x_j|)_{j=1}^d$ is a non-increasing sequence. Then
\begin{align*}&\S(\iota_M)(y)= \sum_{j=M+1}^d -\frac{1}{2}|\tilde y_j|^2.\end{align*}
To see this, note that
$\S(\iota_M)(y)=\sup_x -\iota_M(x)-\frac{1}{2}\sum_{j=1}^d(x_j-y_j)^2,$ and it is clear that the optimal value of $x_j$ is $y_j$ if $|y_j|$ is among the $M$ greatest, and zero else.

The computation of $\S^2(\iota_M)$ is more involved. The expression is \begin{align*}&\S^2(\iota_M)(x)= \frac{1}{2k_*}\para{\sum_{j>M-k_*}|\tilde{x}_j|}^2-\frac{1}{2}\sum_{j>M-k_*}|\tilde x_j|^2\end{align*} where $k_*$ is a particular number between 1 and $M$.
\end{ex}
This is derived in \cite{andersson2016convex}, albeit without using the $\S$-transform explicitly and in the setting of matrices with fixed rank (see Example \ref{ale}). Nevertheless, the computations are easily adapted to $\iota_M$ as above. We now lift the above functional to the matrix case.

\begin{ex}\label{ale}
Let $\mathcal{M}_M\subset\m_{n,n}$ be the manifold of matrices of rank $\leq M$, and let $\iota_{\mathcal{M}_M}$ be the indicator functional of $\mathcal{M}_M$, i.e. the functional which is 0 on $\mathcal{M}_M$ and $\infty$ elsewhere. Letting $\iota_M$ be as above, note that $\iota_{\mathcal{M}_M}(X)=\iota_M(\sigma(X))$. Hence we can use Proposition \ref{p2} to see that $\iota_{\mathcal{M}_M}(X)$ has $\S$-transform $\S(\iota_M)(\sigma(Y))$ and \begin{equation}\label{lund}\S^2(\iota_{\mathcal{M}_M})(X)=\S^2(\iota_M)(\sigma(X)),\end{equation} we refer to \cite{andersson2016convex} or \cite{larsson2016convex} for more information on this particular functional. The latter reference investigates the present example and Example \ref{exrank} in a more general framework, looking at functionals of the form $g(\rank(X))$ where $g$ is a ``convex'' non-decreasing functional on the natural numbers (see eq. (5) in \cite{larsson2016convex} for a precise definition). They derive a feasible algorithm for computing $\S^2(g(\rank(X)))$, which in their nomenclature is denoted $\mathcal{R}_g(X)$ (see eq. (19)). It is easy to see that the same method can be adapted to also deal with functionals on $\R^n$ of the form $x\mapsto g(|\tilde x|)$, where $\tilde{x}$ is as in Example \ref{exfixedK}.

We remark that \eqref{lund} is valid with respect to unweighted $\m_{n,n}$. If we consider a weight $W$ as in Example \ref{exmatrixes}, then, in analogy with Example \ref{exmatrixcont}, we have
\begin{equation}\label{SSfixedrankGen}\S_{\m_{m,n}^W}^2(\iota_{\mathcal{R}_M})(X)=\S^2(\iota_M)(\sigma(I_{\sqrt{v}} X I_{\sqrt{u}})).\end{equation}

\end{ex}

\subsubsection{Positivity constraints}

We now look at functionals on eigenvalues rather than singular values. Let $\V$ be any separable Hilbert space, denote by $\h (\V) \subset \B_2(\V,\V)$ the space of self-adjoint (Hermitian) operators, and let $\lambda(X)$ denote the eigenvalues of a given $X\in\h (\V)$. In case $\V$ is of finite dimension $n$, so that $\B_2(\V,\V)\simeq \m_{n,n}$, we simply write $\h_{n}$. Keeping in mind that the singular values of a self-adjoint matrix are simply the modulus of the corresponding eigenvalues, the proof of Proposition \ref{p2} can easily be modified to give

%\begin{wrapfigure}{r}{0.3\textwidth}\label{figbasic2}
%  \begin{center}
%     \includegraphics[width=.35\textwidth, trim= 1cm 1cm 1cm 1cm]{basic2.jpg}\\
%  \end{center}
%  \caption{Illustration of $f$ (red) and $\S^2(f)$ (blue) in Example \ref{ex11}, with $\mu=1$.}
%\end{wrapfigure}

\begin{proposition}\label{p33}
Let $\V$ be a separable Hilbert space. Suppose that $f$ is a permutation invariant functional on $\R^{\dim\V}$ and that $F:\h(\V)\rightarrow\R$ is given by $F(X)=f(\lambda(X))$. Then $$\S_{\h(\V),\gamma}(F)(Y)=\S_{\R^{\dim\V},\gamma}(f)(\lambda(Y)).$$
\end{proposition}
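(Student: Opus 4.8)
The plan is to mimic the proof of Proposition \ref{p2}, replacing von Neumann's trace inequality (for singular values) with its self-adjoint analogue for eigenvalues, namely the Ky Fan / Theobald inequality $\scal{X,Y}_{\B_2}\leq \sum_{j}\lambda_j(X)\lambda_j(Y)$, valid for $X,Y\in\h(\V)$ with eigenvalues ordered decreasingly, with equality precisely when $X$ and $Y$ can be simultaneously diagonalized by a common orthonormal eigenbasis. Since we are now working in $\h(\V)$ rather than in $\B_2(\V_1,\V_2)$, and eigenvalues range over all of $\R$ (not just $[0,\infty)$), the key difference is that we only require $f$ to be permutation invariant, not sign invariant; this is why the hypothesis in this proposition drops the sign-invariance demanded in Proposition \ref{p2}.

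First I would unwind the definition: by \eqref{defS},
$$\S_{\h(\V),\gamma}(F)(Y)=\sup_{X\in\h(\V)} -f(\lambda(X))-\frac{\gamma}{2}\fro{X-Y}_{\B_2}.$$
Expanding $\fro{X-Y}_{\B_2}=\fro{X}_{\B_2}-2\Re\scal{X,Y}_{\B_2}+\fro{Y}_{\B_2}$, and noting that $\fro{X}_{\B_2}=\sum_j\lambda_j(X)^2$ depends only on the eigenvalues of $X$, I would apply the self-adjoint trace inequality to bound $\Re\scal{X,Y}_{\B_2}\leq\sum_j\lambda_j(X)\lambda_j(Y)$. This shows the supremum cannot exceed what one obtains by restricting to matrices $X$ sharing an eigenbasis with $Y$, and since equality in the inequality is attainable (choose $X$ diagonal in $Y$'s eigenbasis), the supremum over all of $\h(\V)$ collapses to a supremum over vectors of eigenvalues:
$$\S_{\h(\V),\gamma}(F)(Y)=\sup_{\nu_1\geq\nu_2\geq\cdots}-f(\nu)-\frac{\gamma}{2}\fro{\nu-\lambda(Y)}_2.$$

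Finally, permutation invariance of $f$ (and of the $\ell^2$-norm term) lets me drop the ordering constraint $\nu_1\geq\nu_2\geq\cdots$: any unordered maximizer can be permuted into decreasing order without changing the objective, so the constrained and unconstrained suprema agree. This yields
$$\S_{\h(\V),\gamma}(F)(Y)=\sup_{\nu}-f(\nu)-\frac{\gamma}{2}\fro{\nu-\lambda(Y)}_2=\S_{\R^{\dim\V},\gamma}(f)(\lambda(Y)),$$
as desired. The main obstacle — really the only substantive point — is justifying the eigenvalue trace inequality and its equality condition in the infinite-dimensional Hilbert-Schmidt setting; for self-adjoint operators this should follow from the spectral theorem together with the same type of argument used for von Neumann's inequality in Theorem \ref{vonN}, or by reducing to the singular value case and tracking signs. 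Once that tool is in hand, the remaining steps are the same bookkeeping (the sign-invariance-free version of) as in the proof of Proposition \ref{p2}.
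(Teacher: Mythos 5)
Your proposal is correct and follows essentially the same route as the paper: the paper's own ``proof'' is just the remark that the proof of Proposition \ref{p2} carries over once one recalls that the singular values of a self-adjoint operator are the moduli of its eigenvalues, i.e.\ it rests on precisely the self-adjoint (Ky Fan/Theobald) variant of von Neumann's inequality that you invoke, with the same use of the eigenbasis of $Y$ to attain equality and permutation invariance to drop the ordering constraint. You also correctly identify why sign invariance is no longer needed, which is the only substantive change from Proposition \ref{p2}.
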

Suppose we are interested in positive matrices with low rank. In analogy with the previous developments, this calls for an investigation of the following functional.

\begin{ex}\label{ex11}
Set $\V=\R$, recall that $\chi_S$ is the characteristic functional of a given set $S$, whereas $\iota_S$ denotes the indicator functional, and set $$f(x)=\mu\chi_{(0,\infty)}(x)+\iota_{(-\infty,0)}$$. By a variation of the calculations in Example \ref{ex1}, we have
\begin{equation*}\S_\gamma(f)(y)= -\para{\min\left\{\frac{\sqrt{\gamma} y}{\sqrt{2}},\sqrt{\mu}\right\}}^2
\end{equation*}
and
\begin{equation*}\S_\gamma^2(f)(x)=\mu-\para{\max\{\sqrt{\mu}-\frac{\sqrt{\gamma} x}{\sqrt{2}},0\}}^2+\iota_{(-\infty,0)}(x)\end{equation*}
\end{ex}

\begin{ex}
Let $f$ be as above with $\mu=1$ and let $\mathbb{P}_{n}\subset\h_{n}$ be the set of positive matrices. Define $F$ on $\h_{n}$ by $$F(X)=\sum_{j=1}^n f(\lambda_j(X))=\rank(X)+\iota_{\{X\not\in \mathbb{P}_{n}\}}(X).$$ Then $$\S^2(F)(X)=\sum_{j=1}^n \S^2(f)(\lambda_j(X))=\sum_{j=1}^n 1-\para{\max\left\{1-\frac{ \lambda_j(X)}{\sqrt{2}},0\right\}}^2+\iota_{\{X\not\in \mathbb{P}_{n}\}}(X),$$ as follows by combining Proposition \ref{p1}, \ref{p33} with Example \ref{ex11}. This expression has been published previously in \cite{andersson2016fixed}, investigating applications to half-life parameter estimation.
\end{ex}

Finally, suppose we want to have at most $M$ positive eigenvalues and no negative ones.

\begin{ex}\label{exfixedKpos}
On $\R^d$ define $\iota_M^+(x)=\left\{\begin{array}{cc}
                     0 & \|x\|_0 \leq M \text{ and } x\geq 0,\\
                     \infty & \text{ else.}
                   \end{array}
\right.$. By a refinement of Example \ref{exfixedK} we have
\begin{align*}&\S(\iota_M^+)(y)= \frac{1}{2}\para{\sum_{j=1}^M |\max(\tilde y_j,0)|^2-\fro{y}}\end{align*}
where $\tilde{y}$ now denotes the vector obtained by reordering $y$ to a non-increasing vector.
\end{ex}
We omit a computation of $\S^2(\iota_M^+)$, because it is not needed for the evaluation of the proximal operator (see Proposition \ref{propprox}). The details are similar to those in \cite{andersson2016convex}.

To summarize this section, we have shown that the $\S$-transform is a useful tool for computing l.s.c.~convex envelopes of \eqref{t553} and simplified the computation of such convex envelopes in a number of known instances. We have also provided a number of new l.s.c.~convex envelopes of rather intricate functionals.
\subsection{Weak lower semi-continuity and finer properties of l.s.c. convex envelopes}\label{finer}

In this final section we prove Theorem \ref{o9}. We also give a result, based on an extension of the Milman theorem by Arne Br\o ndsted \cite{brondsted1966milman}, about the structure of l.s.c.~convex envelopes which seems relatively unknown. For this we need the concept of weak lower-semicontinuity, which is nothing but semi-continuity with respect to the weak topology of the underlying separable Hilbert space $\V$. We remind the reader that for convex proper functionals there is no difference (Theorem 9.1 \cite{bauschke2011convex}) between weakly l.s.c.~functionals and standard l.s.c.~functionals. Also, if $\V$ is finite dimensional and the topology is Hausdorff, the two topologies are the same (Exc. 18, Ch. IV.1 \cite{conway2013course}), so there is no difference in this case either. However, we wish to underline that the difficulty in proving the main result is present also in the finite-dimensional setting.

Examples of weakly l.s.c.~functionals include the support-cardinality functional $\|x\|_0=\#\{k\in\N:~x_k\neq 0\}$ in $\ell^2(\N)$, as well as the rank functional on $\mathcal B_2(\V_1,\V_2)$. In particular, if $\V_1=\C^n$ and $\V_2=\C^m$ with the canonical norms, then $\mathcal{B}_2(\V_1,\V_2)$ equals $\m_{m,n}$ with the Frobenius norm. For completeness, we include a proof of these claims in Appendix II. The main result of this section is the following theorem, whose proof comes at the end.

\begin{theorem}\label{propconcave}
Let $f$ be a weakly l.s.c.~$[0,\infty]$-valued functional on a separable Hilbert space $\V$. For each $x_0\in\V$ with $f(x_0)>\S_\gamma^2(f)(x_0)$ there exists a unit vector $\nu$ and $t_0>0$ such that the function $h(t)= \S_\gamma^2(f)(x_0+t\nu)$ has second derivative $-\gamma$ on $(-t_0,t_0)$.
\end{theorem}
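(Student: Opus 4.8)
The plan is to translate the statement into a question about extreme points of the epigraph of a convex envelope and then invoke a Milman-type theorem. First I would set $g(x)=f(x)+\frac{\gamma}{2}\fro{x}$ and let $G=g^{**}$ be its l.s.c.\ convex envelope. By Theorem \ref{t1} (applied with $d=0$) we have $G(x)=\S_\gamma^2(f)(x)+\frac{\gamma}{2}\fro{x}$, and $g$ is itself weakly l.s.c., being the sum of the weakly l.s.c.\ functional $f$ and the weakly l.s.c.\ quadratic $\frac{\gamma}{2}\fro{\cdot}$. The hypothesis $f(x_0)>\S_\gamma^2(f)(x_0)$ is then exactly $g(x_0)>G(x_0)$, and since $\frac{\gamma}{2}\fro{x_0+t\nu}$ has second derivative $\gamma$ in $t$, the desired conclusion that $h(t)=\S_\gamma^2(f)(x_0+t\nu)=G(x_0+t\nu)-\frac{\gamma}{2}\fro{x_0+t\nu}$ has second derivative $-\gamma$ on $(-t_0,t_0)$ is equivalent to the statement that $t\mapsto G(x_0+t\nu)$ is affine on $(-t_0,t_0)$. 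Thus everything reduces to: if the convex envelope $G$ strictly undershoots $g$ at $x_0$, then $G$ contains an affine segment through $x_0$.

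I would establish this through the non-extremality of the point $(x_0,G(x_0))$ in $\mathrm{epi}(G)$. Since $G$ is the l.s.c.\ convex envelope of $g$, its epigraph is the closed convex hull of $\mathrm{epi}(g)$, and because $g$ is weakly l.s.c., $\mathrm{epi}(g)$ is weakly closed. The Milman-type theorem of Br\o ndsted \cite{brondsted1966milman} then guarantees that every extreme point of $\overline{\mathrm{conv}}(\mathrm{epi}(g))$ already lies in $\mathrm{epi}(g)$. Hence, were $(x_0,G(x_0))$ an extreme point of $\mathrm{epi}(G)$, we would have $(x_0,G(x_0))\in\mathrm{epi}(g)$, i.e.\ $G(x_0)\geq g(x_0)$, contradicting $G(x_0)<g(x_0)$. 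Therefore $(x_0,G(x_0))$ is not extreme. (One first checks the routine bookkeeping that $G(x_0)<\infty$, which is immediate from $f(x_0)>\S_\gamma^2(f)(x_0)\geq 0$, so that $(x_0,G(x_0))$ is a genuine boundary point of the epigraph.)

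With non-extremality in hand the segment is easy to extract: write $(x_0,G(x_0))=\frac{1}{2}\para{(y_1,s_1)+(y_2,s_2)}$ with $(y_1,s_1)\neq(y_2,s_2)$, both in $\mathrm{epi}(G)$. Convexity of $G$ forces $s_i=G(y_i)$, forces $y_1\neq y_2$, and forces $G$ to agree with its chord at the midpoint of $[y_1,y_2]$, hence to be affine on all of $[y_1,y_2]$, of which $x_0$ is the midpoint. Setting $\nu=(y_2-y_1)/\|y_2-y_1\|$ and $t_0=\frac{1}{2}\|y_2-y_1\|>0$ yields affinity of $t\mapsto G(x_0+t\nu)$ on $(-t_0,t_0)$, and the reduction of the first paragraph then finishes the proof.

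The main obstacle is the application of the Milman-type theorem in this infinite-dimensional and, crucially, unbounded setting. The classical Milman theorem requires compactness of the convex hull, which fails outright for $\mathrm{epi}(G)$ since it is never bounded. This is precisely where weak lower semicontinuity is indispensable rather than cosmetic: it makes $\mathrm{epi}(g)$ weakly closed, so that the bounded truncations used in the argument are weakly compact by reflexivity of the Hilbert space, and it is Br\o ndsted's refinement that packages this into a usable statement about extreme points of closed convex hulls of unbounded sets. Everything else—the reduction via Theorem \ref{t1} and the elementary convexity computation extracting the affine segment—is routine by comparison.
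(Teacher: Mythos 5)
Your proposal is, in substance, the paper's own proof: the paper likewise sets $g(x)=f(x)+\frac{\gamma}{2}\fro{x}$, identifies $g^{**}(x)=\S_\gamma^2(f)(x)+\frac{\gamma}{2}\fro{x}$ via Theorem \ref{t1}, and derives the affine segment through $x_0$ from non-extremality of $(x_0,g^{**}(x_0))$ via Br\o ndsted's Milman-type theorem (packaged in the paper as Theorems \ref{arne} and \ref{brondsted}); your explicit midpoint extraction of the segment is simply an unwound form of the paper's appeal to the definition of an extremal point, and your reduction showing $h''=-\gamma$ is identical.

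There is, however, one step you state in a form that is false in general and that needs tightening. Br\o ndsted's Theorem 1 is not a Milman theorem for arbitrary unbounded weakly closed sets: its hypothesis is that the functional be inf-compact in some direction, and weak closedness of $\mathrm{epi}(g)$ together with reflexivity of $\V$ (your last paragraph) does not by itself supply this. The paper verifies the hypothesis by checking that $g^{**}$ is coercive, which is where the standing assumption that $f$ is $[0,\infty]$-valued actually enters: since $\S_\gamma^2(f)\geq 0$ (Proposition \ref{propprop}), one has $g^{**}(x)\geq\frac{\gamma}{2}\fro{x}$, so the level sets of $g^{**}$ are bounded, closed and convex, hence weakly compact. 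Your write-up never uses the positivity of $f$ for this purpose, so as written the invocation of the Milman-type theorem rests on an unverified (and, stated at that generality, untrue) premise; with the coercivity check inserted, your argument coincides with the paper's proof.
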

The proof relies on a neat fact concerning weakly l.s.c.~convex envelopes which does not seem to have made its way into the modern literature on the subject. As mentioned earlier, it is a reformulation of Arne Br\o ndsted's extension of Milman's theorem. To state it, we remind the reader that a functional $g$ is coercive if and only if its (lower) level sets are bounded, (see e.g.~Proposition 11.11 \cite{bauschke2011convex}). Note that l.s.c.~convex envelopes of the type $\S^2(f)(x)+\frac{1}{2}\|x-d\|^2$ (for positive $f$) always are coercive, by virtue of Proposition \ref{propprop} and the quadratic term. Recall that $g^{**}$ is the l.s.c.~convex envelope of a given functional $g$ by the Fenchel-Moreau theorem. A function $f$ on $\R$ is called affine if it is of the form $f(t)=at+b$ with $a,b\in\R$.
\begin{theorem}\label{brondsted}
Let $g$ be a weakly l.s.c.~functional on a separable Hilbert space $\V$ such that $g^{**}$ is coercive. Given any $x\in\V$, we either have $g(x)=g^{**}(x)$ or there exists a unit vector $\nu$ and $t_0>0$ such that the function $h(t)= g^{**}(x_0+t\nu)$ is affine on $(-t_0,t_0)$.
\end{theorem}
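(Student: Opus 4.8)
The plan is to phrase the statement in terms of epigraphs and reduce it to Milman's theorem on extreme points. Write $\mathrm{epi}(h)=\{(y,s)\in\V\times\R:~h(y)\le s\}$. Since $g$ is weakly l.s.c., $\mathrm{epi}(g)$ is weakly closed, and since $g^{**}$ is the largest l.s.c.\ convex minorant of $g$, its epigraph is the (weakly) closed convex hull $\overline{\mathrm{conv}}^{\,w}(\mathrm{epi}(g))$ (closed and weakly closed convex hulls agree by Mazur's theorem). Fix $x$ and set $v_0=g^{**}(x)$, $p=(x,v_0)$. The whole argument is organized around the dichotomy: either $p$ is an extreme point of $\mathrm{epi}(g^{**})$, or it is not.

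First I would dispose of the non-extreme case, which yields the affine direction. If $p$ is not extreme, there are distinct $q_1=(y_1,s_1)$ and $q_2=(y_2,s_2)$ in $\mathrm{epi}(g^{**})$ with $p=\tfrac12(q_1+q_2)$. Using $s_i\ge g^{**}(y_i)$ and convexity,
\[ v_0=\tfrac12(s_1+s_2)\ge \tfrac12\big(g^{**}(y_1)+g^{**}(y_2)\big)\ge g^{**}\big(\tfrac{y_1+y_2}{2}\big)=g^{**}(x)=v_0, \]
so every inequality is an equality; in particular $s_i=g^{**}(y_i)$, and equality in the midpoint convexity inequality forces $g^{**}$ to be affine on the whole segment $[y_1,y_2]$. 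Since $y_1\ne y_2$ (otherwise $q_1=q_2$), the midpoint $x$ lies in the relative interior of the segment, so with $\nu=(y_2-y_1)/\|y_2-y_1\|$ and a small $t_0>0$ the map $h(t)=g^{**}(x+t\nu)$ is affine on $(-t_0,t_0)$, which is the second alternative.

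Next I would treat the extreme case and show it forces $g(x)=g^{**}(x)$. The tool is Milman's theorem: in a locally convex Hausdorff space, every extreme point of the closed convex hull of a compact set belongs to that set. Since $\V$ is a separable Hilbert space I would run this in the weak topology on $\V\times\R$, where closed balls are weakly compact by reflexivity. Coercivity of $g^{**}$ makes its sublevel sets bounded (and $g^{**}$ bounded below), and since $g\ge g^{**}$ this bounds $\{g\le\alpha\}$ for every $\alpha$; together with weak closedness of $\mathrm{epi}(g)$ this makes the truncation $K=\mathrm{epi}(g)\cap\{s\le\alpha\}$ weakly compact. Choosing $\alpha>v_0$ and arguing that $p$ is an extreme point of the weakly compact convex hull generated by the capped data, Milman places $p$ back into $\mathrm{epi}(g)$ (which is weakly closed), i.e.\ $g(x)\le v_0=g^{**}(x)$; combined with $g\ge g^{**}$ this gives equality.

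The hard part is exactly this last compactness/truncation step. Milman's theorem cannot be applied to $\mathrm{epi}(g)$ directly since it is unbounded upward, so one must cap it at a level $\alpha>v_0$; the delicate point is to verify that $p$ is genuinely captured as an extreme point of the closed convex hull of the capped data (equivalently, that realizing the lower-boundary value $g^{**}(x)$ as a convex combination of epigraph points does not require points of arbitrarily large value). This is precisely the content of Br\o ndsted's extension of Milman's theorem to convex functions, which I would invoke directly, or reprove by the capping argument, using that the total weight carried by epigraph points of value exceeding $\alpha$ stays bounded away from $1$ when $v_0<\alpha$. By comparison, the epigraph reformulation and the affine bookkeeping in the non-extreme case are routine.
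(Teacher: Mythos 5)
Your proposal is correct and follows essentially the same route as the paper: both reduce the hard (extreme-point) case to Br\o ndsted's extension of Milman's theorem, made applicable via coercivity of $g^{**}$ and weak compactness of level sets (the paper's Theorem \ref{arne}), while in the non-extreme case the paper reads the affine segment straight off Br\o ndsted's definition of an extremal point and your midpoint argument simply re-derives that same equivalence. One caution: your fallback sketch for reproving Br\o ndsted's result by capping the epigraph at level $\alpha$ is not adequate as stated, because bounding the total weight carried by epigraph points with $s>\alpha$ away from $1$ does not control their contribution to the barycenter --- such points may sit at arbitrarily large norm (coercivity gives no quantitative rate relating $\|y\|$ to $g(y)$), so weight times displacement need not vanish and the capped hull may fail to capture $p$ by a naive mass estimate; this is exactly the nontrivial content of Br\o ndsted's theorem, and invoking it directly, as you primarily propose and as the paper does, is the right move.
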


We remark that both statements may hold simultaneously. The theorem should be considered in the light of that we may have $g^{**}(x)<g(x)$ and yet that the subdifferential of $g^{**}$ is empty. To prove Theorem \ref{brondsted} we recall some concepts from \cite{brondsted1966milman}. Given a convex function $f$ a point $x$ is called extremal if and only if $(x,f(x))$ is extremal for the epigraph of $f$, denoted $[f]$. Equivalently, $x$ is extremal if and only if $x\in\mathsf{dom}~f$ and $f$ is not affine on any relatively open segment containing $x$. Moreover $f_{ext}$ denotes the functional which equals $f(x)$ for all extremal points $x$ and $\infty$ else. As a consequence of Theorem 1 in \cite{brondsted1966milman} we have

\begin{theorem}\label{arne}
Let $g$ be a weakly l.s.c.~functional on a separable Hilbert space $\V$ such that $g^{**}$ is coercive, then $$[g^{**}_{ext}]\subset [g].$$
\end{theorem}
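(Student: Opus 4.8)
The plan is to read off the inclusion from Br\o ndsted's theorem once the problem is cast in epigraphical language. The key identity is that the epigraph of the l.s.c.~convex envelope is the closed convex hull of the epigraph of $g$, namely $[g^{**}]=\overline{\mathrm{conv}\,[g]}$, which is the epigraphical form of the Fenchel--Moreau theorem. Since $g$ is weakly l.s.c., its epigraph $[g]$ is weakly closed, and since $g^{**}$ is convex and l.s.c.~it is automatically weakly l.s.c.~as well; in particular here the weak and strong closures agree and $[g^{**}]$ is itself weakly closed.

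First I would recall the dictionary between the two notions of extremality: by the definition adopted above, $x$ is extremal for $g^{**}$ exactly when $(x,g^{**}(x))$ is an extreme point of the convex set $[g^{**}]$, equivalently when $g^{**}$ is affine on no relatively open segment through $x$. Geometrically the extreme points of an epigraph all sit on its graph, so the extreme points of $[g^{**}]$ are precisely the points $(x,g^{**}(x))$ with $x$ extremal, and $[g^{**}_{ext}]$ consists of these points together with everything lying vertically above them.

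Next I would invoke Br\o ndsted's Milman-type theorem, which asserts that every extreme point of $\overline{\mathrm{conv}\,[g]}$ lies in the closure $\overline{[g]}$ of the generating set. The coercivity hypothesis on $g^{**}$ is exactly what supplies the compactness a Milman argument needs: coercivity forces the lower level sets of $g^{**}$ to be bounded, and in a separable (hence reflexive) Hilbert space a bounded weakly closed set is weakly compact, so the extreme points can be localized in the weak topology despite the infinite-dimensional setting and the upward-unboundedness of the epigraph.

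Combining these, for every extremal $x$ we obtain $(x,g^{**}(x))\in\overline{[g]}=[g]$, that is $g(x)\le g^{**}(x)$; together with the universal inequality $g^{**}\le g$ this forces $g(x)=g^{**}(x)$. Hence every $(x,t)$ with $x$ extremal and $t\ge g^{**}(x)$ also satisfies $t\ge g(x)$, which is precisely the inclusion $[g^{**}_{ext}]\subset[g]$. The main obstacle I expect is the topological bookkeeping rather than the algebra: one must verify that Br\o ndsted's hypotheses genuinely hold in the weak topology of an infinite-dimensional Hilbert space, and in particular that coercivity together with reflexivity yields enough weak compactness to pin the extreme points down inside $[g]$ itself rather than in some strictly larger closure.
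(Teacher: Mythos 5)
Your proof is correct and follows essentially the same route as the paper: both rest on Br\o ndsted's Milman-type theorem applied in the weak topology of $\V$, with the identical verification of its inf-compactness hypothesis (coercivity of $g^{**}$ gives bounded, closed, convex level sets, hence weakly compact ones) and the identical use of weak lower semicontinuity of $g$ to identify $[g]$ with its weak closure (the paper phrases this as $g=g_{cl}$). Your extra epigraphical bookkeeping --- the identity $[g^{**}]=\overline{\mathrm{conv}}\,[g]$ and the observation that extreme points of an epigraph lie on the graph, so that the inclusion reduces to $g(x)=g^{**}(x)$ at extremal $x$ --- is exactly what is packaged inside the function-level statement $[f_{ext}]\subset[g_{cl}]$ that the paper cites directly, so the two arguments coincide in substance.
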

\begin{proof}
In the setting of \cite{brondsted1966milman} we let $E$ be the separable Hilbert space $\V$ with the weak topology. Since convex functionals are l.s.c.~with respect to the weak topology if and only if they are with respect to the norm topology, (see Theorem 9.1 \cite{bauschke2011convex}), it follows that the l.s.c~convex envelope of $g$ equals the weakly l.s.c.~convex envelope. In the notation of Theorem 1 of \cite{brondsted1966milman}, we can then take $f=g^{**}$ and the theorem states that $[f_{ext}]\subset [g_{cl}]$ where $g_{cl}$ is the greatest l.s.c.~minorant of $g$. Since $g$ is assumed to be l.s.c.~we have $g=g_{cl}$ and the desired inclusion follows. It remains to check that the conditions of Theorem 1 are fulfilled, which is that ``$g$ is inf-compact in some direction'' (with respect to the weak topology, referring to the terminology of \cite{brondsted1966milman}). For this it suffices to check that $g^{**}$ is inf-compact, i.e. that all level sets are bounded. The level sets of $g^{**}$ are closed and convex and since $g^{**}$ is assumed coercive they are also bounded. It follows that such level sets are compact in the weak topology, and the proof is complete.

\end{proof}

Based on this, we can now easily prove Theorem \ref{brondsted}.
\begin{proof}[Proof of Theorem \ref{brondsted}]
Since $g\geq g^{**}$, Theorem \ref{arne} clearly implies that $g(x)=g^{**}(x)$ for all extremal points $x$ for $g^{**}$. Consequently, if $g(x)= g^{**}(x)$ does not hold, then $x$ is not extremal for $g^{**}$ and the existence of $\nu$ follows by the definition of an extremal point for $g^{**}$.
%Let $x\in\Omega$ be fixed and let $P$ denote all convex subsets of $\V$ that contain $x$ and on which $g^{**}$ is affine. Note that $P$ is not empty by the above. It is easy to see that every chain in $P$ has a maximal element (given by the union of the elements in the chain), and thus Zorn's lemma states that $P$ has at least one maximal element. Suppose now that $S_1$ and $S_2$ are different maximal elements, and set $S=S_1\cup S_2$.
\end{proof}

Next, we discuss what the theorem implies about minimizers of $g$ versus $g^{**}$. Denote by $G$ the set of global minimizers of $g$ and by $G^{**}$ the set of global minimizers of $g^{**}$.
\begin{corollary}\label{corminimizers}
Let $g$ be a weakly l.s.c.~functional on a separable Hilbert space $\V$ such that $g^{**}$ is coercive. Then $G^{**}$ is a closed bounded convex set containing $G$. Letting $G^{**}_{ext}$ denote the extremal points of $G^{**}$, we also have that $G^{**}_{ext}\subset G$. Finally, the closed convex hull of $G^{**}_{ext}$ equals $G^{**}$.
\end{corollary}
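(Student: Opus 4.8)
The plan is to prove the four assertions in roughly the order stated, leaning on Theorem \ref{brondsted} and standard convex-analytic facts. First I would establish the structural properties of $G^{**}$. Since $g^{**}$ is l.s.c.~and convex, the global minimizers form the set where $g^{**}$ attains its infimum $m$; this is the sublevel set $\{x:~g^{**}(x)\leq m\}$, which is closed (by l.s.c.), convex (by convexity of $g^{**}$), and bounded (since $g^{**}$ is coercive, so its level sets are bounded). Thus $G^{**}$ is a closed bounded convex set. That $G\subset G^{**}$ follows from $g\geq g^{**}$ together with the fact that $g$ and $g^{**}$ share the same infimum: indeed $\inf g = \inf g^{**}$ because $g^{**}$ is the largest l.s.c.~convex minorant of $g$ and a constant function realizing $\inf g$ from below forces $\inf g^{**}\geq \inf g$, while $g^{**}\leq g$ gives the reverse inequality. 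Hence any $x\in G$ satisfies $g^{**}(x)\leq g(x)=\inf g=\inf g^{**}$, so $x\in G^{**}$.

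Next I would prove $G^{**}_{ext}\subset G$, which is the heart of the matter and where I expect the main obstacle to lie. The natural idea is: if $x\in G^{**}_{ext}$ is an extremal point of the set $G^{**}$, I want to conclude that $x$ is also an extremal point of the function $g^{**}$ in the sense of Theorem \ref{brondsted}, for then that theorem yields $g(x)=g^{**}(x)$, and since $x\in G^{**}$ already minimizes $g^{**}$ it would follow that $x$ minimizes $g$, i.e.~$x\in G$. The subtlety is bridging the two notions of ``extremal'': being an extreme point of the \emph{flat} convex set $G^{**}$ versus being a point at which $g^{**}$ is not affine on any open segment. On $G^{**}$ the function $g^{**}$ is constantly equal to $m$, hence trivially affine in every direction \emph{within} $G^{**}$; so an extreme point of $G^{**}$ need not automatically be a function-extremal point of $g^{**}$. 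The resolution I would pursue is to argue that if $x$ were \emph{not} function-extremal, then $g^{**}$ is affine on some open segment $(x-t_0\nu,\,x+t_0\nu)$; restricted to this segment $g^{**}$ is affine and bounded below by $m$ with value $m$ at the interior point $x$, which forces $g^{**}\equiv m$ on the whole segment, placing that open segment inside $G^{**}$ and contradicting extremality of $x$ in $G^{**}$. Therefore every extreme point of $G^{**}$ is function-extremal, and Theorem \ref{brondsted} applies to give $g(x)=g^{**}(x)$.

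Finally, the claim that $G^{**}$ equals the closed convex hull of $G^{**}_{ext}$ is a direct application of the Krein--Milman theorem: $G^{**}$ is a closed bounded convex subset of a Hilbert space, hence weakly compact (closed bounded convex sets are weakly compact in the reflexive space $\V$), and Krein--Milman asserts that such a set is the closed convex hull of its extreme points. I would simply cite this, noting that the extreme points of $G^{**}$ are precisely $G^{**}_{ext}$ by definition. The only point requiring care throughout is the consistent use of weak versus norm topology, but since $G^{**}$ is convex the two coincide for the relevant closure and lower-semicontinuity statements, exactly as exploited in the proof of Theorem \ref{arne}.
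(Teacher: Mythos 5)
Your proof is correct and follows essentially the same route as the paper's: the structural properties of $G^{**}$ from lower semi-continuity, convexity and coercivity; the inclusion $G^{**}_{ext}\subset G$ via the dichotomy of Theorem \ref{brondsted} combined with the observation that an affine function minimized at an interior point must be constant, which contradicts extremality; and the Krein--Milman theorem together with weak compactness of closed bounded convex sets for the final claim. The only cosmetic difference is that you rule out the affine alternative for an extreme point directly, whereas the paper argues by contradiction starting from $x_0\in G^{**}_{ext}\setminus G$ --- the underlying argument is identical.
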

\begin{proof}
The convexity of $G^{**}$ and the inclusion $G\subset G^{**}$ are immediate. The boundedness of $G^{**}$ follows since $g^{**}$ is coercive. Let $x$ be in the closure of $G^{**}$, and let $c$ be the value of the global minimum. Then $g^{**}(x)\leq c$ follows by l.s.c.~, and the reverse inequality is obvious from the fact that $c$ is a global minimum. It follows that $x\in G^{**}$ and hence $G^{**}$ is closed.

The existence of points in $G^{**}_{ext}$ and the statement concerning the closed convex hull are now immediate consequences of the Krein-Milman theorem (see e.g.~\cite{conway2013course}) and the fact that bounded closed convex sets are weakly compact in separable Hilbert spaces (Theorem 3.33, \cite{bauschke2011convex}). It remains to prove that $G^{**}_{ext}\subset G$. Let $x_0\in G^{**}_{ext}$ suppose $x_0\not \in G$. Then Theorem \ref{brondsted} implies the existence of a direction $\nu$ on which $g^{**}$ is constant near $x_0$, contradicting that $x_0$ is an extremal point.
\end{proof}

\begin{proof}[Proof of Theorem \ref{propconcave}]
Set $g(x)=f(x)+\frac{\gamma}{2}\|x\|^2$. By Theorem \ref{t1} we have $\S_\gamma^2(f)(x)+\frac{\gamma}{2}\|x\|^2=g^{**}(x),$ by which it is immediate that $g^{**}$ is coercive, (since $\S_\gamma^2(f)\geq 0$ by Proposition \ref{propprop}). It also follows that $g(x_0)>g^{**}(x_0)$ and hence Theorem \ref{brondsted} implies that a unit vector $\nu$ exists such that $t\mapsto \S_\gamma^2(f)(x+t\nu)$ equals an affine function minus $\frac{\gamma}{2}t^2$ in a neighborhood of $t=0$. The desired statement follows.
\end{proof}

\subsection{The $\S$-transform and semi-algebraicity}\label{semi}
We briefly treat semi-algebraicity of $\S^2_\gamma(f)$, since it was shown in \cite{attouch2013convergence} that this is sufficient for the forward backward splitting method to converge in the non-convex setting. We remind the reader that a function on a finite dimensional space is semi-algebraic if its graph is a semi-algebraic set \cite{bochnak2013real}, although we follow the convention in \cite{attouch2013convergence} of including in the definition functions that can take the value $\infty$. In this case the graph is defined to be $\{(x,f(x)): x\in\mathsf{dom}(f)\}$, so these functions are also semi-algebraic in the sense of \cite{bochnak2013real}.
\begin{theorem}\label{tf}
If $\V$ is finite dimensional and $f$ is semi-algebraic, then so is $\S_\gamma(f)$.
\end{theorem}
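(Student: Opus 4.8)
The plan is to realize $\S_\gamma(f)$ as a partial supremum of a semi-algebraic function and then invoke the Tarski--Seidenberg projection theorem, which guarantees that any set defined by a first-order formula built from semi-algebraic relations, real quantifiers and polynomial (in)equalities is again semi-algebraic. Fix an identification of $\V$ with $\R^n$ as real Hilbert spaces, where $n=\dim_\R\V$; then $\fro{x-y}$ is a genuine polynomial in the real coordinates of $(x,y)\in\R^n\times\R^n$.

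First I would check that the two-variable function
$$g(x,y):=-f(x)-\frac{\gamma}{2}\fro{x-y}$$
is semi-algebraic on $\R^n\times\R^n$. Its graph equals the projection onto the $(x,y,t)$-coordinates of
$$\{(x,s,y,t):~[(x,s)\in\text{graph}(f)]\wedge[t=-s-\tfrac{\gamma}{2}\fro{x-y}]\},$$
i.e.\ the intersection of the semi-algebraic set $\text{graph}(f)\times\R^{n+1}$ with the polynomial hypersurface $t=-s-\frac{\gamma}{2}\fro{x-y}$, projected to eliminate the auxiliary variable $s$. By Tarski--Seidenberg this projection is semi-algebraic, so $g$ is semi-algebraic.

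Next I would encode the partial supremum. Writing $h(y)=\S_\gamma(f)(y)=\sup_x g(x,y)$, the graph of $h$ is the set of $(y,t)$ satisfying the first-order formula
$$\big[\forall x~(g(x,y)\le t)\big]\wedge\big[\forall t'~(t'<t\Rightarrow \exists x~(g(x,y)>t'))\big].$$
The atomic relations $\{(x,y,t):g(x,y)\le t\}$ and $\{(x,y,t'):g(x,y)>t'\}$ are semi-algebraic because $g$ is, and the formula uses only real quantifiers together with conjunction and implication; hence by Tarski--Seidenberg the resulting set in $(y,t)$-space is semi-algebraic. Under the paper's convention this set is precisely the graph $\{(y,\S_\gamma(f)(y)):y\in\mathsf{dom}(\S_\gamma(f))\}$, since at a point where the supremum is $+\infty$ no finite $t$ can satisfy the upper-bound clause and such $y$ are correctly excluded. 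Therefore $\S_\gamma(f)$ is semi-algebraic. (When $f$ is $[0,\infty]$-valued one in fact has $\S_\gamma(f)\le 0$ everywhere by Proposition \ref{propprop}, so the domain is all of $\V$.)

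The main subtlety is the treatment of the extended value: where $f(x)=\infty$ one has $g(x,y)=-\infty$, so such $x$ contribute nothing to the supremum and may simply be discarded by restricting $x$ to the (semi-algebraic) set $\mathsf{dom}(f)$, the projection of $\text{graph}(f)$. I expect the only genuine care to be needed in writing ``$t=\sup_x g(x,y)$'' as a first-order formula that remains correct whether or not the supremum is attained; the least-upper-bound clause above handles this, after which the argument is a routine application of quantifier elimination.
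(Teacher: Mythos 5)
Your proposal is correct and follows essentially the same route as the paper: the paper also reduces the claim to the semi-algebraicity of a partial supremum of a semi-algebraic function of $(x,y)$ (there phrased via the Legendre transform of $f+\frac{\gamma}{2}\fro{\cdot}$ and the identity \eqref{y6}, citing the quantifier-elimination argument following Theorem 2.2 in \cite{attouch2013convergence}). Your write-up merely completes the square to work with $-f(x)-\frac{\gamma}{2}\fro{x-y}$ directly and spells out the first-order formula that the paper outsources to that reference, including the correct handling of the extended values and of suprema that are not attained.
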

\begin{proof}
We assume for simplicity that $\gamma=1$. It is a consequence of the Tarski-Seidenberg theorem that the set of semi-algebraic functions is closed under addition (see e.g.~Prop.~2.2.6 in \cite{bochnak2013real}), and similarly one can prove that the epigraph of a semi-algebraic function is a semi-algebraic set. If $f$ is semi-algebraic on $\R^n$, it follows that $g(x,y)=\scal{x,y}-(f(x)+\frac{1}{2}\fro{x})$ is semi-algebraic on $\R^{2n}$, and by the argument following Theorem 2.2 in \cite{attouch2013convergence} it follows that the Legendre transform of $f+\frac{1}{2}\fro{x}$ is semi-algebraic. The desired result now follows since this function minus $\frac{\gamma}{2}\fro{y}$ equals $\S_\gamma(f)(y)$ by \eqref{defS}.
\end{proof}
Finally we remark that all functionals (that operate on finite-dimensional spaces) in Section \ref{ex} are semi-algebraic, and in particular this then holds for $\S^2_\gamma(\rank)$. To see this, note that $x\mapsto \|x\|_0$ is semi-algebraic, and that $\rank(X)=\|\sigma(X)\|_0$ where $\sigma(X)$ is the vector of singular values. Moreover, the singular values of a matrix is a vector-valued semi-algebraic function. Since semi-algebraic functions are closed under composition, it follows that the rank functional is semi-algebraic.

\section{Part II; applications with additional priors}
\subsection{Minimization over convex subsets}\label{secap1}
\begin{figure}
\centering
\includegraphics[width=0.8\linewidth]{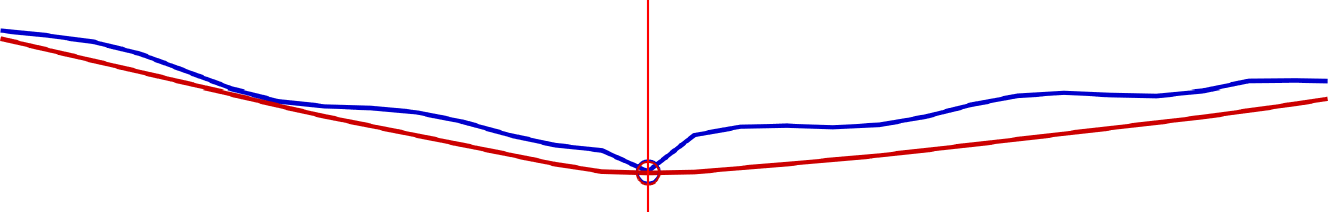}\\
\includegraphics[width=0.48\linewidth]{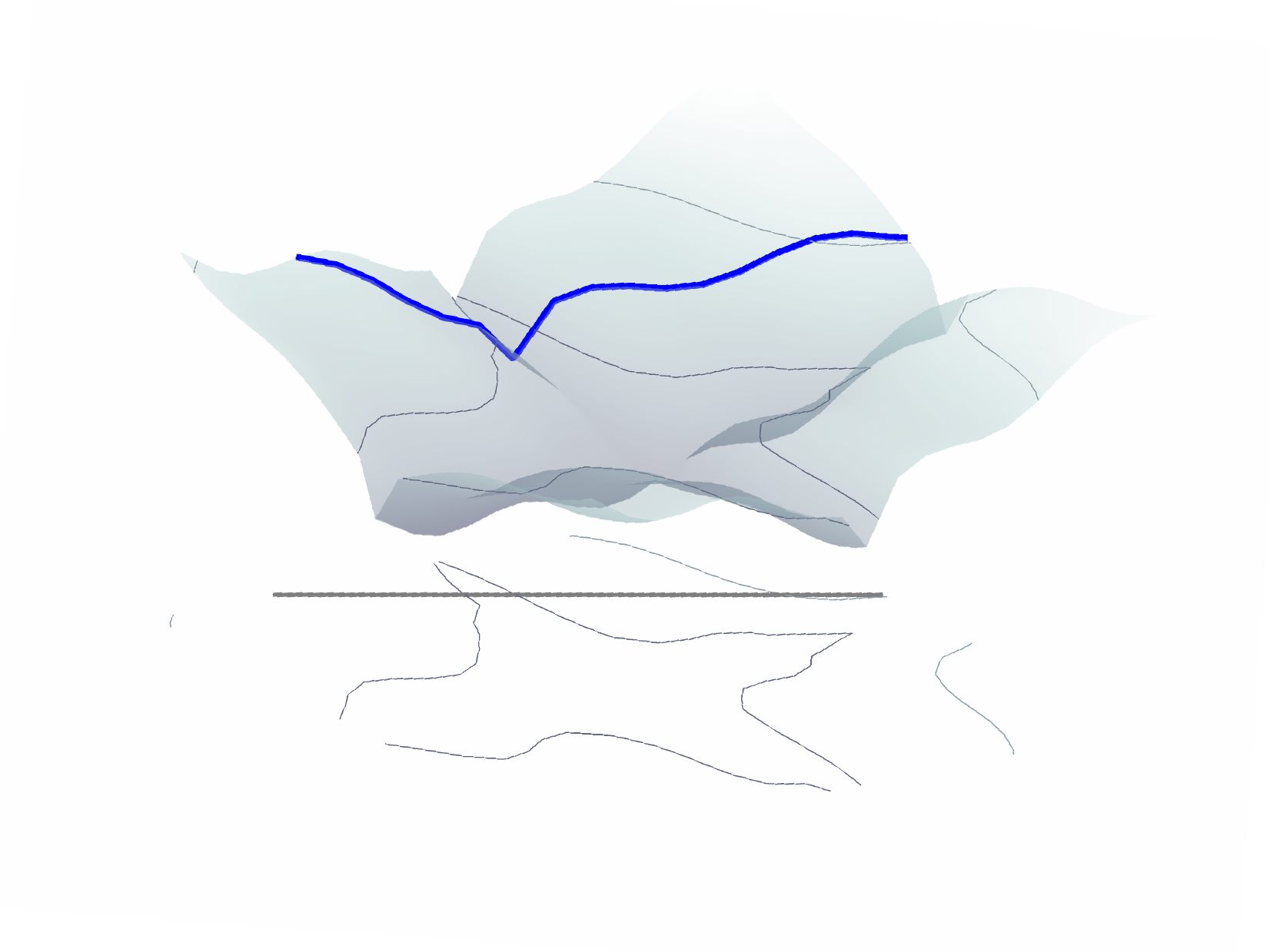}
\includegraphics[width=0.48\linewidth]{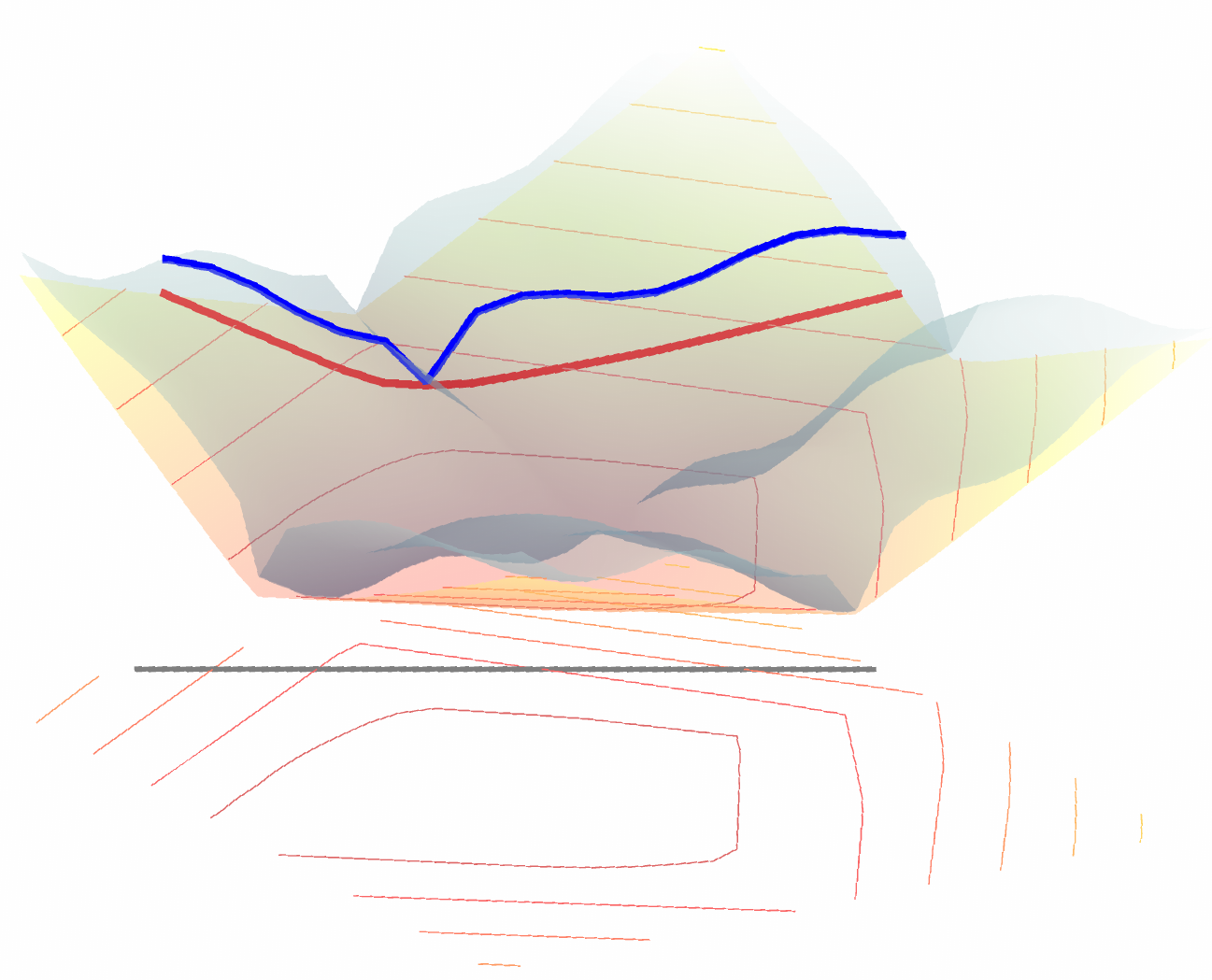}
\caption{Same setup as in Figure \ref{fig2intro}, but with a different subspace. The left graph illustrates \eqref{priorproblem1} (with $c\equiv 0$) and the right graph \eqref{restrictedproblem4}. Technically, $c$ is the indicator function of the subspace (think grey line), and the restriction of the respective functionals to this subspace is shown on top. We see that both \eqref{priorproblem1} and \eqref{restrictedproblem4} yield the same solution.}
\label{fig3}
\end{figure}

Let $c$ be a convex functional on $\V$ incorporating prior information known about the problem in question, and suppose we wish to minimize
\begin{equation}\label{priorproblem1}\argmin_{x} f(x)+\frac{1}{2}\|x-d\|^2+c(x).\end{equation}
For concrete examples of this form we refer e.g.~to the overview article \cite{tseng2010approximation}, or Section 4 of \cite{larsson2016convex} which contains applications to structure from motion and system identification. In particular, we can take as $c$ the indicator function of some closed convex subset $\H$ of $\V$, i.e. $c=\iota_{\H}$.

We suppose now that \eqref{priorproblem1} does not have a closed form solution, and consider replacing it by
\begin{equation}\label{restrictedproblem4}\argmin_{x} \S_\gamma^2(f)(x)+\frac{1}{2}\|x-d\|^2+c(x)\end{equation} to obtain a strongly convex problem (for $\gamma< 1$, just convex if $\gamma=1$). We warn that although $\S_1^2(f)(x)+\frac{1}{2}\|x-d\|^2$ is the l.s.c.~convex envelope of $f(x)+\frac{1}{2}\|x-d\|^2$ (by Theorem \ref{t1}), it is usually not true that the functional in \eqref{restrictedproblem4} is the l.s.c.~convex envelope of \eqref{priorproblem1}. Hence \eqref{restrictedproblem4} is a different problem with possibly a different answer. However, one of the key points of this section is that it often happens that they do have the same solution, and it is easy to see that this happens precisely when \begin{equation}\label{gre}f(\hat x)=\S_\gamma^2(f)(\hat x)\end{equation} holds for the solution $\hat x$ of \eqref{restrictedproblem4} (Proposition \ref{y66}) which is easily verified if a concrete expression for $\S^2_\gamma(f)$ is available. This is highlighted in Figure \ref{fig2intro} (where the two problems have a slightly different solution) and Figure \ref{fig3} (where the two problems have the same solution).

As mentioned in the introduction, the above suggested relaxation should be compared with ``traditional'' ones like for instance the nuclear norm, in the case where $f$ equals the rank of a matrix. For more details on the relation between original vs.~relaxed problem in this case, see e.g.~the discussion in \cite{tseng2010approximation}. The main conclusions of this section read as follows, (we refer to \cite{bauschke2011convex} for definitions of strongly convex and supercoercive).

\begin{proposition}\label{y66}
Let $f$ be a $[0,\infty]$-valued functional on a separable Hilbert space $\V$, and let $c\geq 0$ be a l.s.c.~convex function such that $\mathsf{dom} f\cap \mathsf{dom} c\neq \emptyset$. Given $\gamma<1$, the functional in \eqref{restrictedproblem4} is strongly convex and supercoercive. The solution is thus a unique point $\hat x$, which solves \eqref{priorproblem1} whenever $\S_{\gamma}^2(f)(\hat x)=f(\hat x)$.
\end{proposition}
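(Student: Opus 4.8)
The plan is to verify three things about the functional in \eqref{restrictedproblem4}, namely $g(x):=\S_\gamma^2(f)(x)+\frac{1}{2}\|x-d\|^2+c(x)$: that it is strongly convex, that it is supercoercive, and finally that its unique minimizer $\hat x$ solves \eqref{priorproblem1} under the stated condition \eqref{gre}. For the first two properties I would decompose $g$ into pieces whose convexity and growth are already understood. By Theorem \ref{t1} the sum of the first two terms, $\S_\gamma^2(f)(x)+\frac{\gamma}{2}\|x-d\|^2$, is the l.s.c.~convex envelope of $f(x)+\frac{\gamma}{2}\|x-d\|^2$ and in particular is l.s.c.~convex. The remaining quadratic mass $\frac{1-\gamma}{2}\|x-d\|^2$ is strictly positive since $\gamma<1$, and it is a standard fact that adding a positive multiple of $\|x-d\|^2$ to a convex functional yields a strongly convex one; together with the convex term $c$ this makes $g$ strongly convex.

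For supercoercivity I would again exploit the splitting. Since $\S_\gamma^2(f)\geq 0$ (Proposition \ref{propprop}) and $c\geq 0$ by hypothesis, we have the lower bound $g(x)\geq \frac{1}{2}\|x-d\|^2$, so $g(x)/\|x\|\to\infty$ as $\|x\|\to\infty$, which is precisely supercoercivity. Strong convexity plus lower semi-continuity (all three summands are l.s.c., using Proposition \ref{propprop} for the $\S$-term) plus $\mathsf{dom} f\cap\mathsf{dom} c\neq\emptyset$ guarantees the existence of a unique minimizer $\hat x$; this is the routine existence-and-uniqueness result for strongly convex l.s.c.~functionals on a Hilbert space, for which I would cite \cite{bauschke2011convex}.

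The only step requiring genuine argument is the last one, and I expect it to be the crux of the proof. Write $F(x)=f(x)+\frac{1}{2}\|x-d\|^2+c(x)$ for the original objective in \eqref{priorproblem1} and $g$ as above for the relaxed one. Since $\S_\gamma^2(f)\leq f$ by Theorem \ref{t1}, we have $g\leq F$ pointwise, so $g(\hat x)\leq F(x)$ for every $x$ once we know $g(\hat x)\leq g(x)\leq F(x)$. Now suppose the equality condition \eqref{gre}, $\S_\gamma^2(f)(\hat x)=f(\hat x)$, holds at the minimizer; then $g(\hat x)=F(\hat x)$, and combining the two facts gives $F(\hat x)=g(\hat x)\leq F(x)$ for all $x$, so $\hat x$ minimizes $F$ as well. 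The subtlety I would be careful about is the case $\gamma=1$ versus $\gamma<1$: the inequality $\S_\gamma^2(f)\leq f$ and the identity $g\leq F$ hold for every $\gamma$, so the minimization-transfer argument goes through uniformly, but uniqueness of $\hat x$ genuinely uses $\gamma<1$ (strong convexity fails at $\gamma=1$), which is why the statement pins down a \emph{unique} point only in that regime. The comparison $g\le F$ combined with the touching condition is the whole content of the implication, and once the pointwise inequality $\S_\gamma^2(f)\le f$ is in hand the rest is a one-line sandwich.
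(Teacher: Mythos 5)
Your proposal is correct and takes essentially the same route as the paper's own proof: the same decomposition into the l.s.c.~convex envelope $\S_\gamma^2(f)(x)+\frac{\gamma}{2}\|x-d\|^2$ from Theorem \ref{t1} plus the strongly convex remainder $\frac{1-\gamma}{2}\|x-d\|^2$, the same lower bound $\frac{1}{2}\|x-d\|^2$ for supercoercivity, the standard existence/uniqueness result from \cite{bauschke2011convex}, and the same sandwich argument via $\S_\gamma^2(f)\leq f$ for the final implication. No gaps.
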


\begin{proof}
By Theorem \ref{t1}, the functional $\S_{\gamma}^2(f)(x)+\frac{\gamma}{2}\|x-d\|^2+c(x)$ is l.s.c.~convex (but not necessarily strictly convex), and hence the functional in \eqref{restrictedproblem4} (obtained by adding $\frac{1-\gamma}{2}\|x-d\|^2$) is l.s.c.~and strongly convex. Supercoercivity is obvious due to the term $\frac{1}{2}\|x-d\|^2$ since $\S_{\gamma}^2(f)\geq 0$. Corollary 11.16 in \cite{bauschke2011convex} applied to the functional $\S_{\gamma}^2(f)(x)+\frac{1}{2}\|x-d\|^2+c(x)$, shows that \eqref{restrictedproblem4} has a unique minimizer $\hat x$. Since $f\geq \S_{\gamma}^2(f)$, it follows that $\hat x$ solves \eqref{priorproblem1} under the assumption that $\S_{\gamma}^2(f)(\hat x)=f(\hat x)$.
\end{proof}

\begin{ex}\label{tuesday}
Returning to Example \ref{exweights1}, suppose the so called ``model order'' $M$ is known, i.e.~we know beforehand the desired rank of the Hankel matrix sought. As in Example \ref{ale}, let $\mathcal{M}_M\subset\m_{n,n}$ be the manifold of matrices of rank $\leq M$ and let $\H$ be the linear subspace of all Hankel matrices. Given data $d$ which we want to approximate with at most $M$ exponential functions, a simple idea is to set $X_0=H_d$ and alternatingly project between $\H$ and $\mathcal{M}_M$ (this rationale is explain in \eqref{ale2} below). This goes back (at least) to \cite{cadzow1988signal}, and is sometimes known as Cadzow's algorithm. Local convergence results were first established in \cite{lewis2008alternating} and stronger results in the same spirit were given in \cite{andersson2013alternating}. In either case, if this converges there is no guarantee that it will converge to the optimal point, i.e. the solution to
\begin{equation}\label{ale1}\argmin_{X\in\H} \iota_{\mathcal{M}_M}(X)+\frac{1}{2}\|X-H_d\|_{\m_{n,n}}^2.\end{equation}
Based on Example \ref{ale} we can now compute the l.s.c. convex envelope of the functional in \eqref{ale1}, and apply convex optimization routines to find a global minimizer. As long as $X$ has distinct singular values, we have $\iota_{\mathcal{M}_M}(X)=\S_\gamma^2(\iota_{\mathcal{M}_M})(X)$ if and only if $\rank (X)\leq M$ (see Theorem 2 in \cite{andersson2016convex}), and hence a solution to the original problem is found if these conditions are met for the minimizer. Otherwise, the algorithm is still likely to yield a low rank approximation of the optimal point.

As a final remark, based on Kronecker's theorem and \eqref{otherweight}, one can show that \eqref{ale1} is equivalent (with the exception of some degenerate cases, c.f. \cite{andersson2016structure}) to the following:
\begin{equation}\label{ale2}\argmin_{f_j=\sum_{k=1}^M c_k e^{\zeta_k j}} \sum_{j=1}^{2n-1}\omega_j|f_j-d_j|^2\end{equation} for $\zeta_1,\ldots,\zeta_M\in\C$ and  $c_1,\ldots,c_M\in\C$, where $\omega_j$ is the triangle weight \eqref{triangleweight}. If we are interested in minimization over the standard (flat) $\ell^2$-norm, we may instead consider
\begin{equation}\label{ale3}\argmin_{X\in\H} \iota_{\mathcal{M}_M}(X)+\frac{1}{2}\|X-H_d\|_{\m_{n,n}^W}^2\end{equation} with $W$ as in Example \ref{exweights1}, which amounts to minimizing \eqref{ale2} with the weight seen to the right in Figure \ref{figweights}. With the same argument as in Example \ref{exweights1}, we have that the l.s.c. convex envelope of the functional in \eqref{ale3} is given by $$\S^2(\iota_{\mathcal{M}_M})(I_{\sqrt{u}}XI_{\sqrt{v}})+\frac{1}{2}\|X-H_d\|_{\m_{n,n}^W}^2.$$
\end{ex}

Several algorithms can be used to solve \eqref{restrictedproblem4}. One may use ADMM as in \cite{larsson2016convex}, or forward-backward splitting (see e.g.~\cite{combettes2005signal}). These algorithms have in common that one needs to be able to compute the proximal operator of $\S^2_\gamma(f)$, which we discuss next.

%We will study more complicated scenarios of this type in a sequel article, i.e. cases like when the quadratic term is of the form $\frac{1}{2}\|T(x)-d\|_{\V}^2$ for some linear operator $T$, which arise e.g. in signal processing if $x$ is an equally spaced signal and $d$ is data measured at unequally spaced points. The remainder of this article focuses on the case when $\S(f)$ is not explicitly computable.

\subsection{The proximal operator}\label{prox}

To solve problem \eqref{restrictedproblem4} by either ADMM or FBS, we need to compute the proximal operator, i.e.
\begin{equation}\label{66}\prox_{\S_{\gamma}^2(f)/\rho}(y)=\argmin_{x} \S_{\gamma}^2(f)(x)+\frac{\rho}{2}\|x-y\|^2\end{equation}
for $\rho>\gamma$. Obviously, if one has a concrete expression for $\S_{\gamma}^2(f)$ it may be possible to compute \eqref{66} directly. However, we shall see in this section that \eqref{66} is computable even if we only have an expression for $\S_\gamma(f)$. In fact, even when both options are available, they may lead to different methods for the evaluation of $\prox$. A concrete example of this concerns the functional in Example \ref{exfixedK}, where $\S_{\gamma}(f)$ has a very simple expression and $\S_{\gamma}^2(f)$ has a very complicated one.

\begin{proposition}\label{propprox}
For $\rho>\gamma$ and $z=\prox_{\frac{\rho-\gamma}{\rho\gamma}\S_{\gamma}(f)}(y)$ we have
$$\prox_{\S_{\gamma}^2(f)/\rho}(y)=\frac{\rho y-\gamma z}{\rho-\gamma}.$$
\end{proposition}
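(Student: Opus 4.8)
The plan is to unfold the proximal operator of $\S_\gamma^2(f)$ into a nested optimization, and then recognize the inner problem as the proximal operator of $\S_\gamma(f)$. Recall from \eqref{defS} that $\S_\gamma^2(f) = \S_\gamma(\S_\gamma(f))$, and that $\S_\gamma(g)(x) = \sup_w -g(w) - \frac{\gamma}{2}\|x-w\|^2$. Writing out the definition \eqref{66}, I would first substitute $g = \S_\gamma(f)$ to obtain
\begin{equation}
\prox_{\S_\gamma^2(f)/\rho}(y) = \argmin_x \para{\sup_w -\S_\gamma(f)(w) - \frac{\gamma}{2}\|x-w\|^2} + \frac{\rho}{2}\|x-y\|^2.
\end{equation}
The goal is to swap the inner $\sup_w$ with the outer $\argmin_x$ so that the $x$-minimization becomes an unconstrained quadratic, while the $w$-optimization reorganizes into a proximal operator of $\S_\gamma(f)$ at the prescribed scale $\frac{\rho-\gamma}{\rho\gamma}$.

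The key algebraic step is to expand the quadratic $-\frac{\gamma}{2}\|x-w\|^2 + \frac{\rho}{2}\|x-y\|^2$ in $x$ and complete the square. Since $\rho>\gamma$, the coefficient of $\|x\|^2$ is $\frac{\rho-\gamma}{2}>0$, so for fixed $w$ the $x$-minimization is a strictly convex quadratic solved explicitly at
\begin{equation}
x^*(w) = \frac{\rho y - \gamma w}{\rho-\gamma}.
\end{equation}
I would then substitute $x^*(w)$ back in. After collecting the remaining $w$-dependent terms, the residual objective in $w$ should, up to constants not depending on $w$, reduce to $-\S_\gamma(f)(w) - \frac{\gamma}{2}\cdot\frac{\rho-\gamma}{\rho}\cdot\text{(something quadratic in }w-y)$, which is precisely the objective whose maximizer defines $\prox_{\frac{\rho-\gamma}{\rho\gamma}\S_\gamma(f)}(y)$ once the sign is flipped (recall $\S_\gamma$ is a negated Moreau envelope, so a $\sup$ over $-\S_\gamma(f)$ matches a $\prox$ of the convex functional $\frac{\rho-\gamma}{\rho\gamma}\S_\gamma(f)$). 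Identifying the optimal $w$ as $z$ and plugging into $x^*(w)$ yields the claimed formula $\frac{\rho y - \gamma z}{\rho-\gamma}$.

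The main obstacle is justifying the interchange of the outer infimum (over $x$) and the inner supremum (over $w$). Because the combined objective is convex in $x$ and concave in $w$, and the $x$-minimization is coercive (strongly convex since $\rho>\gamma$), I expect a standard minimax/Sion-type argument, or direct appeal to the properness and coercivity established in Proposition \ref{propprop}, to license the swap; the finite-valuedness and continuity of $\S_\gamma(f)$ from Proposition \ref{propprop} will be useful here. The secondary bookkeeping obstacle is getting the scaling constant $\frac{\rho-\gamma}{\rho\gamma}$ exactly right when matching the residual quadratic in $w$ to the definition of the proximal operator; this requires careful tracking of the factor produced by completing the square, but is otherwise routine.
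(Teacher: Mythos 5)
Your computations are sound, but your route is genuinely different from the paper's. The paper proves the identity as a scaled Moreau decomposition: it absorbs the quadratic to form the convex function $\S_{\gamma}^2(f)+\frac{\gamma}{2}\fro{\cdot}$, rewrites $\prox_{\S_{\gamma}^2(f)/\rho}(y)$ as the prox of $\frac{1}{\rho-\gamma}$ times this function at the shifted point $\frac{\rho}{\rho-\gamma}y$, applies Moreau's identity, and evaluates the resulting conjugate prox as $\gamma z$ using Theorem \ref{t1} (with $d=0$) together with a prox-rescaling identity. You instead unfold $\S_\gamma^2=\S_\gamma\circ\S_\gamma$ via \eqref{defS} into a min--max, eliminate $x$ explicitly --- your $x^*(w)=\frac{\rho y-\gamma w}{\rho-\gamma}$ is correct --- and match the reduced $w$-problem to the prox defining $z$. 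One bookkeeping slip: substituting $x^*(w)$ gives the reduced objective $-\S_\gamma(f)(w)-\frac{\rho\gamma}{2(\rho-\gamma)}\fro{w-y}$, i.e.\ coefficient $\frac{\gamma}{2}\cdot\frac{\rho}{\rho-\gamma}$, the reciprocal of the factor $\frac{\gamma}{2}\cdot\frac{\rho-\gamma}{\rho}$ you wrote; this is exactly what matches the scale $\frac{\rho-\gamma}{\rho\gamma}$ in the statement, so your final identification is right. Also, $\frac{\rho-\gamma}{\rho\gamma}\S_\gamma(f)$ is \emph{not} convex (see \eqref{14}); what makes $z$ well defined and unique is that, after rescaling, its prox objective equals $g(w)+\frac{\gamma^2}{2(\rho-\gamma)}\fro{w}-\frac{\rho\gamma}{\rho-\gamma}\scal{w,y}$ plus a constant, where $g:=\S_\gamma(f)+\frac{\gamma}{2}\fro{\cdot}$ is convex and l.s.c.\ by \eqref{y6} --- this strong convexity is precisely where $\rho>\gamma$ enters.

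The one genuine gap is the interchange of $\inf_x$ and $\sup_w$: a Sion-type theorem does not apply as invoked, since neither variable ranges over a compact set and $\V$ may be infinite dimensional (Proposition \ref{propprop} gives continuity of $\S_\gamma(f)$, not any compactness). Fortunately you can bypass minimax theory entirely by verifying a saddle point. Let $\Phi(x,w)=-\S_\gamma(f)(w)-\frac{\gamma}{2}\fro{x-w}+\frac{\rho}{2}\fro{x-y}$, so that $F(x):=\S_\gamma^2(f)(x)+\frac{\rho}{2}\fro{x-y}=\sup_w\Phi(x,w)$ by \eqref{defS}, and set $x^*:=\frac{\rho y-\gamma z}{\rho-\gamma}$. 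For every $x$ one has $F(x)\geq\Phi(x,z)\geq\Phi(x^*,z)$, since $\rho>\gamma$ makes $\Phi(\cdot,z)$ a strongly convex quadratic minimized at $x^*=x^*(z)$. It therefore suffices to show $F(x^*)=\Phi(x^*,z)$, i.e.\ that $z$ attains $\sup_w\Phi(x^*,w)$, which after completing the square is equivalent to $z$ minimizing the convex function $w\mapsto g(w)-\gamma\scal{x^*,w}$, i.e.\ to $\gamma x^*\in\partial g(z)$. But the optimality condition defining $z$ is $0\in\partial g(z)-\gamma z+\frac{\rho\gamma}{\rho-\gamma}(z-y)$, which rearranges to $\gamma\,\frac{\rho y-\gamma z}{\rho-\gamma}=\gamma x^*\in\partial g(z)$, exactly as required. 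Since $F$ equals the convex function $\S_\gamma^2(f)+\frac{\gamma}{2}\fro{\cdot}$ plus a strongly convex quadratic (Theorem \ref{t1}), its minimizer is unique, so $\prox_{\S_{\gamma}^2(f)/\rho}(y)=x^*$. With this patch your argument is complete, more elementary than the paper's (no conjugacy is needed), whereas the paper's Moreau-decomposition proof avoids the interchange question altogether.
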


\begin{proof}

The proof is a slight alteration of the classical Moreau decomposition (see \cite{parikh2014proximal} Sec. 2.5 or Theorem 14.3(ii) in \cite{bauschke2011convex}).
Since $$\argmin_x \S_{\gamma}^2(f)(x)+\frac{\rho}{2}\|x-y\|^2=\argmin_x\para{\S_{\gamma}^2(f)(x)+\frac{\gamma}{2}\|x\|^2}+\frac{\rho-\gamma}{2}\fro{x-\frac{\rho}{\rho-\gamma}y}$$
it follows that $\prox_{\S_{\gamma}^2(f)/\rho}(y)=\prox_{\para{\frac{1}{\rho-\gamma}\para{\S_{\gamma}^2(f)(x)+\frac{\gamma}{2}\|x\|^2}}}\para{\frac{\rho}{\rho-\gamma}y}$ which by the Moreau decomposition equals $$\frac{\rho}{\rho-\gamma}y-\frac{1}{\rho-\gamma}\prox_{(\rho-\gamma)\para{\S_{\gamma}^2(f)(x)+\frac{\gamma}{2}\|x\|^2}^*}(\rho y),$$ so it suffices to show that the latter proximal operator equals $\gamma z$. Note that
$$(\rho-\gamma)\para{\S_{\gamma}^2(f)(x)+\frac{\gamma}{2}\|x\|^2}^*(\cdot)=(\rho-\gamma)\para{\S_\gamma({f})(\frac{x}{\gamma})+\frac{\gamma}{2}\|\frac{x}{\gamma}\|^2}$$ by Theorem \ref{t1} (applied with $d=0$). Using the identity $\prox_{g\para{\frac{\cdot}{\gamma}}}(y)=\gamma \prox_{\frac{f}{\gamma^2}}\para{\frac{y}{\gamma}}$ the proximal operator $\prox_{(\rho-\gamma)\para{\S_{\gamma}^2(f)(x)+\frac{\gamma}{2}\|x\|^2}^*}(\rho y)$ becomes \begin{align*}&\gamma \prox_{\frac{\rho-\gamma}{\gamma^2}\para{\S_\gamma({f})(x)+\frac{\gamma}{2}\|x\|^2}}\para{\frac{\rho y}{\gamma} }=\gamma\argmin_x \frac{\rho-\gamma}{\gamma^2}\para{\S_\gamma({f})(x)+\frac{\gamma}{2}\fro{x}}+\frac{1}{2}\fro{x-\frac{\rho y}{\gamma}}=\\&
\gamma\argmin_x \frac{\rho-\gamma}{\gamma^2}\S_\gamma({f})(x)+\frac{\rho}{2\gamma}\fro{x}-\frac{\rho}{\gamma}\scal{x,y}=\gamma\argmin_x \frac{\rho-\gamma}{\gamma^2}\S_\gamma(f)(x)+\frac{\rho}{2\gamma}\fro{x-y}=\\&\gamma\argmin_x \frac{\rho-\gamma}{\rho\gamma}\S_\gamma({f})(x)+\frac{1}{2}\fro{x-y}=\gamma\prox_{ \frac{\rho-\gamma}{\rho\gamma}\S_\gamma(f)}=\gamma z,\end{align*}
as desired.

%We have
%\begin{align*}&\label{663}\p_{\rho,\S^2(f)}(d)=\argmin_{x}\S^2(f)(x)+\frac{1+\rho}{2}\fro{x-d}=\\& \argmin_{x}\max_y-\S(f)(y)-\frac{1}{2}\fro{x-y}+\frac{1+\rho}{2}\fro{x-d}=\\
%&\argmin_{x}\max_y\frac{\rho}{2}\fro{x-\frac{(1+\rho)d-y}{\rho}}-\S(f)(y)-\frac{1}{2}\para{1+\frac{1}{\rho}}\fro{y-d} \end{align*}
%By Sion's minimax theorem \cite{sion1958general}, we can switch the order of $\max$ and $\min$. This shows that the desired $x$ is given by $\frac{(1+\rho)d-y}{\rho}$, where \begin{equation*}\begin{aligned}&y=\argmax_{y}-\S(f)(y)-\frac{1}{2}\para{1+\frac{1}{\rho}}\fro{y-d}=\\&\argmin_{y}\S(f)(y)+\frac{1}{2}\para{1+\frac{1}{\rho}}\fro{y-d}=\p_{1/\rho,\S(f)}(d),
%\end{aligned}\end{equation*} which combined gives the sought identity.
\end{proof}

\section{Part III; quadratic terms of the form $\|Ax-d\|^2$}
\subsection{Motivation and examples}\label{secex2}
$\S(f)$ is explicitly computable whenever \begin{equation*}\label{gt}\argmin_{x} f(x)+\frac{1}{2}\|x-d\|^2\end{equation*} has explicit solutions for all $d\in\V$, making the unconstrained problem rather uninteresting. However, for the problem
\begin{equation}\label{17}\|x\|_0+\frac{1}{2}\fro{Ax-d}_2,\quad x\in\C^n,\end{equation}
the key objective is simply finding the global minimizer. The remainder of the paper is devoted to the study of such cases. We henceforth consider \begin{equation}\label{t4}\J(x)=f(x)+\frac{1}{2}\fro{A x-d}_\W,\quad x\in \V\end{equation}
where $\V,\W$ are possibly different (separable) Hilbert spaces and $A:\V\rightarrow\W$ is linear and bounded. %To see this, let $A$ have singular value decomposition $U\Sigma V^*$ and let $v_j$ be the columns of $V$, then we can define $\V$ as $\C^n$ with the norm
%\begin{equation}\label{Vnorm}\fro{x}_{\V}=\sum_{j=1}^{n} \sigma^2_j|\scal{x,v_j}|^2,\end{equation} where $(\sigma_j)_{j=1}^n$ are the singular values.
%\begin{proposition}\label{cor1}
%The l.s.c. convex envelope of \eqref{t4} is \begin{equation}\label{t4conv} \S\big(\S(f)\big)(x)+\frac{1}{2}\fro{A x-d}_2,\quad x\in \R^n\end{equation}
%\end{proposition}
%\begin{proof}
%Decompose $d$ as $A d_1 +d_2$ where $d_2\in(\mathsf{Ran} A)^\perp$ (the orthogonal complement of the range), and note that $$\fro{A x-d}_2=\fro{A(x-d)}_2+\fro{d_2}_2=\fro{x-d}_\V+\fro{d_2}_2.$$ Hence, defining $\V$ as in \eqref{Vnorm}, we have that \eqref{t4}
%equals
%\begin{equation}\label{t3}f(x)+\frac{1}{2}\fro{x-d_1}_\V+\frac{1}{2}\fro{d_2}_2,\quad x\in \C^n.\end{equation}
%The desired conclusion is now immediate from Theorem \ref{t1}.
%\end{proof}
We point out that, in case $A$ is bounded from below, we may introduce a new Hilbert space ${\V_A}$, which equals $\V$ as a vector space but with the new norm $\|x\|_{{\V_A}}=\|Ax\|_{\W}$, and then ``compute'' the l.s.c. convex envelope of \eqref{17} by applying $S_{\V_A}$ twice to $f$. In case $A$ is not bounded from below, ${\V_A}$ is only a semi-normed space which may not be complete, but we could still develop a theory similar to that in Part I. However, the problem arise since $S_{{\V_A}}(f)$ usually has no explicit formula, and hence the theory becomes vacuous. Moreover, when $A$ has a kernel, $f$ is bounded and $f(0)=0$, it is easy to see that $S_{{\V_A}}^2(f)(x)= 0$ for all $x$ in the kernel of $A$, and hence the convex envelope is not a desirable functional for solving e.g. \eqref{17}. In the particular case of problem \eqref{17}, a very interesting idea to cope with this problem is suggested in \cite{selesnick2017sparse}.

Our aim here is to develop strategies to deal with the general problem \eqref{t4}, in the case when $f$ is an $[0,\infty]$-valued functional such that $S_{\V,\gamma}(f)=\S_\gamma(f)$ is computable, and focus on computing (explicit) approximations of the l.s.c convex envelope of $\J$. The remaining theory is split in two cases, either we approximate the convex envelope from below by a convex functional, or we approximate it from above with a non-convex functional having a number of desirable properties, most notably continuity and the fact that local minimizers do not change. More precisely, we will study the relationship between the original problem \eqref{t4} and the modified problem
\begin{equation}\label{t4mod}\J_\gamma(x)=\S_{\gamma}^2(f)(x)+\frac{1}{2}\fro{A x-d}_\W,\quad x\in \V\end{equation}
under the assumption that $\gamma I\geq A^*A$ (case 1) or $\gamma I\leq A^*A$ (case 2). We now provide one example which highlight the two possibilities of choosing $\gamma$.

\begin{ex} Let $W\in\m_{m,n}$ be strictly positive and recall that $\m_{m,n}^W$ is equipped with the norm $$\fro{X}_{W}=\sum_{i,j}w_{i,j}|x_{i,j}|^2$$ (see the text preceding Example \ref{exmatrixes}). Suppose we are interested in the l.s.c. convex envelope of the non-convex functional \begin{equation}\label{rank2}\rank(X)+\frac{1}{2}\fro{X-D}_W.\end{equation} Note that \eqref{rank2} can be written as \begin{equation}\label{rank25}\rank(X)+\frac{1}{2}\fro{A(X)-A(D)}_F\end{equation} where $A$ is the linear operator on $\m_{m,n}$ of pointwise multiplication with $\sqrt{W_{i,j}}$. From Example \ref{exmatrixcont} we know that the l.s.c convex envelope has a closed form expression in the special case when $W$ is a direct tensor, but the majority of weights $W$ are clearly not of this form. So we assume that $W$ is not a direct tensor and hence no explicit formula for the l.s.c. convex envelope of \eqref{rank2} is available. We thus have to satisfy with estimates of the desired l.s.c. convex envelope. Consider
\begin{equation}\label{rankup}\S_{\m_{m,n},\gamma}^2(\rank)(X)+\frac{1}{2}\fro{X-D}_W,\end{equation} where the transforms in the above formula have explicit expressions by formula \eqref{SSrank}.

If we suppose that $\gamma\leq \underline{\gamma}=\min_{i,j}{W_{i,j}}$ we shall show that \eqref{rankup} is convex, whereas if $\gamma\geq \overline{\gamma}=\max_{i,j}{W_{i,j}}$ the minimizers of \eqref{rankup} are the same as those of the original functional \eqref{rank2}. Moreover, the l.s.c. convex envelope of \eqref{rank2} sits in between the two possibilities, i.e. (omitting the explicit reference to $\m_{m,n}$ for easy reading)
$$\S_{\underline{\gamma}}^2(\rank)(X)+\frac{1}{2}\fro{X-D}_W\leq CE\para{\rank(X)+\frac{1}{2}\fro{X-D}_W}\leq \S_{\overline{\gamma}}^2(\rank)(X)+\frac{1}{2}\fro{X-D}_W.$$

As a method for ``solving'' \eqref{rank2}, replacing it with either of the two possibilities may seem ad hoc but we remind the reader that minimization of the convex problem
\begin{equation}\label{nuclear}\|X\|_*+\frac{1}{2}\fro{X-D}_W\end{equation} where $\|X\|_*$ denotes the nuclear norm,
has become very popular in recent years, where the rationale behind considering \eqref{nuclear} instead of \eqref{rank2} is that the nuclear norm appears as the convex envelope of the rank restricted to the unit ball. Clearly, both options considered here stay closer to the original problem \eqref{rank2} than \eqref{nuclear}.
\end{ex}

We end this section with a concrete toy-example providing intuition for the two possibilities, which despite its simplicity summarize the general picture. Recall that $|\cdot|_0$ is the characteristic function of $\R\setminus\{0\}$.

\begin{figure}
\begin{center}
\includegraphics[width=0.49\linewidth]{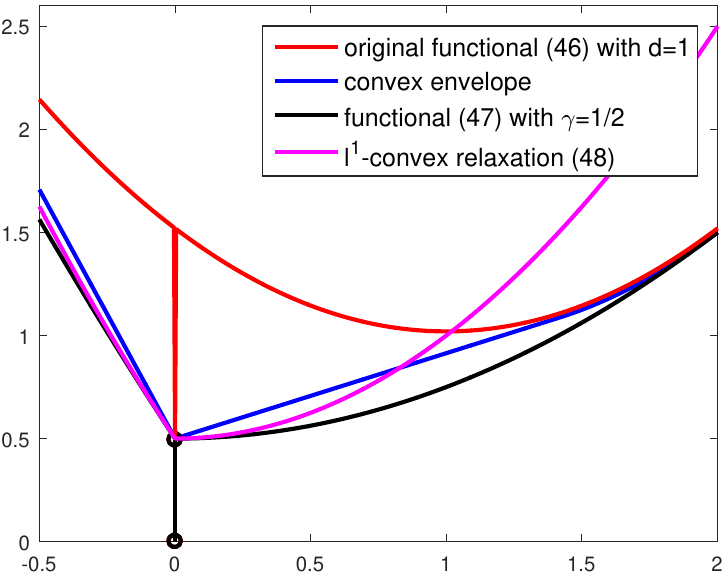}
\includegraphics[width=0.49\linewidth]{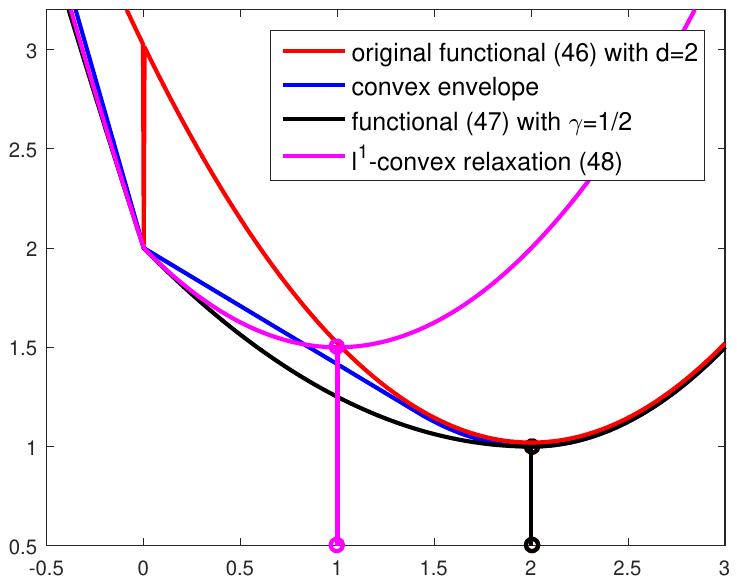}
\end{center}
\caption{Illustration to Example \ref{underestimate}. The circles show location of global minimizers. In both cases, the minimizer of \eqref{rankdown1} and \eqref{rankdown2} coincide, although this is not always the case (see Figure \ref{fig3intro}).}\label{fig1}
\end{figure}

\begin{ex}\label{underestimate}
Let $\V=\R$ and suppose we wish to minimize \begin{equation}\label{rankdown1}|x|_0+\frac{1}{2}\fro{x-d}=|x|_0+\frac{1}{2}|x-d|^2\end{equation} with  $d=1$, i.e. the red curve in the left graph of Fig. \ref{fig1}. As is readily seen, the minimum occurs at $x=0$ which is also the unique minimum of the l.s.c. convex envelope, painted in blue. It differs from \eqref{rankdown1} in the interval $[-\sqrt{2},\sqrt{2}]$ where it is piecewise linear.

However, suppose for the sake of the argument that this minimum, as well as its l.s.c. convex envelope, are impossible to compute analytically. In analogy with \eqref{rankup} we thus replace \eqref{rankdown1} by
\begin{equation}\label{rankdown2}\S_{\gamma}^2(|\cdot|_0)(x)+\frac{1}{2}|x-d|^2,\end{equation}
where $\S_\gamma^2(|\cdot|_0)(x)=1-\para{\max\{1-\frac{\sqrt{\gamma}|x|}{\sqrt{2}},0\}}^2,$ which follows by \eqref{15}. For $\gamma=1/2$, \eqref{rankdown2} is depicted in black. We note that \eqref{rankdown2} is convex and also have $x=0$ as the global minimizer. This is however not always the case, as Figure \ref{fig3intro} shows. We also include a graph of the expression \eqref{nuclear} adapted to this situation, i.e. \begin{equation}\label{nuclear1}|x|+\frac{1}{2}|x-d|^2,\end{equation} which also has $x=0$ as minimizer.

The right graph shows the same setup but with $d=2$. In this case the global minimizer of \eqref{rankdown1} and \eqref{rankdown2} coincide, but the minimizer of \eqref{nuclear1} is different. In fact we see the well known shrinking bias pertinent to these techniques.
\end{ex}

\begin{ex}\label{overestimate}
This example is the same as the previous except $\gamma=2$, and is illustrated in Figure \ref{fig2}. We note that \eqref{rankdown2} is non-convex (but at least continuous) and moreover neither the global nor local minimizers have moved. We also see that the amount of local minimizers of \eqref{rankdown2} may be fewer than those of \eqref{rankdown1} (and in real scenarios often is, see \cite{soubies2015continuous}). We prove this statement in a more general setting in Section \ref{secoverestimate}. For a case where \eqref{rankdown2} has precisely the same local minimizers as \eqref{rankdown1}, see Figure \ref{fig3intro}.
\end{ex}

\begin{figure}
\begin{center}
\includegraphics[width=0.49\linewidth]{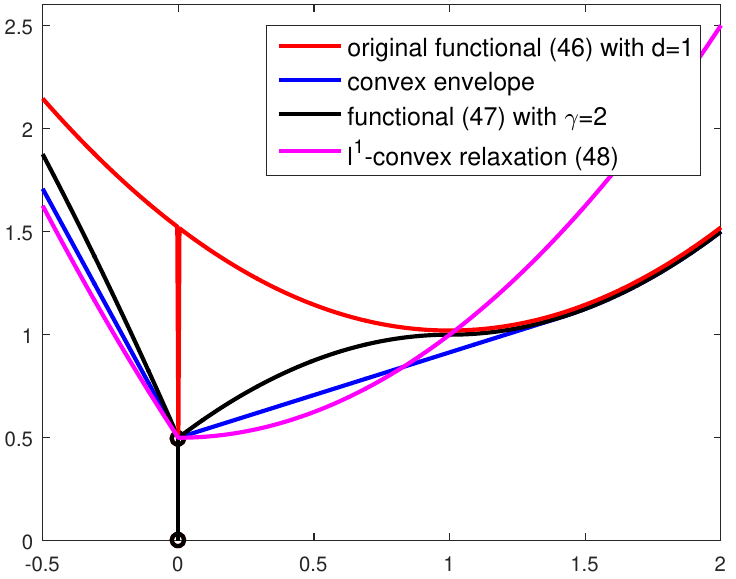}
\includegraphics[width=0.49\linewidth]{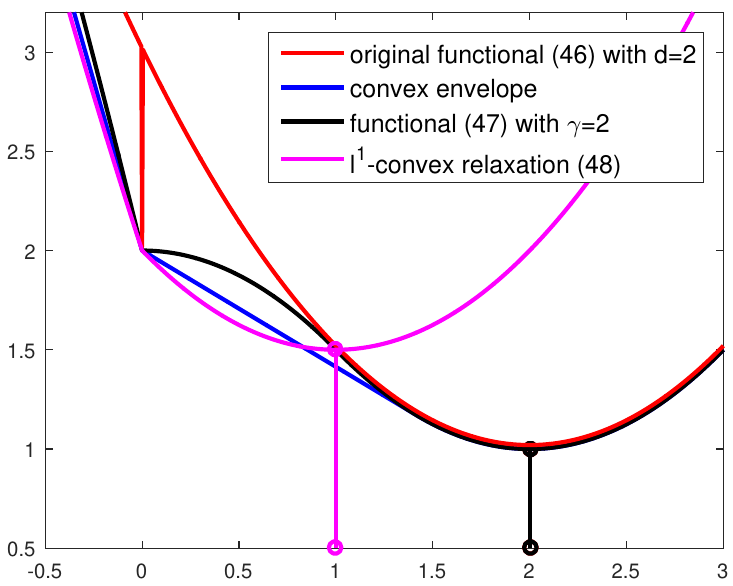}
\end{center}
\caption{Illustration to Example \ref{overestimate}. Circles represent global minima.}\label{fig2}
\end{figure}

In the coming sections we shall see that the situation in Examples \ref{underestimate}-\ref{overestimate} is symptomatic for the general case.

\subsection{Case $A^*A\geq \gamma I$.}\label{secunderestimate}
Let $f$ be a $[0,\infty]-$valued functional and $A:\V\rightarrow\W$ a bounded linear operator. We remind the reader that we are interested in the relationship between $\J$ and $\J_\gamma$ defined in \eqref{t4} and \eqref{t4mod} respectively. The main result of this section is that $\J_\gamma$ is a convex minorant of the l.s.c. convex envelope of $\J$, denoted $CE(\J)$.

\begin{theorem}\label{tunder}
For $\gamma>0$ such that $A^*A\geq \gamma I$, $\J_\gamma$ is convex and $\J_\gamma\leq CE(\J)$. Moreover, if $A^*A> \gamma I$ then it is strongly convex, in which case it has a unique minimizer. Finally, a minimizer $\hat x$ of $\J_{\gamma}$ is a minimizer of $\J$ whenever $f(\hat x)=S^2_{\gamma}(f)(\hat x)$.
\end{theorem}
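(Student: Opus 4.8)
The plan is to prove the final implication by a simple sandwiching argument, entirely parallel to the one used for Proposition \ref{y66}. The whole point is that $\J_\gamma$ lies pointwise below $\J$, so if the two functionals happen to \emph{agree} at a minimizer of the lower one, that point must minimize the upper one as well. No convexity or strong convexity is needed for this last step; those properties (established in the earlier parts of the theorem) only serve to guarantee existence and uniqueness of the minimizer $\hat x$ of $\J_\gamma$, but the implication itself works for \emph{any} minimizer of $\J_\gamma$.

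First I would record the pointwise domination. By Theorem \ref{t1} we have the inequality $0\leq \S_\gamma^2(f)\leq f$ everywhere on $\V$. Adding the common data term $\frac{1}{2}\fro{Ax-d}_\W$ to both sides of $\S_\gamma^2(f)\leq f$ yields
$$\J_\gamma(x)=\S_\gamma^2(f)(x)+\frac{1}{2}\fro{Ax-d}_\W\leq f(x)+\frac{1}{2}\fro{Ax-d}_\W=\J(x),\quad x\in\V.$$
Next I would use the hypothesis to turn this inequality into an equality at the distinguished point: since $f(\hat x)=\S_\gamma^2(f)(\hat x)$ and the quadratic terms coincide, we get $\J_\gamma(\hat x)=\J(\hat x)$.

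Finally I would chain the two facts. For an arbitrary $x\in\V$, the pointwise inequality gives $\J(x)\geq\J_\gamma(x)$, while the assumption that $\hat x$ minimizes $\J_\gamma$ gives $\J_\gamma(x)\geq\J_\gamma(\hat x)$; combining these with the equality at $\hat x$ produces $\J(x)\geq\J_\gamma(x)\geq\J_\gamma(\hat x)=\J(\hat x)$, so $\hat x$ is a global minimizer of $\J$.

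There is essentially no obstacle here: the entire content is already packaged in the inequality $\S_\gamma^2(f)\leq f$ from Theorem \ref{t1}, and the argument is a three-line inequality chain. The only points worth flagging for the reader are that the condition $f(\hat x)=\S_\gamma^2(f)(\hat x)$ is exactly the criterion that can be checked a posteriori once a concrete expression for $\S_\gamma^2(f)$ is available (as in the examples of Section \ref{ex}), and that the conclusion is about \emph{global} minimizers, matching the global domination $\J_\gamma\leq\J$.
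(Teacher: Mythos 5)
Your argument for the final implication is correct and is in fact identical to the paper's: the chain $\J(y)\geq \J_\gamma(y)\geq \J_\gamma(\hat x)=\J(\hat x)$, resting only on $\S_\gamma^2(f)\leq f$ from Theorem \ref{t1}, is exactly how the paper concludes. However, your proposal has a genuine gap: it proves only the last sentence of the theorem and silently skips its main content. You write that convexity and strong convexity are ``established in the earlier parts of the theorem,'' but those earlier parts \emph{are} part of the statement to be proved --- nothing in Part I covers them, since Theorem \ref{t1} only handles quadratic terms of the form $\frac{\gamma}{2}\fro{x-d}$, not $\frac{1}{2}\fro{Ax-d}_\W$. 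Indeed, the whole point of Part III is that the presence of $A$ breaks the direct applicability of the earlier machinery, and the hypothesis $A^*A\geq \gamma I$ is never used in your argument, which should be a warning sign. Moreover, even the inequality $\J_\gamma\leq CE(\J)$ does not follow from your sandwich: you only obtain the weaker $\J_\gamma\leq \J$, and to pass from that to domination by the l.s.c.\ convex envelope you need precisely the convexity (and lower semicontinuity) of $\J_\gamma$ that you did not establish.

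The missing idea in the paper's proof is a decomposition adapted to $A$. One first reduces to $d=0$ by expanding $\fro{Ax-d}$ and discarding the affine part, which does not affect convexity. Then one introduces $\scal{x,y}_{\U}:=\scal{Ax,Ay}_\W-\gamma\scal{x,y}_\V$, which is a semi-inner product exactly because $A^*A\geq \gamma I$ (an inner product if the inequality is strict), and writes
$$f(x)+\frac{1}{2}\fro{Ax}_\W=\para{f(x)+\frac{\gamma}{2}\fro{x}_\V}+\frac{1}{2}\fro{x}_{\U}.$$
By Theorem \ref{t1}, $\S_\gamma^2(f)(x)+\frac{\gamma}{2}\fro{x}_\V$ is the l.s.c.\ convex envelope of the first summand, so $\J_\gamma$ is this l.s.c.\ convex functional plus the convex term $\frac{1}{2}\fro{x}_{\U}$; this yields convexity, hence $\J_\gamma\leq CE(\J)$, and strong convexity plus supercoercivity (and thus a unique minimizer, via Corollary 11.15 in \cite{bauschke2011convex}) when $A^*A>\gamma I$. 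Without some version of this step your proof establishes only the easy final claim.
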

\begin{proof}
Upon expanding $\fro{Ax-d}=\|Ax\|^2+2\scal{Ax,d}+\fro{d}$ and noting that the latter two terms is affine linear, it is easily seen that it suffices to prove the first part of the proposition for $d=0$. That $\J_\gamma$ is l.s.c. and that $\J_\gamma\leq \J$ follows immediately by Theorem \ref{t1}, and thus $\J_\gamma\leq CE(\J)$ follows immediately upon showing that $\J_\gamma$ is convex. Define $\scal{x,y}_{\U}=\scal{Ax,Ay}_{\W}-\gamma\scal{x,y}_{\V}$ and note that this is a semi-inner product as long as $A^*A\geq \gamma I$, which is an inner product if the inequality is strict. In either case, $\fro{x}_{\U}:=\scal{x,x}_{\U}$ is convex (see Ch.~I.1 of \cite{conway2013course}). It follows that $$f(x)+\frac{1}{2}\fro{Ax}_{\W}=(f(x)+\frac{\gamma}{2}\fro{x}_{\V})+\frac{1}{2}\fro{x}_{\U},$$ which by Theorem \ref{t1} implies that $\J_\gamma$ equals the l.s.c. convex envelope of $f(x)+\frac{\gamma}{2}\fro{x}_{\V}$ plus the term $\frac{1}{2}\fro{x}_{\U}$. We conclude that $\J_\gamma$ is a convex functional, which is strongly convex when $A^*A> \gamma I$. In the latter case, the existence of a unique minimizer follows by Corollary 11.15 in \cite{bauschke2011convex}, (supercoercivity of $\J_\gamma$ is obvious by the term $\frac{1}{2}\fro{x}_{\U}$).

Now let $d$ be fixed and let $\hat x$ be a minimizer of $\J_\gamma$. Suppose that $f(\hat x)=\S_\gamma^2(f)(\hat x)$ and let $y\in\V$ be arbitrary. Then $\J(y)\geq \J_\gamma(y)\geq \J_\gamma(\hat x)=\J(\hat x)$, showing that $\hat x$ is a global minimizer of $\J$.
\end{proof}

\subsection{Case $A^*A\leq \gamma I$.}\label{secoverestimate}

Let $f$ be a $[0,\infty]-$valued functional and $A:\V\rightarrow\W$ a bounded linear operator. Again we are interested in the relationship between $\J$ and $\J_\gamma$ defined in \eqref{t4} and \eqref{t4mod} respectively. The main result of this section is that $\J_\gamma$ does not move minima for $\gamma$ in the stated range, but we begin by noting the following inequalities, the first one being reverse of the one proved in Theorem \ref{tunder}.

\begin{proposition}\label{pover}
For $\gamma$ such that $A^*A\leq \gamma I$, we have \begin{equation}\label{ineq2}CE(\J)\leq \J_\gamma\leq \J.\end{equation}
\end{proposition}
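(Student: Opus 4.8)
The plan is to establish the two inequalities in \eqref{ineq2} separately, exploiting the fact that adding or subtracting a quadratic term to $f$ lets me reuse the convex-envelope characterization from Theorem \ref{t1}. The right inequality $\J_\gamma \leq \J$ is immediate: Theorem \ref{t1} already gives $\S_\gamma^2(f)\leq f$ pointwise, and adding the common term $\frac{1}{2}\fro{Ax-d}_\W$ preserves this, so $\J_\gamma(x)\leq\J(x)$ for every $x$. The substance is the left inequality $CE(\J)\leq \J_\gamma$.

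For the left inequality, the key observation is the reverse of the decomposition used in Theorem \ref{tunder}. Since $A^*A\leq\gamma I$, the quadratic form $x\mapsto \frac{\gamma}{2}\fro{x}_\V-\frac{1}{2}\fro{Ax}_\W$ is nonnegative and convex; write $\scal{x,y}_{\U}=\gamma\scal{x,y}_\V-\scal{Ax,Ay}_\W$ for the associated semi-inner product, so that $\frac{1}{2}\fro{Ax}_\W = \frac{\gamma}{2}\fro{x}_\V - \frac{1}{2}\fro{x}_\U$. First I would reduce to the case $d=0$ exactly as in Theorem \ref{tunder}, by expanding $\fro{Ax-d}_\W=\fro{Ax}_\W+2\scal{Ax,d}_\W+\fro{d}_\W$ and absorbing the affine remainder, which does not affect which functional is the convex envelope. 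With $d=0$ I can then write
\begin{equation}
\J_\gamma(x)=\S_\gamma^2(f)(x)+\frac{\gamma}{2}\fro{x}_\V-\frac{1}{2}\fro{x}_\U.
\end{equation}
By Theorem \ref{t1}, the sum $\S_\gamma^2(f)(x)+\frac{\gamma}{2}\fro{x}_\V$ is the l.s.c.\ convex envelope of $f(x)+\frac{\gamma}{2}\fro{x}_\V$, hence it is a convex minorant of $f(x)+\frac{\gamma}{2}\fro{x}_\V$. Subtracting the convex term $\frac{1}{2}\fro{x}_\U$ from both sides shows $\J_\gamma\leq f(x)+\frac{\gamma}{2}\fro{x}_\V-\frac{1}{2}\fro{x}_\U = \J(x)$, recovering the right inequality but not yet the left.

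To obtain $CE(\J)\leq\J_\gamma$ I would argue that $\J_\gamma$ is itself a l.s.c.\ minorant of $\J$, and that it lies below the convex envelope because of its specific construction. The natural route is to show $\J_\gamma$ is the largest functional of the form ``convex-envelope-of-$f$-part minus the concave correction'' lying below $\J$; more directly, since $CE(\J)$ is by definition the supremum of all l.s.c.\ convex minorants of $\J$, it suffices to verify that $\J_\gamma$ is dominated by every such minorant, or equivalently to produce $CE(\J)$ explicitly and compare. I expect the cleanest path is: let $g$ be any l.s.c.\ convex minorant of $\J$; then $g(x)+\frac{1}{2}\fro{x}_\U$ is a l.s.c.\ convex minorant of $f(x)+\frac{\gamma}{2}\fro{x}_\V$ (using $g\leq\J$ and the $d=0$ identity), so by the maximality of the convex envelope $g(x)+\frac{1}{2}\fro{x}_\U\leq \S_\gamma^2(f)(x)+\frac{\gamma}{2}\fro{x}_\V$, which rearranges to $g(x)\leq\J_\gamma(x)$. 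Taking the supremum over all such $g$ yields $CE(\J)\leq\J_\gamma$.

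The main obstacle I anticipate is the convexity/monotonicity bookkeeping around the semi-inner product $\scal{\cdot,\cdot}_\U$: I must confirm that $\frac{1}{2}\fro{x}_\U$ is genuinely convex (which requires $A^*A\leq\gamma I$ so that $\U$ is positive semidefinite) and that adding it to an arbitrary l.s.c.\ convex minorant $g$ preserves both lower semicontinuity and convexity, so that the maximality property of the convex envelope of $f(x)+\frac{\gamma}{2}\fro{x}_\V$ can legitimately be invoked. The rest is the routine affine reduction in $d$ and careful rearrangement of inequalities.
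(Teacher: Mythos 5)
Your proposal is correct and follows essentially the same route as the paper: the right inequality from $\S_\gamma^2(f)\leq f$, the affine reduction to $d=0$, and then transferring the convex quadratic correction $\frac{\gamma}{2}\fro{x}_\V-\frac{1}{2}\fro{Ax}_\W$ across the inequality so that the maximality of $CE\para{f+\frac{\gamma}{2}\fro{\cdot}_\V}=\S_\gamma^2(f)+\frac{\gamma}{2}\fro{\cdot}_\V$ (Theorem \ref{t1}) applies. The only cosmetic difference is that you quantify over an arbitrary l.s.c.\ convex minorant $g$ and take a supremum, whereas the paper applies the identical manipulation directly to $g=CE(\J)$ via $h(x)=CE(\J)(x)-\frac{1}{2}\fro{Ax}_\W$.
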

\begin{proof}
The right inequality is immediate since $\S^2_\gamma(f)\leq f$ by Theorem \ref{t1}. As in Theorem \ref{tunder} we moreover see that it suffices to prove the left inequality for $d=0$. To this end, set $h(x)=CE(\J)(x)-\frac{1}{2}\|Ax\|^2$. Since $CE(\J)\leq f+\frac{1}{2}\fro{Ax}$ we have $h\leq f$ and moreover $$h(x)+\frac{\gamma}{2}\fro{x}=CE(\J)+\para{\frac{\gamma}{2}\fro{x}-\frac{1}{2}\fro{Ax}}.$$ The right hand side is convex and l.s.c., by which we conclude that $$h(x)+\frac{\gamma}{2}\fro{x}\leq CE(f+\frac{\gamma}{2}\fro{\cdot})(x)=\S_\gamma^2(f))(x)+\frac{\gamma}{2}\fro{x}$$
(the last identity follows by Theorem \ref{t1}, which gives $h(x)\leq \S_\gamma^2(f)(x)$. In other words $CE(\J)(x)\leq \S_\gamma^2(f)(x)+\frac{1}{2}\|Ax\|^2$, which is the desired inequality (for $d=0$).
\end{proof}

We now come to the main theorem of this section, inspired by Theorems 4.5 and 4.8 in \cite{soubies2015continuous}.  We say that $x$ is a local minimizer of $\J$ if there exists a neighborhood $U$ of $x$ in $\V$ such that $\J(y)\geq \J(x)$ for all $y\in U$ and we say that $x$ is a strict local minimizer of $\J$ if the inequality is strict for $y\neq x$.

\begin{theorem}\label{tover1}
Suppose that $A^*A< \gamma I$. If $x$ is a local minimizer (resp. strict local minimizer) of $\J_\gamma$, then it is also a local minimizer (resp. strict local minimizer) of $\J$, and $\J_\gamma(x)=\J(x)$. In particular the global minimizers coincide.
\end{theorem}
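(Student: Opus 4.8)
The plan is to deduce everything from the elementary bound $\J_\gamma\leq\J$ (immediate from $\S_\gamma^2(f)\leq f$ in Theorem \ref{t1}) together with the single nontrivial fact that $\S_\gamma^2(f)(x)=f(x)$ at every local minimizer $x$ of $\J_\gamma$. Granting this fact, the local-minimizer statements are painless: if $x$ is a local minimizer of $\J_\gamma$ on a neighbourhood $U$, then for every $y\in U$ we have $\J(y)\geq\J_\gamma(y)\geq\J_\gamma(x)=\J(x)$, which exhibits $x$ as a local minimizer of $\J$ with $\J_\gamma(x)=\J(x)$; replacing the second $\geq$ by a strict inequality for $y\neq x$ gives the strict case verbatim.

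The crux is therefore the identity $\S_\gamma^2(f)(x)=f(x)$ at a local minimizer, and this is where the hypothesis $A^*A<\gamma I$ enters. I would argue by contradiction, assuming $f(x)>\S_\gamma^2(f)(x)$ and invoking Theorem \ref{propconcave} (which requires $f$ to be weakly l.s.c.): there is a unit vector $\nu$ and $t_0>0$ so that $t\mapsto\S_\gamma^2(f)(x+t\nu)$ has second derivative $-\gamma$ on $(-t_0,t_0)$, i.e.\ is an affine function of $t$ minus $\tfrac{\gamma}{2}t^2$. Restricting $\J_\gamma$ to this line gives $$\J_\gamma(x+t\nu)=\S_\gamma^2(f)(x+t\nu)+\tfrac{1}{2}\|A(x+t\nu)-d\|^2,$$ whose misfit term is a quadratic in $t$ with leading coefficient $\tfrac{1}{2}\|A\nu\|^2=\tfrac{1}{2}\scal{A^*A\nu,\nu}$. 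Hence the restricted function is a quadratic in $t$ with leading coefficient $\tfrac{1}{2}(\scal{A^*A\nu,\nu}-\gamma)$, which is strictly negative since $A^*A<\gamma I$ and $\nu$ is a unit vector. Thus $t\mapsto\J_\gamma(x+t\nu)$ is strictly concave on $(-t_0,t_0)$ and so cannot attain an interior local minimum at $t=0$, contradicting that $x$ minimizes $\J_\gamma$ locally. I expect this balancing of the $-\gamma$ curvature furnished by Theorem \ref{propconcave} against the strictly-less-than-$\gamma$ curvature of the misfit term to be the main obstacle, as it is the only place the strict operator inequality and the finer structure result are used.

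It remains to show the global minimizers coincide, which I would do in both directions. If $x$ globally minimizes $\J_\gamma$, then $\J(y)\geq\J_\gamma(y)\geq\J_\gamma(x)=\J(x)$ for all $y$, so $x$ globally minimizes $\J$. For the converse I would first record that $\inf\J_\gamma=\inf\J$: Proposition \ref{pover} gives $CE(\J)\leq\J_\gamma\leq\J$, and since the constant $\inf\J$ is a convex l.s.c.\ minorant of $\J$ (using $\J\geq 0$) we have $\inf CE(\J)=\inf\J$, so the three infima are squeezed together. Then for any global minimizer $x$ of $\J$ we get $\J_\gamma(x)\leq\J(x)=\inf\J=\inf\J_\gamma$, forcing $x$ to minimize $\J_\gamma$ as well; hence the two minimizer sets agree.
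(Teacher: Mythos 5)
Your proposal is correct and takes essentially the same route as the paper: the crux identity $f(x)=\S_\gamma^2(f)(x)$ at local minimizers is obtained exactly as in the paper by combining Theorem \ref{propconcave} with the strict concavity of $t\mapsto\J_\gamma(x+t\nu)$ (the paper merely packages the curvature computation in the semi-norm $\|\nu\|_{\U}^2=\gamma\|\nu\|^2-\|A\nu\|^2$, while you compute the quadratic coefficient $\tfrac12(\langle A^*A\nu,\nu\rangle-\gamma)$ directly), and your global-minimizer argument is the same sandwich $CE(\J)\leq\J_\gamma\leq\J$ from Proposition \ref{pover}, with your explicit minorant argument for $\inf CE(\J)=\inf\J$ just spelling out the standard fact the paper invokes that $\J=CE(\J)$ at global minimizers.
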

\begin{proof}
Let $x$ be a local minimizer of $\J_\gamma$. If \begin{equation}\label{hyf}\S_\gamma^2(f)(x)=f(x)\end{equation} does not hold, then Theorem \ref{propconcave} implies that there exists a unit vector $\nu$ such that $$t\mapsto\para{\S_\gamma^2(f)(x+t\nu)+\frac{\gamma}{2}\fro{x+t\nu}_{\V}}$$
is affine near $t=0$. Introducing $\fro{x}_\U=\gamma\fro{x}_\V-\fro{Ax}_\W$, we have (as in the proof of Theorem \ref{tunder}) that $\|\cdot\|_\U$ defines a norm. Note that \begin{equation}\label{brysel}\begin{aligned}&\J_\gamma(x+t\nu)=\S_\gamma^2(f)(x+t\nu)+\frac{1}{2}\fro{A(x+t\nu)-d}_{\W}=\\&\para{\S_\gamma^2(f)(x+t\nu)
+\frac{\gamma}{2}\fro{x+t\nu}_{\V}}-\frac{1}{2}\|x+t\nu\|^2_\U- \scal{A(x+t\nu),d}_\W+\frac{1}{2}\|d\|^2_\W,\end{aligned}\end{equation} whose second derivative equals $-\fro{\nu}_{\U}<0$ at $t=0$, contradicting the assumption that $x$ is a local minimizer of $\J_\gamma$ (the inequality is strict since $\|A\|^2<\gamma$). We thus conclude that \eqref{hyf} holds, i.e. that $\J_\gamma(x)=\J(x)$. In view of Proposition \ref{pover}, it follows that $x$ is a local minimizer also for $\J$. The same argument applies to strict local minimizers.

We now prove that the global minimizers coincide. Note that global minimizers of $\J$ are global minimizers of $\J_\gamma$ in view of \eqref{ineq2} and the fact that $\J(x)=CE(\J)(x)$ for all global minimizers $x$. From this we also see that the global minimum of $\J$ and $\J_\gamma$ coincide, let us denote this value by $c$. Conversely suppose that $x$ is a global minimizer of $\J_\gamma$, i.e. $\J_\gamma(x)=c$. Then it is a local minimizer of $\J$ by the first part, which automatically is global for $\J$ since we otherwise would have $\J(y)<c$ for some other value $y$. The proof is complete.
\end{proof}

The situation when $A^*A= \gamma I$ (i.e. $\gamma=\|A\|^2$) is a bit more involved, so we content ourselves with the following statement concerning the global minimizers.

\begin{theorem}\label{tover2}
Set $\gamma=\|A\|^2$, let $G$ be the global minimizers of $\J$ and $G_\gamma$ the global minimizers of $\J_\gamma$. Then $G\subset G_\gamma$, and each connected component of $G_\gamma$ contain points of $G$.
\end{theorem}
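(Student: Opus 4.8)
The plan is to first dispatch the inclusion $G\subset G_\gamma$ and then to locate a point of $G$ inside each connected component of $G_\gamma$ by an extremal-point argument.

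For the inclusion, I would combine the two-sided estimate $CE(\J)\leq \J_\gamma\leq \J$ from Proposition \ref{pover} with the elementary fact that a functional and its l.s.c.\ convex envelope share the same infimum, $\inf CE(\J)=\inf\J$. This gives $\inf\J_\gamma=\inf\J=:c$, and for any $x\in G$ the squeeze $c\le CE(\J)(x)\le\J_\gamma(x)\le\J(x)=c$ forces $\J_\gamma(x)=c$, i.e.\ $x\in G_\gamma$. The same computation records that the two global minima coincide and that, on $G_\gamma$, membership in $G$ is equivalent to the contact condition $f(x)=\S_\gamma^2(f)(x)$, since $\J-\J_\gamma=f-\S_\gamma^2(f)\ge0$ by Theorem \ref{t1}.

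For the component statement, the key structural observation is that, writing $N=\ker(\gamma I-A^*A)$ for the top eigenspace of $A^*A$ and $\|\cdot\|_\U^2=\gamma\|\cdot\|_\V^2-\|A\cdot\|_\W^2$ for the associated semi-norm (as in Theorem \ref{tunder}), every vector of $N$ is $\U$-orthogonal to all of $\V$, so $\|\cdot\|_\U$ is constant on each translate $x_0+N$. Consequently $\J_\gamma$ restricted to $x_0+N$ reduces to the supercoercive convex functional $\S_\gamma^2(f)+\tfrac\gamma2\|\cdot\|_\V^2$ plus an affine term, hence is itself convex and coercive there. I would then fix a component $C$, pick $x_0\in C$, and set $K=G_\gamma\cap(x_0+N)$; this is a nonempty closed bounded convex set, therefore weakly compact, and being convex, hence connected, it lies inside $C$. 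By Krein--Milman, using weak compactness of bounded closed convex sets exactly as in Corollary \ref{corminimizers}, it has an extremal point $y$.

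It remains to show $y\in G$, which is where the boundary case $\gamma=\|A\|^2$ bites and which I expect to be the main obstacle: unlike in Theorem \ref{tover1}, the negative curvature produced by Theorem \ref{propconcave} is exactly cancelled rather than strictly dominated, so one cannot simply contradict minimality. The resolution is that this cancellation pins the flat direction into $N$. Concretely, if $y\notin G$ then Theorem \ref{propconcave} yields a unit vector $\nu$ along which $\S_\gamma^2(f)$ has second derivative $-\gamma$; the restriction $t\mapsto\J_\gamma(y+t\nu)$ is then a genuine quadratic with leading coefficient $\tfrac12(\|A\nu\|^2-\gamma)\le0$. Since $y$ minimizes $\J_\gamma$, this coefficient must vanish, forcing $\|A\nu\|^2=\gamma$, i.e.\ $\nu\in N$, and then the quadratic is constant equal to $c$ on a whole segment through $y$. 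That segment lies in $y+N=x_0+N$ and in $G_\gamma$, hence in $K$, contradicting the extremality of $y$. Therefore $y$ is a contact point, $y\in G\cap C$, and the proof is complete. A minor point to check separately is the degenerate case $N=\{0\}$, where the norm is not attained, in which the same reasoning collapses to $G_\gamma=G$.
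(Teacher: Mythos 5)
Your proof is correct and is essentially the paper's own argument: both establish $G\subset G_\gamma$ via the squeeze from Proposition \ref{pover}, slice $G_\gamma$ by a translate of $\ker\|\cdot\|_{\U}$ (which is exactly your eigenspace $N=\ker(\gamma I-A^*A)$), note that $\J_\gamma$ is convex on that affine plane so the slice is closed, bounded and convex, extract an extremal point via Krein--Milman, and use the curvature dichotomy of Theorem \ref{propconcave} to show that extremality forces the contact condition $f=\S_\gamma^2(f)$, hence membership in $G$. The only differences are cosmetic: the paper first runs the curvature argument at an arbitrary non-contact minimizer to locate the flat direction before introducing the plane, and it absorbs your degenerate case $N=\{0\}$ implicitly rather than treating it separately.
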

\begin{proof}
The statement $G\subset G_\gamma$ follows as in the above proof, as well as the fact that the global minimum of $\J$ and $\J_\gamma$ coincide; we denote it by $c$.

If $x\in G_\gamma$ and $\J(x)> c$, then it follows by \eqref{brysel} that there exists a unit vector $\nu$ such that $\frac{d^2}{dt^2}\J_\gamma(x+t\nu)\leq 0$ in a neighborhood of $t=0$. Strict inequality contradicts the assumption of global minima, so we deduce that $\J_\gamma(x+t\nu)$ is constant near $t=0,$ and hence \eqref{brysel} yields $\|\nu\|_{\U}=0$, i.e. that $\nu$ lies in the kernel of the semi-norm $\|\cdot\|_{\U}$ (which is a linear subspace by convexity of the semi-norm). Let $P$ be the affine hyperplane $P=x+\ker \|\cdot\|_{\U}$ and set $S=P\cap G_\gamma.$ For $y\in \ker \|\cdot\|_{\U}$, \eqref{brysel} implies that \begin{equation}\label{t6}\J_\gamma(x+y)=\para{S_{\gamma}^2(f)(x+y)+\frac{\gamma}{2}\fro{x+y}_{\V}}-\fro{x}_{\U}- \scal{A(x+y),d}_\W+\frac{1}{2}\|d\|^2_\W,\end{equation} so Theorem \ref{t1} implies that $\J_\gamma$ is convex on $P$. In particular, $S$ is convex. Since $\J_\gamma$ is l.s.c. it is also closed. Moreover $S$ is bounded due to the quadratic term $\fro{x+y}_{\V}$ in \eqref{t6}. $S$ is therefore weakly closed, and hence it equals the closed convex hull of its extremal points, by the Krein-Milman theorem. If $x$ now is one of these extremal points, then we can argue as in the beginning of this proof and conclude that $\J_\gamma(x)=\J(x)$, since the existence of a $\nu$ with the properties stated initially would contradict that $x$ is an extremal point of $S$.
\end{proof}

Methods for minimizing non-convex functionals include \cite{attouch2013convergence,beck2015convergence,bolte2014proximal,gong2013general,hare2009computing,ochs2015iteratively,yu2015minimizing}. As earlier in this paper, we do not discuss which algorithms are more suitable, but note that all can get stuck in local minima due to the non-convexity.

\subsection{Remarks on $CE(\J)$}

We include some theoretical results concerning $CE(\J)$, disregarding the computability aspect. Define $\V_A$ to be $\V$ equipped with the semi-norm $\|x\|_{\V_A}=\|Ax\|_\W$. This makes $\V_A$ into a semi-normed space (see e.g. Ch I.1 in \cite{conway2013course}). The definition of the $\S$ transform \eqref{defS} is readily extended to this situation,
\begin{equation}\label{defSVA}\S_{\V_A}(f)(y):=\sup_x -f(x)-\frac{1}{2}\fro{A(x-y)}_{\V},\end{equation} and many of the results of part I can be generalized to include this case. In particular we have

\begin{proposition}\label{pl0} Let $\V$ be a finite dimensional space. Then $$CE(\J)=\S_{\V_A}^2(f)+\frac{1}{2}\|Ax-d\|^2_\V.$$\end{proposition}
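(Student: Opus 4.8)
The plan is to mirror the proof of Theorem \ref{t1}, computing a Legendre transform twice, but with the genuine inner product of $\V$ replaced by the degenerate bilinear form $\scal{Ax,Ay}$, and keeping careful track of the fact that $\|\cdot\|_{\V_A}$ is only a semi-norm. First I would reduce to $d=0$. Expanding $\fro{Ax-d}=\fro{Ax}-2\scal{x,A^*d}+\fro{d}$ shows that $\J$ differs from $f(x)+\frac{1}{2}\fro{Ax}$ by an affine term, and since both $CE(\cdot)$ and the claimed expression $\S_{\V_A}^2(f)(x)+\frac{1}{2}\fro{Ax-d}$ transform identically under the addition of an affine function, it suffices to prove
$$CE\para{f(x)+\tfrac{1}{2}\fro{Ax}}=\S_{\V_A}^2(f)(x)+\tfrac{1}{2}\fro{Ax}.$$
This is exactly the reduction already used in the proofs of Theorem \ref{tunder} and Proposition \ref{pover}.

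Write $G(x)=f(x)+\frac{1}{2}\fro{Ax}$ and $T=A^*A$. The next step is to record two algebraic identities obtained by completing the square, in direct analogy with the two displayed formulas of Theorem \ref{t1}. Expanding \eqref{defSVA} gives
$$\S_{\V_A}(f)(y)+\tfrac{1}{2}\fro{Ay}=\sup_x \scal{x,Ty}-\para{f(x)+\tfrac{1}{2}\fro{Ax}}=\L(G)(Ty),$$
so that $\phi:=\S_{\V_A}(f)+\frac{1}{2}\fro{A\,\cdot\,}$ satisfies $\phi=G^*\circ T$. Iterating the same computation yields
$$\S_{\V_A}^2(f)(x)+\tfrac{1}{2}\fro{Ax}=\sup_y \scal{Tx,y}-\phi(y)=\phi^*(Tx),$$
so the right-hand side of the proposition equals $\para{G^*\circ T}^*(Tx)$. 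In particular it factors as $\Psi(Ax)$ for an ordinary l.s.c.\ convex $\Psi$ on $\ran A$, hence it is convex and l.s.c.\ with respect to the $\V_A$-topology, and $\S_{\V_A}^2(f)\le f$ follows as in Theorem \ref{t1}; thus $\S_{\V_A}^2(f)+\frac{1}{2}\fro{A\,\cdot\,}$ is an admissible minorant of $G$.

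It remains to identify $\para{G^*\circ T}^*(Tx)$ with $CE(G)$, and this is where the main obstacle lies. A direct computation shows
$$\para{G^*\circ T}^*(Tx)=\sup_{u\in\ran T}\scal{u,x}-G^*(u),$$
i.e.\ the biconjugate of $G$ but with the dual supremum restricted to $\ran T=\ran A^*=(\ker A)^\perp$. The crucial observation is that this subspace is precisely the continuous dual of the semi-normed space $\V_A$: a linear functional on $\V$ is $\|\cdot\|_{\V_A}$-continuous if and only if it has the form $x\mapsto\scal{x,A^*w}$. Consequently the restricted biconjugate above is exactly the Fenchel--Moreau biconjugate taken in the $\V_A$-duality, which equals the l.s.c.\ convex envelope relative to the $\V_A$-structure; this is the sense in which $CE(\J)$ is to be understood here. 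In the degenerate case the $\V_A$-envelope genuinely differs from the norm envelope on $\ker A$ (as one already sees for $f=\iota_{\{0\}}$, where $\S_{\V_A}^2(f)=\iota_{\ker A}$), so the restriction of the dual variables is essential rather than cosmetic. To make the Fenchel--Moreau step rigorous in the merely semi-normed setting I would pass to $(\ker A)^\perp$, or equivalently to the quotient $\V/\ker A$ on which $A$ induces an injective map and $\|\cdot\|_{\V_A}$ a genuine Hilbert norm, apply the classical Fenchel--Moreau theorem (\cite{bauschke2011convex}) there, and pull back; an $\epsilon$-regularization $\fro{Ax}+\epsilon\fro{x}$ together with Theorem \ref{t1} and a monotone limit gives an alternative route. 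The hard part is thus not the algebra but correctly pinning down the duality in which the identity holds and verifying that the finite-dimensional reduction to $(\ker A)^\perp$ is legitimate.
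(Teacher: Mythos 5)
Your conjugate calculus is correct: the identities $\S_{\V_A}(f)+\frac{1}{2}\fro{A\,\cdot}=G^*\circ T$ and $\S_{\V_A}^2(f)(x)+\frac{1}{2}\fro{Ax}=\para{G^*\circ T}^*(Tx)=\sup_{u\in\ran T}\scal{u,x}-G^*(u)$ with $T=A^*A$ all check out, and your fallback route --- passing to $(\ker A)^\perp$, equivalently the quotient $\V/\ker A$, where $\|\cdot\|_{\V_A}$ is a genuine Hilbert norm, and applying Fenchel--Moreau there --- is in essence exactly the paper's proof: the paper sets $\U=(\ker A)^\perp$, transfers $f$ to $\U$ via the infimal projection $\tilde f(x)=\inf_{y\in\ker A}f(x+y)$, strips the affine part of $\frac{1}{2}\fro{Ax-d}$, and invokes Theorem \ref{t1} on $\U$. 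So the algebraic content of the two arguments is the same; your version makes the dual picture ($\ran T=\ran A^*=(\ker A)^\perp$ as the continuous dual of the semi-normed space) more explicit.

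Where you part ways with the paper is the identification of the restricted biconjugate with $CE(\J)$, and here your caution is vindicated: the step you refuse to take is precisely the step the paper dismisses with ``it is easy to see that both sides of the sought identity are constant on $\ker A$,'' and that claim is false for the left-hand side in the stated generality. Indeed your own example settles it: for $f=\iota_{\{0\}}$ one has $\J=\iota_{\{0\}}+\frac{1}{2}\fro{d}$, which is already l.s.c.\ convex, so $CE(\J)=\J$, whereas $\S_{\V_A}^2(f)(x)+\frac{1}{2}\fro{Ax-d}=\iota_{\ker A}(x)+\frac{1}{2}\fro{d}$; the two differ on $\ker A\setminus\{0\}$, so Proposition \ref{pl0} as printed fails whenever $\ker A\neq\{0\}$ and $f$ is of this degenerate type. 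The literal statement (with $CE$ the norm-topology envelope, as fixed in the paper's notation section) is restored under the extra hypothesis that every affine minorant of $\J$ has slope in $(\ker A)^\perp$, equivalently $\mathsf{dom}(\J^*)\subseteq(\ker A)^\perp$ --- satisfied e.g.\ when $f$ is bounded above, which covers the motivating case $f=\mu\|\cdot\|_0$; under that hypothesis $CE(\J)$ really is constant on $\ker A$-cosets and both your restricted biconjugate and the paper's quotient reduction deliver $CE(\J)$ itself, with no need to reinterpret $CE$ in the $\V_A$-duality. In short: your proof is correct for what it proves, it follows the paper's route in substance, and the one point where you diverge --- demanding a hypothesis or a change of duality before identifying the restricted biconjugate with $CE(\J)$ --- is a genuine gap in the paper's argument rather than in yours.
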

\begin{proof}
$\|\cdot\|_{\V_A}$ is a norm on the vector space $(\ker A)^{\perp}$, which becomes a Hilbert space since it clearly is generated by a scalar product and $\V$ is finite dimensional. We denote this Hilbert space by $\U$. By the equivalence of all norms in finite dimensional spaces, we have that a function on $\U$ is l.s.c with respect to $\|\cdot\|_{\V_A}$ if and only if it is l.s.c. with respect to $\|\cdot\|_\V$. Set $$\tilde f(x)=\inf_{y\in \ker A} f(x+y), \quad x\in\U.$$ Since $\U$ is a Hilbert space we can apply the results from part I there. It is easy to see that both sides of the sought identity are constant on $\ker A$. More precisely, for $x\in \U$ and $y\in \U^{\perp}$ we have $$CE(\J)(x+y)=CE\left(\tilde f(\cdot)+\frac{1}{2}\|A\cdot -d\|_\W^2\right)(x)$$ and $\S_{\V_A}^2(f)(x+y)=\S_{\U}^2(\tilde f)(x)$, so it suffices to prove \begin{equation}\label{alingsas}CE\left(\tilde f(\cdot)+\frac{1}{2}\|A\cdot -d\|_\W^2\right)(x)=\S_{\U}^2(\tilde f)(x)+\frac{1}{2}\|Ax-d\|^2_\W, \quad x\in\U.\end{equation}
Since \begin{equation*} \frac{1}{2}\|Ax-d\|^2_\W=\frac{1}{2}\fro{x}_{\U}-\scal{Ax,d}_\W+\frac{1}{2}\fro{d}_\W\end{equation*} and the latter two terms are affine linear, they can be moved outside the parenthesis of $CE$ in \eqref{alingsas}, whereby they cancel the corresponding terms from the right hand side. It follows that it suffices to prove \eqref{alingsas} with $d=0$. But this is immediate by Theorem \ref{t1}.
\end{proof}

\section{Conclusions}

In Part I we have provided theory for computing l.s.c.~convex envelopes of functionals of the type $f(x)+\frac{1}{2}\fro{x-d}$, and shown a connection with Lasry-Lions approximants. These results and connections are new (to our best knowledge) unless explicitly stated. A number of prior works most notably by Carl Olsson and Viktor Larsson has paved the way for these insights.

In Part II we showed how the convex envelopes can be used in combination with additional restrictions, which is one of the key applications. After all, if the convex envelope is computable then the unconstrained minimization problem has an explicit solution, so the convex envelope becomes obsolete. This part is more for illustration and mainly contain ideas which has appeared elsewhere, albeit in case specific circumstances.

In Part III we considered unconstrained problems where the convex envelope is not computable due to a matrix $A$ in the quadratic term. We studied what happens if we replace $f$ by the nicer functional $\S_\gamma^2(f)$. We showed that for sufficiently small $\gamma$ this yields convex functions below the original functional, which coincide with the original functional on a large part of the underlying Hilbert space. For $\gamma$ sufficiently large on the other hand, we loose convexity but gain the desirable feature that the modified functional has the same minimizers as the original one. The results in this part are completely new to this paper, albeit some ideas have been generalized from those developed by Emmanuel Soubies, Laure Blanc-F\'{e}raud and Gilles Aubert.

\section{Related works}
The present work is the extension of a chain of ideas. $\ell^1-\ell^2$-minimization tricks have a long history and got renewed attention with the work of Donoho and Cand\'{e}s among others. In the same spirit the nuclear norm minimization strategy was investigated by Fazel and coworkers. These two trends were subsequently refined, in the particular case of a quadratic misfit term, by Carl Olsson and coworkers as well as by Blanc-Feraud and coworkers. The contribution of the present paper is a unifying framework, theoretical development and tools for computing the convex envelopes.

We have not found any similar result in the literature, my apologies if I have missed something. For completeness I include a list of results which seem relevant. The fairly recent survey paper \cite{lucet2010shape} is about the closely related concept of computing Fenchel conjugates, and also mentions the Lasry-Lions approximants, yet it has no overlap with the present paper despite citing 262 other papers. It primarily deals with numeric computation of convex envelopes in cases when symbolic formulas are not available, and as such it is an interesting alternative to the methods developed in Part III. The same goes for the papers \cite{mccormick1976computability} and \cite{borwein2009symbolic}, although the focus there is on symbolic computations using Maple. The papers \cite{attouch1993approximation,stromberg1996regularization} deal with Lasry-Lions approximants in Hilbert space, but does not make the connection with the convex envelopes. The importance of computing convex envelopes is stressed in \cite{meyer2005convex}, where techniques for computing convex envelopes of so called ``convex polyhedral'' functions is developed. Convex approximations from below are considered in \cite{brighi1994approximated}, which should be compared with the results in Section \ref{secunderestimate}. Other non-convex variations of the compressed sensing type techniques involve replacing the $\ell^1$-term with an $\ell^p-$term for $p<1$, see e.g. \cite{grasmair2010non,saab2010sparse}. From an algorithmic point of view, the methods in Part III boils down to analytically computing proximal operators for approximations of the convex envelope of the original functional to be minimized. An alternative is to numerically try to compute the proximal operator of the original functional, which is pursued in \cite{hare2009computing}.

\section*{Acknowledgment} I am grateful to Fredrik Andersson, J\'{e}r\^{o}me Bolte and Laure Blanc-F\'{e}raud for interesting discussions on this subject.

\section*{Appendix I; von-Neumann's trace inequality for operators}\label{ap2}
We repeat the statement of von-Neumann's trace inequality for operators on separable Hilbert spaces, which reads as follows.
\begin{theoremextra1}
Let $\V_1,~\V_2$ be any separable Hilbert spaces, let $X,Y\in \B_2(\V_1,\V_2)$ be arbitrary and denote their singular values by $\sigma_j(X)$, $\sigma_j(Y)$, respectively. Then \begin{equation}\label{vn1}\scal{X,Y}\leq \sum_j \sigma_j(X)\sigma_j(Y)\end{equation}
with equality if and only if the singular vectors can be chosen identically.
\end{theoremextra1}
Surprisingly, this statement is nowhere to be found in the standard references, such as the books by Simon \cite{simon1979trace}, Conway \cite{conway2000course}, Ringrose \cite{ringrose}, Schatten \cite{schatten2013norm}, Dowson \cite{dowson1978spectral} Dunford and Schwartz \cite{dunford1963linear}. The weaker inequality $\scal{X,Y}\leq \|X\|\|Y\|$, which is needed to show that the Hilbert-Schmidt class is a Hilbert space, is of course found in many of the above references, see e.g. Theorem 18, Section XI 6 of \cite{dunford1963linear}. In fact, the ``only if'' part of the statement is hard to find even in the finite dimensional case. It does appear in \cite{de1994exposed} (cited 15 times at the time of writing) along with a discussion claiming that even von-Neumann himself had this part of the statement clear. We take the finite dimensional part of the statement for granted and proceed to prove the infinite dimensional case.

\begin{proof}
If both $X$ and $Y$ have finite rank, we may consider $X$ and $Y$ to be operators between the finite dimensional spaces $(\mathsf{Ker}X\cup\mathsf{Ker} Y)^{\perp}$ and $\mathsf{Span}(\ran X\cup\ran Y)$. Then $X$ and $Y$ can be represented by matrices (upon choosing some orthonormal bases) and hence the finite dimensional version of the theorem applies. We can thus assume that \eqref{vn1} holds for finite rank matrices. %If only $X$ has finite rank and $P_X$ is the orthogonal projection onto its range, it follows that $$\scal{X,Y}=\scal{P_X X,Y}=\scal{X,P_X Y}\leq \sum_j \sigma_j(X)\sigma_j(P_XY)\leq \sum_j \sigma_j(X)\sigma_j(Y).$$ Hence \eqref{vn1} holds in this case as well.

Given any $Z\in\B_2$, let $(u_{Z,j})_{j=1}^\infty$ and $(v_{Z,j})_{j=1}^\infty$ be singular vectors, i.e. such that $$Z=\sum_{j=1}^\infty\sigma_j(Z)u_{Z,j}\otimes v_{Z,j}$$
where $\sigma_j(Z)$ are the singular values and $u_{Z,j}\otimes v_{Z,j}(x)=u_{Z,j}\scal{x, v_{Z,j}}$ (see e.g. Theorem 1.4 \cite{simon1979trace}).
Set $Z^J=\sum_{j=1}^J\sigma_j(Z)u_{Z,j}\otimes v_{Z,j}$ and note that this has finite rank. Thus
\begin{equation}\label{vn2}\scal{X,Y}=\lim_{J\rightarrow\infty} \scal{X^J,Y^J} \leq \lim_{J\rightarrow\infty} \sum_{j=1}^J \sigma_j(X^J)\sigma_j(Y^J)= \sum_{j=1}^{\infty} \sigma_j(X)\sigma_j(Y),\end{equation}
where the last inequality follows by the dominated convergence theorem. We conclude that \eqref{vn1} holds without restrictions.

For the final part of the theorem, note that it is immediate that equality in \eqref{vn1} holds if $X$ and $Y$ share singular vectors. Suppose now that this is not the case, but that equality in \eqref{vn1} holds anyway. For simplicity of notation set $\xi_j=\sigma_j(X)$. Let $J$ and $K$ be integers such that $$\xi_{J-1}<\xi_{J}=\xi_{K}<\xi_{K+1}$$ and define, for $\xi_{K+1}\leq s\leq \xi_{J-1}$, \begin{equation}\label{mui}X(s)=\sum_{j<J}\xi_ju_{X,j}\otimes v_{X,j}+\sum_{J\leq j\leq K}s u_{X,j}\otimes v_{X,j}+\sum_{j>K}\xi_j u_{X,j}\otimes v_{X,j}.\end{equation}
Holding $Y$ fixed, it is clear that $\scal{X(s),Y}$ and $\sum_j \sigma_j(X(s))\sigma_j(Y)$ are affine functions of $s$ in the actual interval. Since the former is dominated by the latter and they equal at the interior point $s=\xi_{J}$, it follows that they must be the same (in $\xi_{K+1}\leq s\leq \xi_{J-1}$). In particular we have \begin{equation}\label{vn3}\scal{X(s),Y}= \sum_j \sigma_j\big(X(s)\big)\sigma_j(Y)\end{equation} for $s=\xi_{K+1}$. Consider now the (piecewise affine) extension of \eqref{mui} defined by \begin{equation*}X(s)=\sum_{j<J}\xi_ju_{X,j}\otimes v_{X,j}+\sum_{J\leq j} \min(s,\xi_j) u_{X,j}\otimes v_{X,j}\end{equation*} on $0\leq s\leq \xi_{J-1}$. The earlier argument can now be bootstrapped to conclude that \eqref{vn3} holds for all $0< s\leq \xi_{J-1}$. Upon taking a limit we conclude that \eqref{vn3} holds for $s=0,$ in which case $X(0)$ has finite rank. Repeating the entire argument with $Y$ as the ``variable'' and $X(0)$ as the fixed matrix, we conclude that identity holds in \eqref{vn1} for $X^J=\sum_{j=1}^J \xi_ju_{X,j}\otimes v_{X,j}$ and $Y^{J'}=\sum_{j=1}^{J'} \eta_j u_{Y,j}\otimes v_{Y,j}$ where $\eta_j=\sigma_j(Y)$ and $J'$ is any index such that $\eta_{J'-1}>\eta_{J'}$. By the finite dimensional version of the theorem, there are common singular vectors $\tilde{u}_j$ and $\tilde{v}_j$ such that $X^J=\sum_{j=1}^J \xi_j \tilde u_{j}\otimes \tilde v_{j}$ and $Y^{J'}=\sum_{j=1}^{J'} \eta_j \tilde u_{j}\otimes \tilde v_{j}$. It is easy to see that this contradicts the initial assumption that $X$ and $Y$ do not share singular vectors, and the proof is complete.
\end{proof}

\section{Appendix II; miscellaneous results}
We provide proofs of the two claims in Section \ref{finer}.
\begin{ex}
In $\ell^2(\N)$ the counting functional $\|x\|_0=\#\{k\in\N:~x_k\neq 0\}$ is weakly l.s.c.
\end{ex}
\begin{proof}
We need to check that the preimage of an open interval of the form $(k,\infty)$ is open in the weak topology. Let $x$ be such that $\|x\|_0> k$, let $K>k$ be an integer such that $\|x\|_0\geq K$, and let $j_1<j_2<\ldots<j_K$ be indices for which $x_{j}\neq 0$. Set $$V=\cap_{m=1}^K\{y\in\ell^2(\mathbb{N}):~|y_{j_m}-x_{j_m}|<|x_{j_m}|/2\}$$
which is open in the weak topology. Clearly $x\in V$ and $\|y\|_{0}\geq K$ for all $y\in V$, which proves the claim.
\end{proof}
\begin{ex}
The rank functional is weakly l.s.c.~on $\mathcal{B}_2(\V_1,\V_2)$.
\end{ex}
\begin{proof}
We only focus on the infinite dimensional case. Let $X\in \mathcal{B}_2(\V_1,\V_2)$ have $\rank(X)\geq K$. As before we need to produce an open set $V$ including $X$ such that $\rank(Y)\geq K$ for all $Y\in V$. By the polar decomposition of compact operators \eqref{SVD} we may pick an orthonormal basis $(v_k)_{k=1}^\infty$ for $\V_1$ and an orthonormal sequence $(u_k)_{k=1}^\infty$ such that $$X=\sum_{k=1}^\infty \sigma_k u_k\otimes v_k $$ where $\sigma_k$ are the singular values ordered decreasingly. Clearly $\sigma_K>0$ and $\para{X,u_k\otimes v_j}=\sigma_k\delta_0{(j-k)}$ where $\delta_0$ is the characteristic function of $\{0\}$ on $\R$. Define $$V=\cap_{j,k=1}^K\{Y\in\mathcal{B}_2(\V_1,\V_2):~|\para{Y,u_k\otimes v_j}-\para{X,u_k\otimes v_j}|<\epsilon\},$$ where $\epsilon$ will be determined later. Let $\iota_1$ and $\iota_2$ be the canonical inclusions (i.e. the operator that sends a vector into itself) from $\mathsf{Span}\{u_k\}_{k=1}^K$ and $\mathsf{Span}\{v_k\}_{k=1}^K$ into $\V_1$ and $\V_2$ respectively, and note that $\iota_2^*$ then acts as the orthogonal projection onto $\mathsf{Span}\{v_k\}_{k=1}^K$. Pick $Y\in V$ and note that \begin{equation}\label{rank}\rank(\iota^*_2 Y \iota_1)\leq \rank(Y).\end{equation}
Moreover $\iota^*_2 Y \iota_1$ is an operator on finite dimensional spaces which in the bases $\{u_k\}_{k=1}^K$ and $\{v_k\}_{k=1}^K$ have the matrix representation $$(y_{j,k})_{j,k=1}^K=(\para{u_j,Y v_k})_{j,k=1}^K=(\para{Y ,u_j\otimes v_k})_{j,k=1}^K.$$ It follows that, upon choosing $\epsilon$ sufficiently small, we can make this matrix arbitrarily close (pointwise) to the diagonal matrix with $\sigma_k$ on the diagonal. By basic linear algebra it follows that $\rank(\iota^*_2 Y \iota_1)=K$ for $\epsilon$ sufficiently small, independent of $Y$. Combining this with \eqref{rank} we conclude that $\rank (Y)\geq K$, which was to be shown.
\end{proof}

\bibliographystyle{plainnat}
\bibliography{referenserLL}

\begin{thebibliography}{51}
\providecommand{\natexlab}[1]{#1}
\providecommand{\url}[1]{\texttt{#1}}
\expandafter\ifx\csname urlstyle\endcsname\relax
  \providecommand{\doi}[1]{doi: #1}\else
  \providecommand{\doi}{doi: \begingroup \urlstyle{rm}\Url}\fi

\bibitem[Andersson and Carlsson(2013)]{andersson2013alternating}
Fredrik Andersson and Marcus Carlsson.
\newblock Alternating projections on nontangential manifolds.
\newblock \emph{Constructive Approximation}, 38\penalty0 (3):\penalty0
  489--525, 2013.

\bibitem[Andersson and Carlsson(2016)]{andersson2016structure}
Fredrik Andersson and Marcus Carlsson.
\newblock On the structure of positive semi-definite finite rank general domain
  hankel and toeplitz operators in several variables.
\newblock \emph{Complex Analysis and Operator Theory}, pages 1--30, 2016.

\bibitem[Andersson and Carlsson(2017)]{andersson2017fixed}
Fredrik Andersson and Marcus Carlsson.
\newblock Fixed-point algorithms for frequency estimation and structured low
  rank approximation.
\newblock \emph{Applied and Computational Harmonic Analysis}, 2017.

\bibitem[Andersson et~al.(2016)Andersson, Carlsson, and
  Wendt]{andersson2016fixed}
Fredrik Andersson, Marcus Carlsson, and Herwig Wendt.
\newblock On a fixed-point algorithm for structured low-rank approximation and
  estimation of half-life parameters.
\newblock In \emph{Signal Processing Conference (EUSIPCO), 2016 24th European},
  pages 326--330. IEEE, 2016.

\bibitem[Andersson et~al.(2017)Andersson, Carlsson, and
  Olsson]{andersson2016convex}
Fredrik Andersson, Marcus Carlsson, and Carl Olsson.
\newblock Convex envelopes for fixed rank approximation.
\newblock \emph{Optimization Letters}, 11\penalty0 (8):\penalty0 1783--1795,
  2017.

\bibitem[Attouch and Az{\'e}(1993)]{attouch1993approximation}
H~Attouch and D~Az{\'e}.
\newblock Approximation and regularization of arbitrary functions in hilbert
  spaces by the lasry-lions method.
\newblock In \emph{Annales de l'IHP Analyse non lin{\'e}aire}, volume~10, pages
  289--312, 1993.

\bibitem[Attouch et~al.(2013)Attouch, Bolte, and
  Svaiter]{attouch2013convergence}
Hedy Attouch, J{\'e}r{\^o}me Bolte, and Benar~Fux Svaiter.
\newblock Convergence of descent methods for semi-algebraic and tame problems:
  proximal algorithms, forward--backward splitting, and regularized
  gauss--seidel methods.
\newblock \emph{Mathematical Programming}, 137\penalty0 (1-2):\penalty0
  91--129, 2013.

\bibitem[Bauschke and Combettes(2011)]{bauschke2011convex}
Heinz~H Bauschke and Patrick~L Combettes.
\newblock \emph{Convex analysis and monotone operator theory in Hilbert
  spaces}.
\newblock Springer Science \& Business Media, 2011.

\bibitem[Bauschke et~al.(2008)Bauschke, Goebel, Lucet, and
  Wang]{bauschke2008proximal}
Heinz~H Bauschke, Rafal Goebel, Yves Lucet, and Xianfu Wang.
\newblock The proximal average: basic theory.
\newblock \emph{SIAM Journal on Optimization}, 19\penalty0 (2):\penalty0
  766--785, 2008.

\bibitem[Beck(2015)]{beck2015convergence}
Amir Beck.
\newblock On the convergence of alternating minimization for convex programming
  with applications to iteratively reweighted least squares and decomposition
  schemes.
\newblock \emph{SIAM Journal on Optimization}, 25\penalty0 (1):\penalty0
  185--209, 2015.

\bibitem[Bochnak et~al.(2013)Bochnak, Coste, and Roy]{bochnak2013real}
Jacek Bochnak, Michel Coste, and Marie-Fran{\c{c}}oise Roy.
\newblock \emph{Real algebraic geometry}, volume~36.
\newblock Springer Science \& Business Media, 2013.

\bibitem[Bolte et~al.(2014)Bolte, Sabach, and Teboulle]{bolte2014proximal}
J{\'e}r{\^o}me Bolte, Shoham Sabach, and Marc Teboulle.
\newblock Proximal alternating linearized minimization for nonconvex and
  nonsmooth problems.
\newblock \emph{Mathematical Programming}, 146\penalty0 (1-2):\penalty0
  459--494, 2014.

\bibitem[Borwein and Hamilton(2009)]{borwein2009symbolic}
Jonathan~M Borwein and Chris~H Hamilton.
\newblock Symbolic fenchel conjugation.
\newblock \emph{Mathematical Programming}, 116\penalty0 (1-2):\penalty0 17--35,
  2009.

\bibitem[Brighi and Chipot(1994)]{brighi1994approximated}
Bernard Brighi and Michel Chipot.
\newblock Approximated convex envelope of a function.
\newblock \emph{SIAM journal on numerical analysis}, 31\penalty0 (1):\penalty0
  128--148, 1994.

\bibitem[Br\o~ndsted(1966)]{brondsted1966milman}
Arne Br\o~ndsted.
\newblock Milman's theorem for convex functions.
\newblock \emph{Mathematica Scandinavica}, 19:\penalty0 5--10, 1966.

\bibitem[Cadzow(1988)]{cadzow1988signal}
James~A Cadzow.
\newblock Signal enhancement-a composite property mapping algorithm.
\newblock \emph{IEEE Transactions on Acoustics, Speech, and Signal Processing},
  36\penalty0 (1):\penalty0 49--62, 1988.

\bibitem[Cand{\`e}s et~al.(2006)Cand{\`e}s, Romberg, and Tao]{candes2006robust}
Emmanuel~J Cand{\`e}s, Justin Romberg, and Terence Tao.
\newblock Robust uncertainty principles: Exact signal reconstruction from
  highly incomplete frequency information.
\newblock \emph{Information Theory, IEEE Transactions on}, 52\penalty0
  (2):\penalty0 489--509, 2006.

\bibitem[Carlsson(2018)]{carlsson2018convex}
Marcus Carlsson.
\newblock On convex envelopes and regularization of non-convex functionals
  without moving global minima.
\newblock \emph{arXiv preprint https://arxiv.org/abs/1811.03439}, 2018.

\bibitem[Carlsson et~al.(2018)Carlsson, Gerosa, and Olsson]{ourselves}
Marcus Carlsson, Daniele Gerosa, and Carl Olsson.
\newblock A un-biased approach to compressed sensing.
\newblock \emph{arXiv preprint https://arxiv.org/abs/1806.05283}, 2018.

\bibitem[Combettes and Wajs(2005)]{combettes2005signal}
Patrick~L Combettes and Val{\'e}rie~R Wajs.
\newblock Signal recovery by proximal forward-backward splitting.
\newblock \emph{Multiscale Modeling \& Simulation}, 4\penalty0 (4):\penalty0
  1168--1200, 2005.

\bibitem[Conway(2000)]{conway2000course}
John~B Conway.
\newblock \emph{A course in operator theory}.
\newblock American Mathematical Soc., 2000.

\bibitem[Conway(2013)]{conway2013course}
John~B Conway.
\newblock \emph{A course in functional analysis}, volume~96.
\newblock Springer Science \& Business Media, 2013.

\bibitem[de~S{\'a}(1994)]{de1994exposed}
Eduardo~Marques de~S{\'a}.
\newblock Exposed faces and duality for symmetric and unitarily invariant
  norms.
\newblock \emph{Linear Algebra and its Applications}, 197:\penalty0 429--450,
  1994.

\bibitem[Donoho(2006)]{donoho2006most}
David~L Donoho.
\newblock For most large underdetermined systems of linear equations the
  minimal $\ell^1$-norm solution is also the sparsest solution.
\newblock \emph{Communications on pure and applied mathematics}, 59\penalty0
  (6):\penalty0 797--829, 2006.

\bibitem[Dowson(1978)]{dowson1978spectral}
Henry~R Dowson.
\newblock \emph{Spectral theory of linear operators}, volume~12.
\newblock Academic Pr, 1978.

\bibitem[Dunford and Schwartz(1963)]{dunford1963linear}
Nelson Dunford and Jacob~T Schwartz.
\newblock \emph{Linear operators. Part 2: Spectral theory. Self adjoint
  operators in Hilbert space}.
\newblock Interscience Publishers, 1963.

\bibitem[Gillard and Zhigljavsky(2013)]{gillard2013optimization}
Jonathan Gillard and Anatoly Zhigljavsky.
\newblock Optimization challenges in the structured low rank approximation
  problem.
\newblock \emph{Journal of Global Optimization}, 57\penalty0 (3):\penalty0
  733--751, 2013.

\bibitem[Gong et~al.(2013)Gong, Zhang, Lu, Huang, and Ye]{gong2013general}
Pinghua Gong, Changshui Zhang, Zhaosong Lu, Jianhua Huang, and Jieping Ye.
\newblock A general iterative shrinkage and thresholding algorithm for
  non-convex regularized optimization problems.
\newblock In \emph{ICML (2)}, pages 37--45, 2013.

\bibitem[Grasmair(2010)]{grasmair2010non}
Markus Grasmair.
\newblock Non-convex sparse regularisation.
\newblock \emph{Journal of Mathematical Analysis and Applications},
  365\penalty0 (1):\penalty0 19--28, 2010.

\bibitem[Hare(2009)]{hare2009proximal}
Waren~L Hare.
\newblock A proximal average for nonconvex functions: a proximal stability
  perspective.
\newblock \emph{SIAM Journal on Optimization}, 20\penalty0 (2):\penalty0
  650--666, 2009.

\bibitem[Hare and Sagastiz{\'a}bal(2009)]{hare2009computing}
Warren Hare and Claudia Sagastiz{\'a}bal.
\newblock Computing proximal points of nonconvex functions.
\newblock \emph{Mathematical Programming}, 116\penalty0 (1-2):\penalty0
  221--258, 2009.

\bibitem[Larsson and Olsson(2016)]{larsson2016convex}
Viktor Larsson and Carl Olsson.
\newblock Convex low rank approximation.
\newblock \emph{International Journal of Computer Vision}, pages 1--21, 2016.

\bibitem[Lasry and Lions(1986)]{lasry1986remark}
Jean-Michel Lasry and Pierre-Louis Lions.
\newblock A remark on regularization in hilbert spaces.
\newblock \emph{Israel Journal of Mathematics}, 55\penalty0 (3):\penalty0
  257--266, 1986.

\bibitem[Lewis and Malick(2008)]{lewis2008alternating}
Adrian~S Lewis and J{\'e}r{\^o}me Malick.
\newblock Alternating projections on manifolds.
\newblock \emph{Mathematics of Operations Research}, 33\penalty0 (1):\penalty0
  216--234, 2008.

\bibitem[Lucet(1997)]{lucet}
Yves Lucet.
\newblock {The Legendre-Fenchel Transform and the Convex Hull of a Function:
  Fast Computational Algorithms, Second-Order Smoothness and Analysis}.
\newblock \emph{PhD-thesis; https://people.ok.ubc.ca/ylucet/thesis/1997-PhD
  Yves lucet.pdf}, 1997.

\bibitem[Lucet(2010)]{lucet2010shape}
Yves Lucet.
\newblock What shape is your conjugate? a survey of computational convex
  analysis and its applications.
\newblock \emph{SIAM review}, 52\penalty0 (3):\penalty0 505--542, 2010.

\bibitem[McCormick(1976)]{mccormick1976computability}
Garth~P McCormick.
\newblock Computability of global solutions to factorable nonconvex programs:
  Part i—convex underestimating problems.
\newblock \emph{Mathematical programming}, 10\penalty0 (1):\penalty0 147--175,
  1976.

\bibitem[Meyer and Floudas(2005)]{meyer2005convex}
Clifford~A Meyer and Christodoulos~A Floudas.
\newblock Convex envelopes for edge-concave functions.
\newblock \emph{Mathematical programming}, 103\penalty0 (2):\penalty0 207--224,
  2005.

\bibitem[Ochs et~al.(2015)Ochs, Dosovitskiy, Brox, and
  Pock]{ochs2015iteratively}
Peter Ochs, Alexey Dosovitskiy, Thomas Brox, and Thomas Pock.
\newblock On iteratively reweighted algorithms for nonsmooth nonconvex
  optimization in computer vision.
\newblock \emph{SIAM Journal on Imaging Sciences}, 8\penalty0 (1):\penalty0
  331--372, 2015.

\bibitem[Parikh and Boyd(2014)]{parikh2014proximal}
Neal Parikh and Stephen~P Boyd.
\newblock Proximal algorithms.
\newblock \emph{Foundations and Trends in optimization}, 1\penalty0
  (3):\penalty0 127--239, 2014.

\bibitem[Recht et~al.(2010)Recht, Fazel, and Parrilo]{recht2010guaranteed}
Benjamin Recht, Maryam Fazel, and Pablo~A. Parrilo.
\newblock Guaranteed minimum-rank solutions of linear matrix equations via
  nuclear norm minimization.
\newblock \emph{SIAM Rev.}, 52\penalty0 (3):\penalty0 471--501, August 2010.

\bibitem[Ringrose(1971)]{ringrose}
John~R Ringrose.
\newblock \emph{Compact non-self-adjoint operators}.
\newblock von Nostrand, 1971.

\bibitem[Rockafellar and Wets(2009)]{rockafellar2009variational}
R~Tyrrell Rockafellar and Roger J-B Wets.
\newblock \emph{Variational analysis}, volume 317.
\newblock Springer Science \& Business Media, 2009.

\bibitem[Saab and Y{\i}lmaz(2010)]{saab2010sparse}
Rayan Saab and {\"O}zg{\"u}r Y{\i}lmaz.
\newblock Sparse recovery by non-convex optimization--instance optimality.
\newblock \emph{Applied and Computational Harmonic Analysis}, 29\penalty0
  (1):\penalty0 30--48, 2010.

\bibitem[Schatten(1960 (2013))]{schatten2013norm}
Robert Schatten.
\newblock \emph{Norm ideals of completely continuous operators}, volume~27.
\newblock Springer-Verlag, 1960 (2013).

\bibitem[Selesnick(2017)]{selesnick2017sparse}
Ivan Selesnick.
\newblock Sparse regularization via convex analysis.
\newblock \emph{Transactions}, 1, 2017.

\bibitem[Simon(1979)]{simon1979trace}
Barry Simon.
\newblock \emph{Trace ideals and their applications}, volume~35.
\newblock Cambridge University Press Cambridge, 1979.

\bibitem[Soubies et~al.(2015)Soubies, Blanc-F{\'e}raud, and
  Aubert]{soubies2015continuous}
Emmanuel Soubies, Laure Blanc-F{\'e}raud, and Gilles Aubert.
\newblock A continuous exact $\backslash$ell\_0 penalty (cel0) for least
  squares regularized problem.
\newblock \emph{SIAM Journal on Imaging Sciences}, 8\penalty0 (3):\penalty0
  1607--1639, 2015.

\bibitem[Str{\"o}mberg(1996)]{stromberg1996regularization}
Thomas Str{\"o}mberg.
\newblock On regularization in banach spaces.
\newblock \emph{Arkiv f{\"o}r Matematik}, 34\penalty0 (2):\penalty0 383--406,
  1996.

\bibitem[Tseng(2010)]{tseng2010approximation}
Paul Tseng.
\newblock Approximation accuracy, gradient methods, and error bound for
  structured convex optimization.
\newblock \emph{Mathematical Programming}, 125\penalty0 (2):\penalty0 263--295,
  2010.

\bibitem[Yu et~al.(2015)Yu, Zheng, Marchetti-Bowick, and
  Xing]{yu2015minimizing}
Yaoliang Yu, Xun Zheng, Micol Marchetti-Bowick, and Eric~P Xing.
\newblock Minimizing nonconvex non-separable functions.
\newblock In \emph{AISTATS}, 2015.

\end{thebibliography}

\end{document}